\newcommand{\D}{\mathbb{D}}
\renewcommand{\S}{\mathbb{S}}
\newtheorem{proposition}{Proposition}
\newtheorem{theorem}[proposition]{Theorem}
\newtheorem{definition}[proposition]{Definition}
\newtheorem{lemma}[proposition]{Lemma}
\newtheorem{remark}[proposition]{Remark}
\newtheorem{example}[proposition]{Examples}
\begin{document}

\title{Iso-contact embeddings of manifolds in co-dimension $2$}
\subjclass{Primary: 53D10. Secondary: 53D15, 57R17.}
\date{\today}

\keywords{contact structures, embeddings}
\thanks{}

\author{Dishant M. Pancholi}
\address{Institute of Mathematical Sciences,
IV, Cross Road, CIT Campus, 
Taramani, 
Chennai 600133,
Tamilnadu, India.}
\email{dishant@imsc.res.in}

\author{Suhas Pandit}
\address{Indian Institute of Technology Madras,
IIT PO.Chennai, 600036, Tamilnadu, India.}
\email{suhas@iitm.ac.in}




\begin{abstract}
The purpose of this article is to study co-dimension $2$ iso-contact embeddings of closed contact
manifolds. We first show that a closed contact manifold $(M^{2n-1}, \xi_M)$ iso-contact embeds in 
a contact manifold $(N^{2n+1}, \xi_N),$ provided $M$ contact embeds in $(N, \xi_N)$ with a trivial
normal bundle and the
contact structure induced on $M$ via this embedding is homotopic as an almost-contact structure
to $\xi_M.$ We apply this result to first establish that a closed contact $3$--manifold having no $2$--torsion
in its second integral cohomology iso-contact embeds in the standard contact $5$--sphere if
and only if the first Chern class of the contact structure is zero. Finally, we discuss
iso-contact embeddings of closed simply connected contact $5$--manifolds.

\end{abstract}

\maketitle
\section{Introduction}\label{sec:intro}

The study of embeddings of manifolds in Euclidean spaces has been a very classical and well studied topic 
which has lead to the development of many important tools in geometric topology. 
H.Whitney in \cite{Wh} established that every smooth $n$--manifold admits an embedding in $\mathbb{R}^{2n}.$  He also demonstrated that $\mathbb{R}P^2$ does not admit
an embedding in $\mathbb{R}^3$ there by establishing that this result  
is optimal in general. However,  M. Hirsch generalized the result  for odd dimensional closed orientable manifolds to
establish that every $(2n+1)$-- dimensional manifold admits an embedding in $\mathbb{R}^{4n -1}.$ This, in particular, implies that every closed orientable $3$--manifold admits an embedding in $\mathbb{R}^5.$ On the other hand, J. Nash in \cite{Na} established that every closed Riemannian $n$--manifold admits a $c^1$--isometric embedding in $\frac{n}{2}(3n+11)$--dimensional flat Euclidean space. This initiated
the study of embeddings of manifolds preserving a given geometric structure.  

In this article, we study \emph{iso-contact} embeddings of contact manifolds.
This study was formally initiated by M. Gromov.  See, for example,  \cite{Gr}
for Gromov's approach to the iso-contact embedding problem.

Recall that by a contact structure on a manifold $M$, we mean a nowhere integrable hyperplane field $\xi$ on $M$. 
The contact structure is said to be co-orientable,  provided $\xi$ is the kernel of a $1$--form defined on $M.$
A contact manifold $M$ with the contact structure $\xi$ is denoted by the pair $(M, \xi).$  
When $\xi$ is co-oriented and $\xi$ is the kernel of a $1$--form $\alpha$ defined on $M,$ then we 
also denote the contact manifold $(M, \xi)$ by the pair $(M, Ker \{\alpha \}).$ 
In this article, we will always work with co-orientable contact structures defined on  orientable manifolds. 

 Now, let $(M_1, Ker \{\alpha_1\})$ and $(M_2,  Ker \{\alpha_2 \})$ be two contact manifolds. We say that $(M_1, Ker \{\alpha_1\})$ admits an \emph{iso-contact embedding} in $(M_2, Ker \{\alpha_2 \}),$ 
provided there exists a smooth embedding $f:M_1 \hookrightarrow M_2$ such that $f^*\alpha_2 = g \alpha_1,$ for some 
everywhere positive function $g:M \rightarrow \mathbb{R}.$ In case, a manifold  $M_1$ admits an embedding into a contact manifold $(M_2, \xi_2)$ such that
the restriction of $\xi_2$ to $M_1$ is a contact structure on $M_1,$ we say $M_1$ admits  \emph{contact embedding} in 
$(M_2,\xi_2).$

  Gromov in \cite{Gr} using his convex integration technique -- which generalizes the technique developed by  Nash in \cite{Na} -- essentially provided a  complete understanding regarding iso-contact embeddings when the co-dimension of the embedding is bigger than or equal to $4.$ He established that the question of
contact embeddings in any positive co-dimension and the question of
iso-contact embeddings of closed contact manifold manifolds in co-dimension bigger
than equal to four  abides by the $h$--principle. This
essentially means that whenever there are no formal bundle theoretic  obstructions 
for finding a contact embedding, there is indeed, a contact embedding. 
See \cite{EM} for more on  $h$--principle. 

Our main  focus is  on understanding   iso-contact embeddings
of closed contact  manifolds in co-dimension $2.$ In this  co-dimension 
the techniques developed by Gromov in \cite{Gr} are generally not sufficient
for a complete answer. The systematic study of co-dimension $2$  iso-contact embeddings of closed contact manifolds  was initiated  by J. Etnyre and R. Fukuwara in \cite{EF}.\footnote{In \cite{EF} the term contact embedding means iso-contact embedding. We on the other hand follow the conventions from \cite{EM}.} Iso-contact embeddings of contact $3$--manifolds  in 
$M \times S^2$ were also constructed  in \cite{NP} using the fact that the co-tangent bundle of any closed orientable $3$--manifold is trivial.




For a contact manifold $(M_1, Ker \{ \alpha_1\})$ to admit an iso-contact embedding in the manifold 
$(M_2, Ker \{\alpha_2 \}),$ 
there must exist a smooth embedding $f$ of $M_1$ in $M_2$ and a monomorphism $F:TM_1 \rightarrow TM_2$  which covers
the map $f$ and satisfies the property that the bundle $(F_*\xi_1,F_*d \alpha_1)$ is a conformal symplectic  sub-bundle of 
$(M_2, Ker \{\alpha_2 \}).$  If such a pair $(f,F)$ exists, then we say that we have a formal iso-contact embedding of
$(M_1, Ker \{\alpha_1 \})$ in $(M_2, Ker \{\alpha_2 \}).$ We refer to \cite[Chpt-12]{EM} for more on formal iso-contact
embeddings.

Let $(N, \xi)$ be a contact manifold. Following \cite{EM}, we say that the problem of
iso-contact embeddings of a collection $\mathcal{A}$ of contact manifolds abide by the $h$-principle, provided every formal 
iso-contact embedding
of a contact manifold $(M,\xi_M) \in \mathcal{A}$ in $(N, \xi)$  
can be isotoped to an iso-contact embedding of $(M, \xi_M)$ in $(N, \xi)$. 

Questions related to iso-contact embeddings of closed contact manifolds in an
arbitrary contact manifold $(N, \xi)$ abide by the $h$-principle provided, either the co-dimension of 
the embedding is bigger than $4$ or the target manifold is overtwisted in the sense of \cite{BEM}. See for example,
\cite[Theorem~12.3.1]{EM} to understand the case of iso-contact embeddings when  the co-dimension  of the
embedding is bigger than or equal to $4.$  Iso-contact embeddings were first studied by Gromov. 
He established in  \cite{Gr} the $h$-principle for iso-contact embeddings for the category of open
contact manifolds provided the co-dimension of the embeddings is bigger than or equal to $2$. 
As a result, he also obtained  the  $h$-principle for closed contact manifolds, provided the
co-dimension of the embeddings is at least $4.$
Gromov in \cite{Gr} also established the $h$-principle for co-dimension $2$ immersions. 
To clearly understand  the case of iso-contact embeddings in overtwisted contact manifolds, see  discussions 
related to iso-contact embeddings  in \cite{EF} and \cite{EL}. In particular, in \cite{EL} it is shown
that every closed contact $3$--manifold admits an iso-contact embedding in an overtwisted contact $\S^2 \times \S^3.$
We would also like to point out that A. Mori in \cite{Mr} also produced iso-contact embeddings of all
contact $3$--manifolds in the contact manifold $\left(\mathbb{R}^7, Ker \{dz + \displaystyle\sum_{i=1}^{3} x_i d y_i \}\right)$
using open books and D. Martinez-Torres in \cite{Ma} 
produced an iso-contact embedding of any contact manifold $M^{2n+1}$ in 
$\left(\mathbb{R}^{4n+3}, Ker \{ dz + \displaystyle\sum_{i = 1}^{2n+1} x_i d y_i  \}\right).$

%
In this article, we first  establish an $h$--principle type result for co-dimension $2$  iso-contact embeddings of  
closed manifolds. In order to state this  result, we need the notion of \emph{overtwisted}
contact manifolds due to M. Borman, Y. Eliashberg and E. Murphy discussed in \cite{BEM}.

Recall that a contact manifold $(M, \xi)$ is said to be overtwisted, provided it admits an iso-contact embedding
of an overtwisted ball. For a precise definition of an overtwisted ball,  refer  \cite{BEM}. 
For the purpose of this article what is important is the following fact established in \cite{BEM}:

In every homotopy class of  almost contact structures, there exists a unique overtwisted contact structure up to isotopy.

Here, by an almost-contact structure on a manifold $M,$ we mean a hyperplane-field 
$\xi$ together with a conformal class of a symplectic structure on it.
We would like to remark that a contact structure $Ker\{\alpha\}$  can be  naturally
regarded as   an almost-contact structure. This is because $d \alpha$ restricted to $Ker \{\alpha\}$ provides the conformal class of symplectic structure on the hyperplane $Ker\{\alpha\}.$

We will always  denote the unique overtwisted contact structure in the homotopy class of  an almost contact structure $\xi$ on a manifold $M$ by  $\xi^{ot}.$ Now, we state our main result of this article.

\begin{theorem}\label{thm:h-principle}
 Let $(M, \xi_M)$ be a closed contact manifold. If $(M, \xi_M^{ot})$ admits an iso-contact embedding in a 
 contact manifold $(N, \xi_N)$ with the trivial normal bundle, then so does
  $(M, \xi_M).$ 
\end{theorem}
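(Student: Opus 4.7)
The strategy is to use the given iso-contact embedding of $(M,\xi_M^{ot})$ to detect an overtwisted ball inside the ambient $(N,\xi_N)$, and then to invoke the $h$-principle for iso-contact embeddings into overtwisted contact manifolds recalled in the introduction. Applying the codimension-$2$ contact neighborhood theorem to the given iso-contact embedding $f\colon (M,\xi_M^{ot})\hookrightarrow (N,\xi_N)$, together with triviality of the normal bundle, a tubular neighborhood $U$ of $f(M)$ in $N$ is contactomorphic to the standard product model $(M\times D^2_\delta,\,Ker\{\alpha^{ot}+x\,dy\})$ for some $\delta>0$, where $\alpha^{ot}$ is a contact form for $\xi_M^{ot}$ and $(x,y)$ are Cartesian coordinates on the small disk. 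Along $f(M)$ this yields the splittings $TN|_{f(M)}=TM\oplus \nu$ with $\nu$ the trivial rank-$2$ normal bundle, and $\xi_N|_{f(M)}=\xi_M^{ot}\oplus \nu$.

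Next I would argue that $(N,\xi_N)$ is overtwisted. Since $\xi_M^{ot}$ is overtwisted, $(M,\xi_M^{ot})$ admits an iso-contact embedding of an overtwisted $(2n-1)$-ball $B^{ot}_{2n-1}$; combined with the product model above, this produces an iso-contact embedding of $B^{ot}_{2n-1}\times D^2_\delta$ into $U$. The geometric input I need at this step is a suspension-type fact that $(B^{ot}_{2n-1}\times D^2_\delta,\,Ker\{\alpha^{ot}+x\,dy\})$ is itself an iso-contact copy of the overtwisted $(2n+1)$-ball from \cite{BEM}, so that overtwistedness propagates through the trivial codimension-$2$ normal bundle. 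Granting this, $(N,\xi_N)$ contains an overtwisted ball and is therefore overtwisted.

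I would then construct a formal iso-contact embedding of $(M,\xi_M)$ in $(N,\xi_N)$ whose underlying smooth embedding is $f$. Because $\xi_M$ and $\xi_M^{ot}$ are homotopic as almost-contact structures (this is the defining property of $\xi_M^{ot}$), there is a bundle automorphism $\Phi\colon TM\to TM$, isotopic to the identity, carrying the almost-contact structure $(\xi_M,[d\alpha_M])$ to $(\xi_M^{ot},[d\alpha^{ot}])$. Post-composing $\Phi$ with the inclusion $TM\hookrightarrow TM\oplus \nu$ defines a monomorphism $F\colon TM\to TN$ covering $f$ with $F(\xi_M)=\xi_M^{ot}\subset \xi_N|_{f(M)}$ and with matching conformal symplectic data, so that $(f,F)$ is a formal iso-contact embedding of $(M,\xi_M)$ in $(N,\xi_N)$. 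Since $(N,\xi_N)$ is overtwisted by the previous step, the $h$-principle for iso-contact embeddings of closed contact manifolds into overtwisted contact targets, recalled in the introduction following \cite{BEM}, then deforms $(f,F)$ to a genuine iso-contact embedding, completing the proof.

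The main obstacle I anticipate is the suspension claim in the second paragraph: verifying that the standard product $(B^{ot}_{2n-1}\times D^2_\delta,\,Ker\{\alpha^{ot}+x\,dy\})$ is an overtwisted $(2n+1)$-ball in the sense of \cite{BEM}. This is the essential geometric content of the theorem, as it is exactly the mechanism that transports overtwistedness across a trivial codimension-$2$ normal bundle; once this is established, the rest of the proof is a formal assembly of the contact neighborhood theorem, the almost-contact homotopy, and the available $h$-principle in the overtwisted ambient.
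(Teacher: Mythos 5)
There is a genuine gap, and it sits exactly where you flagged it: the ``suspension-type fact'' is not true at the scale you have available, and the conclusion drawn from it --- that $(N,\xi_N)$ is overtwisted --- is false in general. The contact neighborhood theorem only gives you $M\times \mathbb{D}^2_\delta$ for some \emph{small} $\delta$, and a small product neighborhood of an overtwisted codimension-$2$ submanifold need not contain an overtwisted $(2n+1)$-ball; the dependence of such arguments on the size of the tubular neighborhood is precisely the theme of \cite{NP}, where only \emph{sufficiently large} neighborhoods of an overtwisted submanifold are shown to carry a filling obstruction. In fact your intermediate claim cannot be repaired: the overtwisted sphere $(\S^{2n-1},\xi_{stot})$ embeds iso-contactly with trivial normal bundle in the \emph{tight} sphere $(\S^{2n+1},\xi_{std})$ (Proposition~\ref{pro:ot_sphere_in_stand_sphere}; for $n=2$ see also \cite{Mr, EF}), so ``ambient contains a codimension-$2$ overtwisted contact submanifold with trivial normal bundle'' does not imply ambient overtwistedness. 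Moreover, the whole point of Theorem~\ref{thm:h-principle} as used in this paper (Proposition~\ref{cor:h-principle_for_embedding_in_spheres}, Theorems~\ref{thm:cont_3-fold_in_S^5} and~\ref{thm:simply_connected_5-folds}) is that the target is the standard tight sphere; an argument that first forces $N$ to be overtwisted proves a strictly weaker statement and would be vacuous for these applications. A secondary issue is that even granting overtwistedness of $N$, the codimension-$2$ $h$-principle you invoke is only loosely recalled in the introduction and, in the forms available (e.g.\ the discussions in \cite{EF,EL}), requires care about where the overtwisted ball sits relative to the image; it is not a black box that finishes the proof from your formal data $(f,F)$.

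The paper's actual route never leaves the small neighborhood and never needs $N$ to be overtwisted. It first upgrades the hypothesis: by Propositions~\ref{pro:ot_sphere_in_stand_sphere} and~\ref{pro:embedding_of_contact_sum_with_std_ot} (connect-summing the given embedding with the embedding $(\S^{2n-1},\xi_{stot})\subset(\S^{2n+1},\xi_{std})$ and using $N\#\S^{2n+1}\cong N$, plus uniqueness of overtwisted structures in a homotopy class) one gets an iso-contact embedding of $(M,\xi_M^{stot})$ with trivial normal bundle. The key step is then Proposition~\ref{pro:h-principle_weak}: inside the $\varepsilon$--partial open book forming a small neighborhood of the embedded $(M,\xi_M^{stot})$, the open-book lemmas (Lemmas~\ref{lem:construction_of_isotopy}--\ref{lem:embd_std_sphere_in_nbhd_stot_sphere}, i.e.\ re-embedding mapping tori of pages symplectically near the convex boundary, and band-summing via Proposition~\ref{pro:connect_sum_embedding}) produce an iso-contact embedding of $(M,\xi_M)$ itself; the negative stabilization is absorbed by this explicit construction rather than by an ambient $h$-principle. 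If you want to salvage your approach, you would need either a quantitative statement that the given neighborhood can be enlarged (it cannot, in general), or a genuinely different mechanism for trading $\xi_M^{ot}$ for $\xi_M$ inside a small neighborhood --- which is exactly what Proposition~\ref{pro:h-principle_weak} supplies.
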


Our proof of the Theorem~\ref{thm:h-principle} relies on
certain  \emph{flexibility} discovered in 
iso-contact embeddings of contact manifolds in
neighborhoods of a special class of  closed contact overtwisted manifolds. which are assumed to be embedded in a given  contact manifold. 
See the Proposition~\ref{pro:h-principle_weak} for a precise statement. 

Next, we discuss some  applications of the Theorem~\ref{thm:h-principle}. 
We first discuss co-dimension $2$ iso-contact embeddings of contact manifolds
in the standard contact spheres. 
Recall that by the standard contact structure 
$\xi_{std}$ on the unit sphere $\S^{2n-1} \subset \mathbb{R}^{2n},$ 
we mean the kernel of the $1$-form 
$ \displaystyle\sum_{i =1}^{n} x_i d y_i - y_i d x_i$ restricted to $\S^{2n-1}.$

The techniques developed to establish the Theorem~\ref{thm:h-principle} also
establishes the following very useful proposition.

\begin{proposition}\label{cor:h-principle_for_embedding_in_spheres}
A closed contact manifold $(M^{2n-1}, \xi_M)$ admits an iso-contact embedding in the
standard contact sphere $(\S^{2n+1}, \xi_{std})$ if and only if $M$ admits
a contact embedding   in $(\S^{2n+1},\xi_{std})$ and the induced contact
structure on $M$ by the embedding  is
 homotopic to $\xi_M$ as an almost-contact structure. 
\end{proposition}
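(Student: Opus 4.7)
The forward direction is immediate: an iso-contact embedding of $(M, \xi_M)$ is, in particular, a contact embedding, and the contact structure it induces on $M$ is $\xi_M$ itself, hence trivially homotopic to $\xi_M$ as an almost-contact structure.

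For the reverse direction, suppose $f : M \hookrightarrow (\S^{2n+1}, \xi_{std})$ is a contact embedding whose induced contact structure $\xi_M'$ is homotopic to $\xi_M$ as an almost-contact structure. The plan is to run the argument underlying Theorem~\ref{thm:h-principle} with $(N, \xi_N) = (\S^{2n+1}, \xi_{std})$. First observe that $f$ already provides an iso-contact embedding of $(M, \xi_M')$ with symplectic normal bundle $\nu_\xi$ determined (as a complex line bundle) by the splitting $\xi_{std}|_{f(M)} = \xi_M' \oplus \nu_\xi$; since $c_1(\xi_{std}) \in H^2(\S^{2n+1}; \Z) = 0$, one has $c_1(\nu_\xi) = -c_1(\xi_M)$. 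By the uniqueness theorem of Borman--Eliashberg--Murphy \cite{BEM}, the common almost-contact homotopy class of $\xi_M$ and $\xi_M'$ contains a unique overtwisted contact structure $\xi_M^{ot}$ up to isotopy.

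The main step is to upgrade $f$ to an iso-contact embedding of $(M, \xi_M^{ot})$ in $(\S^{2n+1}, \xi_{std})$ whose normal bundle matches the required hypothesis of Theorem~\ref{thm:h-principle}. This is done by performing a local overtwisted modification of $f$ inside a Darboux ball of the standard sphere: one inserts an overtwisted disc in a chart disjoint from (or adjacent to) $f(M)$, and uses the flexibility of iso-contact embeddings near overtwisted submanifolds isolated in Proposition~\ref{pro:h-principle_weak} to convert the iso-contact embedding of $(M, \xi_M')$ into one of $(M, \xi_M^{ot})$ without altering its almost-contact type. The abundance of Darboux balls in $(\S^{2n+1}, \xi_{std})$ and the vanishing $H^2(\S^{2n+1}; \Z) = 0$ are used here to arrange the normal framing appropriately. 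Once this iso-contact embedding of $(M, \xi_M^{ot})$ is in hand, Theorem~\ref{thm:h-principle} applied with $(N, \xi_N) = (\S^{2n+1}, \xi_{std})$ directly yields an iso-contact embedding of $(M, \xi_M)$ into the standard contact sphere.

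The principal obstacle is the middle step: producing the iso-contact embedding of the overtwisted representative $(M, \xi_M^{ot})$ from the original contact embedding $f$ in a manner compatible with the normal bundle data required by Theorem~\ref{thm:h-principle}. This requires combining the uniqueness statement of \cite{BEM} with a careful local surgery in a Darboux chart, and is precisely where the ``flexibility near overtwisted manifolds'' encapsulated in Proposition~\ref{pro:h-principle_weak} is used; once performed, the remainder of the argument is a direct invocation of Theorem~\ref{thm:h-principle}.
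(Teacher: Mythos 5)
Your forward direction and your overall strategy (reduce to embedding an overtwisted representative and then invoke the flexibility results) match the paper's intent, but the middle step of your reverse direction has a genuine gap. You propose to upgrade the given contact embedding $f$, which induces $\xi_M'$, to an iso-contact embedding of $(M,\xi_M^{ot})$ by ``inserting an overtwisted disc'' in a Darboux chart disjoint from (or adjacent to) $f(M)$. This cannot work as described: the ambient manifold is the \emph{fixed} tight sphere $(\S^{2n+1},\xi_{std})$, so you are not allowed to modify the ambient contact structure, and any modification supported in a chart disjoint from $f(M)$ leaves the induced structure $\xi_M'$ on $M$ unchanged. Moreover, Proposition~\ref{pro:h-principle_weak} does not provide the flexibility you attribute to it; it goes in the opposite direction, converting an iso-contact embedding of $(M,\xi^{stot})$ into one of $(M,\xi)$, and it cannot be used to make the induced structure on an already embedded $M$ overtwisted.

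The mechanism the paper actually uses, and which your proposal is missing, is global rather than local: by Proposition~\ref{pro:ot_sphere_in_stand_sphere} the overtwisted sphere $(\S^{2n-1},\xi_{stot})$ iso-contact embeds in $(\S^{2n+1},\xi_{std})$, and by the open-book connected-sum embedding Proposition~\ref{pro:connect_sum_embedding} one may form the connected sum of the embedded $(M,\xi_M')$ with this embedded overtwisted sphere inside $\S^{2n+1}\#\,\S^{2n+1}=\S^{2n+1}$, yielding an iso-contact embedding of $(M,\xi_M'\#\xi_{stot})$. Since $\xi_M'$ and $\xi_M$ are homotopic as almost-contact structures, $\xi_M'\#\xi_{stot}$ and $\xi_M\#\xi_{stot}=\xi_M^{stot}$ are homotopic overtwisted structures, hence isotopic by the uniqueness theorem of \cite{BEM}; only then does Proposition~\ref{pro:h-principle_weak} (with trivial normal bundle, which follows from the vanishing of the Euler class of the normal bundle of a codimension-two submanifold of a sphere) produce the desired embedding of $(M,\xi_M)$. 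Your appeal to $c_1(\nu_\xi)=-c_1(\xi_M)$ is tangential to this, and without the two embedding propositions above the overtwisted representative never gets embedded, so Theorem~\ref{thm:h-principle} cannot be invoked.
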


There are many interesting classes of smooth manifolds  which admit smooth co-dimension $2$ embeddings 
in the standard spheres. 
For example, as mentioned earlier, Hirsch in \cite{Hi} showed that every closed smooth $3$--manifold admits a smooth embedding in $\S^5.$  
There are now many proofs of this result. See for example, \cite{HLM} for
what is now known as braided embedding  and \cite{PPS} for embeddings using open books.

N. Kasuya in \cite{Ka} first observed that not all contact $3$--manifolds admit iso-contact embeddings in 
the standard contact $\S^5.$ He showed that the necessary condition for the existence of such an embedding
is that the first Chern class of the contact structure must be zero. In \cite{Ka}, Kasuya also
established that every closed contact $3$--manifold $(M, \xi)$ admits an iso-contact embedding in
some contact $\mathbb{R}^5.$

 In \cite{EF}, Etnyre and Fukuwara obtained many interesting iso-contact embedding
results. One of the most striking  result which they established  states that every overtwisted contact $3$--manifold $(M, \xi_{ot})$ with no $2$--torsion in second integral cohomology iso-contact embeds in 
the standard contact $\S^5$ if and only if the first Chern class of the overtwisted contact structure 
$\xi_{ot}$ is zero. 

Applying the Theorem~\ref{thm:h-principle} about  iso-contact embeddings in spheres 
and a result about iso-contact embeddings of overtwisted $3$--manifolds in $\S^5$ proved in  \cite{EF}, we 
establish the following:

\begin{theorem}\label{thm:cont_3-fold_in_S^5}
 Let $M$ be a closed orientable $3$--manifold. Then, we have the following:
 \begin{enumerate}
 \item 
 In case, $M$ has no $2$--torsion in $H^2(M, \mathbb{Z}),$ then $M$ together with 
 any contact structure $\xi$ on it admits an  iso-contact embedding  in $(\S^5, \xi_{std})$ 
 if and only if the first Chern class $c_1(\xi)$ is zero. 
 \item 
 In case, $M$ has a $2$--torsion in $H^2(M, \mathbb{Z}),$ then
 there exits a homotopy class $[\xi]$ of plane fields  on $M$ such that $M$ together with
 any contact structure homotopic to a plane field belonging to the class $[\xi]$ 
 over a $2$--skeleton of $M$ admits an
 iso-contact embedding in $(\S^5, \xi_{std}).$
\end{enumerate}
\end{theorem}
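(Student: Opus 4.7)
The plan is to reduce both parts of Theorem~\ref{thm:cont_3-fold_in_S^5} to the iso-contact embedding results of Etnyre--Fukuwara \cite{EF} for overtwisted contact $3$--manifolds, and then invoke Theorem~\ref{thm:h-principle} to pass from the overtwisted case to arbitrary contact structures in the relevant homotopy class. A convenient preliminary observation makes Theorem~\ref{thm:h-principle} immediately applicable in this setting: since every closed orientable $3$--manifold is parallelizable and $\S^5$ is stably parallelizable, the rank-$2$ normal bundle $\nu$ of any embedding $M\hookrightarrow\S^5$ is stably trivial; as a stably trivial rank-$2$ bundle over a $3$--complex is genuinely trivial, the trivial-normal-bundle hypothesis of Theorem~\ref{thm:h-principle} is automatic here.

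For the necessity direction of $(1)$, an iso-contact embedding $(M,\xi)\hookrightarrow(\S^5,\xi_{std})$ yields a complex splitting $\xi_{std}|_M \cong \xi \oplus \nu$. Since $c_1(\xi_{std})=0$ (as $H^2(\S^5,\Z)=0$) and $c_1(\nu)=0$ by the preceding observation, we deduce $c_1(\xi)=0$; this recovers Kasuya's obstruction \cite{Ka}. For sufficiency, given $\xi$ with $c_1(\xi)=0$, let $\xi^{ot}$ denote the overtwisted contact structure on $M$ homotopic to $\xi$ as an almost-contact structure; then $c_1(\xi^{ot})=c_1(\xi)=0$. Since $M$ has no $2$--torsion in $H^2(M,\Z)$, the Etnyre--Fukuwara theorem produces an iso-contact embedding of $(M,\xi^{ot})$ in $(\S^5,\xi_{std})$, and Theorem~\ref{thm:h-principle} promotes it to an iso-contact embedding of $(M,\xi)$.

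For $(2)$, I would start from a smooth embedding $M\hookrightarrow\S^5$ provided by Hirsch \cite{Hi}, perturb it to a contact embedding, and call $\xi_0$ the induced contact structure on $M$. Let $[\xi]$ be the homotopy class of $\xi_0$ as a plane field over the $2$--skeleton of $M$. For any contact structure $\xi'$ on $M$ whose $2$--skeletal class equals $[\xi]$, the overtwisted representative $(\xi')^{ot}$ agrees with $\xi_0^{ot}$ on the $2$--skeleton and differs from it only in the $3$--dimensional (Hopf / $d_3$) invariant. The refined Etnyre--Fukuwara construction for overtwisted $3$--manifolds --- applicable in the presence of $2$--torsion once an ``embeddable'' $2$--skeletal class has been fixed --- provides an iso-contact embedding of $(M,(\xi')^{ot})$ in $(\S^5,\xi_{std})$, whose normal bundle is automatically trivial, and Theorem~\ref{thm:h-principle} delivers the iso-contact embedding of $(M,\xi')$.

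The principal obstacle lies in $(2)$: identifying a single $2$--skeletal class $[\xi]$ such that every overtwisted contact structure extending $[\xi]$ iso-contact embeds in $(\S^5,\xi_{std})$. When $H^2(M,\Z)$ has no $2$--torsion this comes for free from $c_1(\xi^{ot})=0$, but with $2$--torsion present the true obstruction is the finer $\mathrm{Spin}^c$-structural data, and one must use extra input --- either an explicit construction (e.g. open-book or branched-cover based) realizing every $3$--dimensional invariant extending $[\xi]$, or iso-contact local modifications inside $(\S^5,\xi_{std})$ that adjust the Hopf invariant of the embedded overtwisted structure --- to close the argument, before Theorem~\ref{thm:h-principle} takes over.
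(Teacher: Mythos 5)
Your overall strategy---reduce to the overtwisted case and then invoke Theorem~\ref{thm:h-principle}---is the same as the paper's, and your part (1) is essentially the paper's argument (necessity via Kasuya's obstruction \cite{Ka}, sufficiency by quoting the Etnyre--Fukuwara theorem for overtwisted structures in the torsion-free case and then applying Theorem~\ref{thm:h-principle}). Two smaller points there. First, your justification of normal-bundle triviality is incorrect as stated: a stably trivial oriented rank-$2$ bundle over a $3$--complex need not be trivial (pull back $T\S^2$ to $\S^2\times\S^1$); what you actually need, and what the paper uses, is that the Euler class of the normal bundle of a closed orientable submanifold of a sphere vanishes, which does force triviality because oriented rank-$2$ bundles are classified by their Euler class. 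Second, ``perturb a Hirsch embedding to a contact embedding'' is not a genericity statement; the paper instead produces a contact embedding via the branched-covering construction over $(\S^3,\xi_{std})$ (Theorem~\ref{thm:Etnyre_Fukuwara}, based on \cite{HLM}), where the only generic input is transversality of the branch locus to $\xi_{std}$.

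The genuine gap is in part (2), and you flag it yourself: you appeal to a ``refined Etnyre--Fukuwara construction applicable in the presence of $2$--torsion once an embeddable $2$--skeletal class has been fixed,'' but no such statement is available off the shelf, and without it your argument does not close. The paper fills exactly this hole with a connected-sum argument. Let $\eta$ be the contact structure induced on $M$ by the branched-cover embedding, so $c_1(\eta)=0$ by Kasuya's obstruction. By Gompf's analysis of homotopy classes of plane fields \cite[Theorem 4.5]{Go}, the additivity of the $d_3$--invariant under connected sum, and the uniqueness of overtwisted structures in a homotopy class, every overtwisted contact structure on $M$ that agrees with $\eta$ over the $2$--skeleton is isotopic to $\eta\#\zeta$ for a suitable overtwisted contact structure $\zeta$ on $\S^3$. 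Every contact $\S^3$ iso-contact embeds in $(\S^5,\xi_{std})$ by \cite{EF}, so the connected-sum embedding statement (Proposition~\ref{pro:connect_sum_embedding}, with $\S^5\#\S^5=\S^5$) yields an iso-contact embedding of $(M,\eta\#\zeta)$ for every such $\zeta$; Theorem~\ref{thm:h-principle} then gives the embedding of every contact structure homotopic to $\eta$ over the $2$--skeleton, with $[\xi]=[\eta]$ the advertised class. This step---adjusting the $3$--dimensional invariant by summing with embedded overtwisted spheres---is the missing idea in your write-up, and it is also how the paper handles the torsion-free case without needing the full strength of the Etnyre--Fukuwara theorem.
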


Finally, we discuss iso-contact embeddings of simply-connected contact $5$--manifolds in $(\S^7, \xi_{std}).$ 
In particular, we establish:

\begin{theorem}\label{thm:simply_connected_5-folds}
Let $(M,\xi)$ be a closed simply connected contact $5$--manifold with $w_2(M) = 0$. 
Then, $(M,\xi)$ admits an  iso-contact embedding in $(\S^7, \xi_{std})$ if and
only if $c_1(\xi) = 0.$
\end{theorem}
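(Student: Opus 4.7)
Suppose that $(M,\xi)$ admits an iso-contact embedding $f:M \hookrightarrow (\S^7,\xi_{std})$ with smooth normal bundle $\nu$. The iso-contact condition identifies $\nu$ with the conformal symplectic normal bundle and yields a complex bundle splitting $\xi_{std}|_M \cong \xi \oplus \nu$. Because $H^2(\S^7;\Z)=0$ and $H^{2k}(M;\Z)=0$ for $k=2,3$ (from simple connectivity of the $5$-manifold $M$), every Chern class of $\xi_{std}|_M$ vanishes; being a complex rank $3$ bundle over the $5$-complex $M$, it lies in the stable range and is therefore trivial, forcing $c_1(\xi) = -c_1(\nu)$. The remaining step is to show $c_1(\nu)=0$. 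The hypotheses force $TM$ to be trivial by obstruction theory: $w_2(M) = 0$ kills the primary obstruction, the next obstruction sits in $H^4(M;\Z) = 0$, and the top obstruction $w_5(M) = \chi(M) \bmod 2$ vanishes because the Euler characteristic of a closed odd-dimensional manifold is zero. Combined with parallelizability of $\S^7$ and $TM \oplus \nu \cong T\S^7|_M$, this makes $\nu$ stably trivial. To upgrade stable triviality to $c_1(\nu) = 0$, the plan is to adapt the arguments of Kasuya in \cite{Ka} and Etnyre--Fukuwara in \cite{EF} for the $3$-dimensional analogue, exploiting the specific realization of $\nu$ as a complex line sub-bundle of the trivial complex bundle $\xi_{std}|_M$ (rather than as the normal bundle of an arbitrary smooth embedding).

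\textbf{Sufficiency.} Assume $c_1(\xi) = 0$. The plan is to apply Proposition~\ref{cor:h-principle_for_embedding_in_spheres}: it suffices to exhibit a contact embedding of $M$ in $(\S^7,\xi_{std})$ whose induced contact structure $\xi_0$ is homotopic to $\xi$ as an almost contact structure. Using Barden's classification of simply connected closed spin $5$-manifolds, $M$ decomposes as a connected sum of $\S^2 \times \S^3$ summands and simply connected spin Barden manifolds, each of which admits an explicit smooth embedding in $\S^7$ as the boundary of a parallelizable $6$-manifold in a disc neighborhood of $\S^7$, yielding a smooth embedding of $M$ with trivial normal bundle. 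A $C^\infty$-small perturbation makes the embedding contact---the condition that $f^*\alpha_{std}$ be a contact form is open and generic in codimension $2$---producing an induced contact structure $\xi_0$ with $c_1(\xi_0) = -c_1(\nu) = 0$. Since $M$ is simply connected with $w_2(M) = 0$, the homotopy classes of almost contact structures on $M$ are classified by their first Chern class in $H^2(M;\Z)$ (one views an almost contact structure as a section of a $\CP^3$-bundle, and the higher obstructions vanish because $\pi_3(\CP^3) = \pi_5(\CP^3) = 0$ and $H^4(M;\Z)=0$); hence $\xi_0$ and $\xi$ are homotopic as almost contact structures, and Proposition~\ref{cor:h-principle_for_embedding_in_spheres} delivers the desired iso-contact embedding.

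\textbf{Main obstacle.} The hardest step is the necessity claim $c_1(\nu)=0$, because stable triviality of a rank $2$ oriented bundle over a simply connected closed $5$-manifold is strictly weaker than triviality; in general it only forces the Euler class to be even. Overcoming this requires genuinely using contact-topological data---namely the realization of $\nu$ as a complex sub-line-bundle of the trivial complex bundle $\xi_{std}|_M$ determined by the embedding---rather than relying on purely smooth-bundle-theoretic arguments. On the sufficiency side, the main bookkeeping is verifying that the handle-theoretic embeddings of Barden's building blocks can be perturbed into contact embeddings without altering the almost contact class of the induced structure, which reduces to a standard transversality plus obstruction-theoretic calculation.
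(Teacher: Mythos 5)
The decisive gap is in your sufficiency argument, at the sentence claiming that a $C^\infty$-small perturbation makes a smooth codimension~$2$ embedding contact because the condition ``$f^*\alpha_{std}$ is a contact form'' is ``open and generic.'' The condition is open but it is emphatically not dense in codimension~$2$: for a generic embedding of $M^5$ into $(\S^7,\xi_{std})$ the induced $1$--form degenerates along a hypersurface in $M$, and removing this degeneracy by a small perturbation is precisely the (hard, in general unsolved) codimension--$2$ embedding problem that this paper and \cite{EF}, \cite{Ka} are about. If genericity were true, Theorem~\ref{thm:h-principle} and all the open book machinery would be superfluous, and every closed orientable $(2n-1)$--manifold would contact embed in $(\S^{2n+1},\xi_{std})$ by general position, which is not known and not how any of the existing embedding results are proved. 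The paper avoids this by constructing honest contact embeddings of each Barden prime summand: $\S^2\times\S^3$ is the contact abstract open book $\mathcal{A}ob(\mathcal{D}T^*\S^2,\mathrm{id})$, which open book embeds into $\mathcal{A}ob(D^6,\mathrm{id})\cong(\S^7,\xi_{std})$ via a symplectic embedding of $\mathcal{D}T^*\S^2$ into the standard $6$--disc (Lemma~\ref{lem:embedding_S^2_times_S^3}); the prime pieces $M_k$ are Brieskorn $5$--manifolds and contact embed in $(\S^7,\xi_{std})$ by \cite{Ko}; and Proposition~\ref{pro:connect_sum_embedding} then embeds the connected sum. Once some contact embedding with $c_1$ of the induced structure zero is in hand, the rest of your argument (the Geiges--Hamilton fact that almost contact structures on simply connected $5$--manifolds are classified by $c_1$, followed by Proposition~\ref{cor:h-principle_for_embedding_in_spheres}) agrees with the paper.

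On necessity, the step you single out as the main obstacle is actually immediate, and your detour through stable triviality of $TM$ and $\nu$ is not needed. For any closed oriented codimension--$2$ submanifold of a sphere the Euler class of the normal bundle vanishes (it is the restriction to $M$ of the Poincar\'e dual of $[M]$ in $\S^7$, which lives in $H^2(\S^7;\Z)=0$); since $\nu$ is an oriented rank~$2$, i.e. complex line, bundle, this says exactly $c_1(\nu)=0$, and your splitting $\xi_{std}|_M\cong\xi\oplus\nu$ with $c_1(\xi_{std}|_M)=0$ then gives $c_1(\xi)=0$. This is the same Euler class observation the paper uses in the proof of Proposition~\ref{cor:h-principle_for_embedding_in_spheres}; the paper itself simply quotes Kasuya \cite{Ka} for the necessity of $c_1(\xi)=0$. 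So the necessity half of your proposal is repairable in one line, but the sufficiency half as written rests on a false genericity claim and needs the explicit contact embeddings of the Barden pieces (or some substitute construction) to go through.
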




\section*{Acknowledgment} 

The first author is extremely grateful to Yakov Eliashberg for 
asking various questions regarding iso-contact embeddings which stimulated this work. During 
the course of the development of this article, he provided us  constant encouragement
and support. This work would not have been possible without his critical comments and suggestions. 
We are also thankful to Roger Casals and John Etnyre  for various helpful comments 
and suggestions. We are extremely grateful to Francisco Presas for helping us
 improving the presentation of this  article. 
The first author is also thankful to Simons Foundation for providing
support to travel to Stanford, where a part of work of this project was carried out. The first author
is  thankful to ICTP, Trieste, Italy and Simons Associateship program without which this work
would not have been possible.


\section {Preliminaries}\label{sec:prelim}

In this section, we quickly review notions necessary for the article pertaining 
open books, contact structures and relationship between  them.

\subsection{Open books}
\mbox{}

Let us review few results related to open book decomposition of manifolds.
We first recall the following:

\begin{definition}[Open book decomposition] 
An open book decomposition of a closed oriented  manifold $M$ consists of a co-dimension $2$ oriented 
sub-manifold $B$ with a trivial normal bundle in $M$ and a  locally trivial fibration
$ \pi: M \setminus B \rightarrow \S^1$ such that $\pi^{-1}(\theta)$ is an interior of a
co-dimension $1$ sub-manifold $N_{\theta}$ and $\partial N_{\theta}  = B$, for all $\theta \in \S^1.$
Furthermore, the  normal bundle $\mathcal{N}(B)$ of the sub-manifold $B$ is trivialized 
such that $\pi$ restricted
to $\mathcal{N}(B) \setminus B \rightarrow \S^1$ is given by the angular co-ordinate in $\mathbb{D}^2$--factor. 
\end{definition}

The sub-manifold $B$ is called the  \emph{binding} and  $N_{\theta}$ is called a \emph{page} 
of the open book. We denote the open book decomposition of $M$ by $(M, \mathcal{O}b(B, \pi))$ or sometimes
simply by $\mathcal{O}b(B, \pi).$

Next, we discuss the notion of an \emph{abstract open book decomposition}. To begin with, let us recall
that  the \emph{mapping class group} of a manifold $(\Sigma, \partial \Sigma)$ is the 
group of isotopy classes of diffeomorphisms of $\Sigma$ which are the identity near the boundary 
$\partial \Sigma$. 

\begin{definition}[Mapping torus] Let $\Sigma$ be a manifold with non-empty boundary $\partial \Sigma$. 
Let $\phi$ be an element of the mapping class group of $\Sigma$. By the mapping torus 
$\mathcal{MT}((\Sigma, \partial \Sigma), \phi),$ we mean

$$ \Sigma \times [0,1] / \sim, $$  

where $\sim$  is the equivalence relation identifying $(x, 0)$ with $(\phi(x), 1).$
\end{definition}

Observe that by the definition of $\mathcal{M}T((\Sigma,\partial \Sigma), \phi)),$  there exists a
collar of the boundary $\partial \mathcal{M}T((\Sigma, \partial \Sigma), \phi)$ in 
$\mathcal{M}T((\Sigma, \partial \Sigma), \phi)$ which can be
identified with $(-\epsilon, 0] \times \partial \Sigma \times \S^1$ as the diffeomorphism $\phi$ is 
the identity in a collar $(-\epsilon, 0] \times \partial \Sigma$ of the boundary of $\Sigma.$
We will sometimes denote the mapping torus $\mathcal{M}T((\Sigma, \partial \Sigma), \phi)$ just 
by $\mathcal{M}(\Sigma, \phi).$
We are now in a position to define an abstract open book decomposition.

\begin{definition}[Abstract open book ]
Let $\Sigma$ and $\phi$ as in the previous definition. An abstract open book decomposition of 
$M$ is pair $(\Sigma, \phi)$ such that $M$ is diffeomorphic to 

$$\mathcal{MT}(\Sigma, \phi) \cup_{id} \partial \Sigma \times \mathbb{D}^2, $$

\noindent where $id$ denotes the identity mapping of $\partial \Sigma \times \S^1$

\end{definition}

The map $\phi$ is called the \emph{monodromy} of the open book. We will denote an abstract open book 
decomposition by  $\mathcal{A}ob(\Sigma, \phi).$ Note that
the mapping class $\phi$ uniquely determines $M = \mathcal{A}ob(\Sigma, \phi)$ up to diffeomorphism.

One can easily see that an abstract open book decomposition of $M$ gives an open book decomposition of $M$
up to diffeomorphism and vice versa. Hence, sometimes we will not distinguish between open books and
abstract open books. In particular, we will continue to use the notation $\mathcal{A}ob(\Sigma, \phi)$ to
denote the open book decomposition associated to the abstract open book $\mathcal{A}ob(\Sigma, \phi).$

\begin{example}\label{rmk:standard_ob}
\begin{enumerate}
\item
Notice that $\mathbb{S}^n$ admits an open book decomposition with pages $\mathbb{D}^{n-1}$ and the monodromy the identity
map of $\mathbb{D}^{n-1}.$ We call this open book the \emph{trivial open book}
of $\S^n.$
For more details regarding open books, refer the 
lecture notes \cite{Et} and \cite[Chpt-4.4.2]{Gi}.

\item The manifold $\mathbb{S}^3 \times \mathbb{S}^2$ admits an open book decomposition with pages disk co-tangent bundle  $\mathcal{D}T^*\mathbb{S}^2$ and  
monodromy the identity.
We call this open book decomposition of $\S^3 \times \S^2$  the standard open book decomposition of
$\mathbb{S}^3 \times \mathbb{S}^2.$

\item In \cite{Al}, it was shown that every closed orientable $3$--manifold admits an open book decomposition. 
This result was further generalized to all odd dimensional closed orientable manifold of dimension bigger than $5$ by Quinn in  \cite{Qu}.

\end{enumerate}

\end{example}

Given two abstract open books $M_1^n = \mathcal{A}ob(\Sigma_1, \phi_1)$ and $M_2^n = \mathcal{A}ob(\Sigma_2, \phi_2),$ if we make the band connected sum of pages of 
$\mathcal{A}ob(\Sigma_1, \phi_1)$ and $\mathcal{A}ob(\Sigma_1,
\phi_2),$ then we get an abstract open book decomposition $\mathcal{A}ob(\Sigma_1 \#_b \Sigma_2, \phi_1 \# \phi_2)$ of
$M_1 \# M_2.$ This was first established in  \cite{Ga} by D.Gabai for $3$--manifolds. 

There exists an intimate connection between open books and contact structures on manifolds
discovered by E.Giroux in \cite{Gi}. 
In order to understand this correspondence, we first recall the notion of a contact structure
supported by an open book.

\subsection{Contact manifolds and supporting open books}\label{subsec:open_book_and_contact_structure}
\mbox{}

Giroux in \cite{Gi} introduced the notion of a contact structure supported by an open book.  
We  now recall this notion.

\begin{definition}[Open book supporting a contact form]
 Let $(M, Ker \{ \alpha \})$ be a contact manifold. 
 We say that an open book decomposition $\mathcal{O}b(B, \pi)$ supports
  a contact form $\alpha$ provided:
 \begin{enumerate}
\item The binding $B$ is a contact sub-manifold of $M.$

\item $d \alpha$ is a symplectic form on each page of the open book.

\item The boundary orientation on $B$ coming from the orientation of the pages induced by $(d \alpha)^n$ is
the same as the orientation given by $\alpha|_{B}\wedge (d \alpha|_B)^{n-1}.$ 
\end{enumerate}
\end{definition}

We would like to remark that if $\alpha_1$ and $\alpha_2$ are two contact forms on a contact manifold $M$ 
which are supported by the same open book $\mathcal{O}b(B, \pi),$ then they are isotopic as contact structures.
See, for example, \cite{Ko}.

Let $(M, \xi)$ be a contact manifold. We say that $\xi$ is supported by an open book decomposition $\mathcal{O}b(B,\pi)$ of
$M$ provided there exists a contact $1$--form $\alpha$ inducing the contact structure $\xi$ on $M$ such that
$\alpha$ is supported by $\mathcal{O}b(B, \pi).$

Giroux in \cite{Gi} established a one to one correspondence between the open books up to 
\emph{positive stabilizations} and the  supported contact structures up to isotopy for closed orientable
$3$--manifolds. See the notes \cite{Ko1} by O. van Koert 
and \cite{Et} for more on this. The purpose of the next subsection is to recall a few notions 
and results associated to Giroux's correspondence.

\subsection{Contact abstract open book and Giroux's correspondence}\label{subsec:Giroux_correspondence}
\mbox{}

We begin this subsection by  recalling  the notion of the
\emph{Generalized Dehn twist}. This notion is necessary to understand the notion of
positive stabilization. This notion was first introduced in \cite{Se1}. See also \cite{Se2}.

\begin{definition}[Generalized Dehn twist]

Consider,

$$T^*\mathbb{S}^n = \{(x,y) \in \mathbb{R}^{n+1} \times \mathbb{R}^{n+1} | \hspace{0.2cm} x.y = 0,  ||y|| = 1  \}.$$

Define a diffeomorphism $\tau$ of $T^*\mathbb{S}^n$ as follows:

\begin{equation*}
 \tau(x,y) =  \begin{pmatrix}
    \cos g (y)& |y|^{-1} \sin g(y) \\
    -|y| \sin g (y)&\cos g(y)
  \end{pmatrix}
  \begin{pmatrix}
    x\\
    y
  \end{pmatrix}
\end{equation*}

where, $g$ is a function of $y$ which is the identity near $0$ and is zero outside a compact set containing $0.$
The diffeomorphism $\tau$ is called the Generalized Dehn twist while
$\tau^{-1}$ is called the negative generalized Dehn twist.

\end{definition}

It is relatively easy to check that $\tau$ is a compactly 
supported symplectomorphism of $T^*\mathbb{S}^n.$ Furthermore,  $\tau$ can be isotoped to 
a symplectomorphism which is compactly supported in an arbitrary small neighborhood of the zero section of 
$T^*\mathbb{S}^n.$ This, in particular, implies that $\tau$ and $\tau^{-1}$ 
can be regarded as diffeomorphisms of the disk co-tangent bundle $\mathcal{D}T^*\S^n.$ We refer to \cite[page-186]{MS} and the notes~\cite{Ko} for more details. See also \cite{KN} for a nice
exposition on how to produce a compactly supported Generalized Dehn twist.

Next, we discuss the notion of a \emph{contact abstract open book.} We refer to \cite[Section--2]{Ko} for a
more detailed description of this. 

Let $(\Sigma, d \lambda)$ be a Weinstein manifold and $\phi$  an exact  symplectomorphism
of $\Sigma$ which is the identity near the boundary of $\Sigma.$ Giroux  generalized the construction 
of W. Thurston and H. Winkelnkemper given in \cite{TW} to produce a contact form on the manifold with
open book
$\mathcal{A}ob(\Sigma, \phi)$ such that the contact form is  supported by the   
open book $\mathcal{A}ob(\Sigma, \phi)$ in the sense explained in 
Subsection~\ref{subsec:open_book_and_contact_structure}. 
We will generally denote this contact form by $\alpha_{(\Sigma, \phi)}.$
See  lecture notes by O. van Koert \cite{Ko} for the details of this
construction. See also \cite{GM}.
We call this contact manifold a  \emph{contact abstract open book}. 

In this article, unless stated otherwise,
whenever we talk of a contact structure $\xi$, supported by an abstract open book $\mathcal{A}ob(\Sigma, \phi),$
we will always mean that $\Sigma$ is a Weinstein manifold and $\phi$  an exact symplectomorphism of $\Sigma$
which when restricted to a collar of its boundary is the identity and the contact structure $\xi$ is contactomorphic to $Ker \{\alpha_{(\Sigma, \phi)}\}$ described
earlier.

\begin{example}\label{exm:examples_of_ob}\mbox{}

\begin{enumerate}
\item
Let $\mathbb{D}^{2n}$ denote the unit $2n$--disk in $\mathbb{R}^{2n}$. Let
$\lambda_{std}$ denote the canonical $1$--form on $\mathbb{D}^{2n}$ given by $\displaystyle\sum_{i=1}^n x_i dy_i
-y_i dx_i$ which induces the standard symplectic structure on $\mathbb{D}^{2n}.$ The standard contact
sphere $(\mathbb{S}^{2n+1}, \xi_{std})$ is contactomorphic to the open 
book $(\mathcal{A}ob(\mathbb{D}^{2n}, id), Ker \{\alpha_{(\mathbb{D}^{2n},id)}\}).$ 

\item Consider $\mathcal{D}T^*\mathbb{S}^n$, the unit disk bundle associated to the co-tangent bundle of $\S^n$ and 
the Generalized Dehn twist $\tau$ on $\mathcal{D}T^*\mathbb{S}^n$.
 It is well known that the contact abstract open book 
 $\mathcal{D}\mathcal{A}ob(T^*\mathbb{S}^n, \tau)$
is contactomorphic to the standard contact sphere $(\mathbb{S}^{2n+1}, \xi_{std}).$

\item The contact abstract open book
$\mathcal{A}ob(\mathcal{D}T^*\mathbb{S}^n, \tau^{-1})$ induces an overtwisted contact structure on $\S^{2n+1}.$ 
This is clearly discussed for  $3$--manifolds in \cite{KN1}. In general, this follows
from \cite{CMP}.  We will denote this overtwisted contact structure by $\xi_{stot}.$ 
We will denote a contact $1$--form inducing the contact structure $\xi_{stot}$ by $\alpha_{stot}.$
\end{enumerate}

\end{example}

We now define the notion of  a  \emph{generalized contact abstract open book}:

\begin{definition}[Generalized contact abstract open book]\label{def:generalized_abstract_open_book}

Let  $((W, \partial W), d \lambda)$  be a Weinstein cobordism with a connected convex boundary $M$. Let 
$\phi$ be a symplectomorphism of $(W, d \lambda)$ which is the identity in a small collar of
the boundary of $W.$  Consider the quotient manifold $N$ defined as:

$$N = \mathcal{MT}(W, \phi) \cup_{id} M \times \mathbb{D}^2, $$

\noindent notice that $N$ admits a contact structure analogous to the one discussed earlier for 
the contact abstract  open book. 
We call $N$ a generalized contact abstract open book with the binding $M,$ page $W$ and 
the monodromy $\phi.$ 
\end{definition}

By a slight abuse of notation, we will use  the same notation $\mathcal{A}ob(W, \phi)$ for 
the generalized contact abstract 
open book as well.  By a Weinstein manifold or Weinstein domain, we will always mean a Weinstein cobordism with
an empty concave boundary and a connected convex boundary. Note that  whenever the Weinstein cobordism associated to
a generalized contact abstract open book is a Weinstein manifold, we get usual contact abstract open book.

\begin{definition}[Generalized contact abstract
connected sum]\label{def:contact_band_sum} \mbox{}

Let $(\mathcal{A}ob(W_1, \phi_1), \alpha_{(W_1, \phi_1)})$
and $(\mathcal{A}ob(W_2, \phi_2), \alpha_{(W_2, \phi_2)}) $ be two generalized contact abstract open books. 
Observe that we can perform the band connected sum
$W_1 \#_b W_2$ of
$W_1$ and $W_2$ along their connected convex boundaries to 
produce a new Weinstein cobordism $W_1 \#_b W_2$ with
connected convex boundary $\partial W_1 \# \partial W_2.$ Let  
$\mathcal{A}ob(W_1, \phi_1) \# \mathcal{A}ob(W_2, \phi_2)$ be the generalized abstract open book  obtained by performing the band connected sum of their pages along convex boundaries. 

Since the page of the generalized abstract open book
are Weinstein cobordism  $W_1 \#_b W_2$ with connected
convex boundary $\partial W_1 \# \partial W_2,$ it is clear that  this generalized abstract open book   carries a natural contact structure supported by the  generalized open book having pages $W_1 \#_b W_2$ and the monodromy $\phi_1 \# \phi_2.$  This contact structure will be denoted by $Ker \{\alpha_{(W_1 \#_b W_2, \phi_1 \# \phi_2)} \}.$
We call this contact manifold   the  \emph{generalized contact abstract  connected sum}. 
\end{definition}

\begin{remark}\label{rem:contact_band_sum}\mbox{}
\begin{enumerate}
\item Observe that the binding of a generalized contact abstract band connected sum is the connected sum of the
bindings of the generalized contact abstract open books.

\item When $W_1$ and $W_2$ are Weinstein manifolds, then the generalized contact
abstract connected sum 
 $\mathcal{A}ob(W_1, \phi_1) \#_b \mathcal{A}ob(W_2, \phi_2)$ is the
contact connected sum of $\mathcal{A}ob(W_1, \phi_1)$ and 
$\mathcal{A}ob(W_2, \phi_2).$ The contact structure 
$Ker\{\alpha_{(W_1 \#_b W_2, \phi_1 \# \phi_2)}  \alpha\}$ is supported by the
open book with pages $W_1 \#_b W_2$ and the monodromy $\phi_1 \# \phi_2.$ 

\item We will sometime  use the notation 
$\mathcal{A}ob(W_1 \#_b W_2, \phi_1 \# \phi_2)$ to denote the generalized
abstract connected sum  
$\mathcal{A}ob(W_1, \phi_1) \# \mathcal{A}ob(W_2, \phi_2).$ This notation
will be used  to emphasis the abstract open book decomposition
of $\mathcal{A}ob(W_1, \phi_1) \# \mathcal{A}ob(W_2, \phi_2).$

\end{enumerate}

\end{remark}

%
%
%
%
%

\subsection{Iso-contact open book embeddings}

In this sub-section, we  discuss the notion of  \emph{iso-contact open book embeddings}. For more on open book embeddings,
refer \cite{EL} and \cite{PPS}.

\begin{definition}\label{def:contact_abstract_open_book_embedding}

Let $M = \mathcal{A}ob(\Sigma, \phi)$ and $N = \mathcal{A}ob(W, \Psi)$ be two generalized contact abstract open books.
Let $F:M \rightarrow N$ be a proper iso-contact embedding of $M$ in $N.$ We say that this embedding 
is a contact abstract open book embedding,  provided the following diagram commutes:

$$ \xymatrix{
 \mathcal{M}T(\Sigma, \phi)   \ar[rd]^{\pi_1} \ar@{^{(}->}[r]^F &  
 \mathcal{M}T(W, \Psi) \ar[d]^{\pi_2}  \\
 & \S^1 .
} $$

Here, $\pi_1:\mathcal{M}T(\Sigma, \phi) \rightarrow \S^1$ and 
$\pi_2: \mathcal{M}T(W, \Psi) \rightarrow \S^1$ are the natural projections associated to the mapping tori.

\end{definition}

We end this section by  establishing  a proposition.  The proposition, in particular, establishes  that 
if $(M_1, \xi_{M_1})$ iso-contact embeds in $(N_1, Ker\{\alpha_1\})$ and $(M_2, \xi_{M_2})$ iso-contact embeds in
$(N_2, Ker \{\alpha_2\})$, then the \emph{contact connected sum} $(M_1 \# M_2, \xi_{M_1} \# \xi_{M_2})$ iso-contact 
embeds in the contact connected sum 
$(N_1 \# N_2, Ker\{\alpha_1\} \# Ker\{\alpha_2\}) = (N_1 \# N_2, Ker\{ \alpha_1 \# \alpha_2 \} ).$ This was already proved 
by J.Etnyre and R. Fukuwara in \cite{EF}. 

\begin{proposition}\label{pro:connect_sum_embedding}
 
 If a contact abstract open book $(\mathcal{A}ob(\Sigma_i^{2n-2}, \phi_i), \eta_i)$ iso-contact open book embeds in a 
 generalized contact abstract open book 
 $(\mathcal{A}ob(W_i^{2n}, \Psi_i), \xi_i)$ for $i = 1,2,$ 
 then the contact abstract connected sum
 $(\mathcal{A}ob(\Sigma_1, \phi_1) \# \mathcal{A}ob(\Sigma_2, \phi_2), \eta_1 \# \eta_2)$ iso-contact
 open book embeds in the generalized  contact abstract  connected sum 
 $(\mathcal{A}ob(W_1, \Psi_1) \# \mathcal{A}ob(W_2, \Psi_2), \xi_1 \# \xi_2).$

 Furthermore, if $\Sigma_i$ is contained in an arbitrary small  collar of the convex boundary $M_i$ of 
 $\partial W_i, $ then we can ensure that 
 the page $\Sigma_1 \#_b \Sigma_2$ of 
 $\mathcal{A}ob(\Sigma_1 \#_b \Sigma_2, \phi_1 \# \phi_2)$ 
 is contained in an arbitrary  small collar of the convex boundary of the page of  
 $\mathcal{A}ob(W_1 \#_b W_2, \Psi_1 \# \Psi_2) .$

\end{proposition}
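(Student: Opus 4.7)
The plan is to use the very construction of the contact abstract connected sum: both the domain and the codomain are built from their pages by attaching a Weinstein $1$-handle along the convex boundaries and extending the monodromy by the identity over that handle. So it suffices to produce a single Weinstein $1$-handle carrying $\Sigma_1 \#_b \Sigma_2$ that sits, as a codimension-$2$ Weinstein subdomain, inside a Weinstein $1$-handle carrying $W_1 \#_b W_2$, in a way compatible with the given embeddings $\Sigma_i \hookrightarrow W_i$ and with the conjugation of monodromies $\Psi_i|_{\Sigma_i} = \phi_i$ coming from the hypothesized open book embeddings.

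First, I would choose base points $p_i \in \partial \Sigma_i \subset \partial W_i$ for $i = 1,2$. Because each binding $\partial \Sigma_i$ is a codimension-$2$ contact submanifold of the contact manifold $\partial W_i$, the contact neighborhood theorem supplies a contactomorphism between a neighborhood of $p_i$ in $\partial W_i$ and a standard Darboux chart in which $\partial \Sigma_i$ appears as the standard linear contact subspace. Using these matched charts on the two summands, I would attach the ambient Weinstein $1$-handle to $\partial W_1 \sqcup \partial W_2$ and, concentrically inside it, attach the smaller Weinstein $1$-handle yielding $\Sigma_1 \#_b \Sigma_2$. Both attachments are standard, so the smaller $1$-handle is an exact symplectic Weinstein subdomain of the larger one.

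Next, I would promote the page-level inclusion to the level of the full open books. Since $\phi_1 \# \phi_2$ and $\Psi_1 \# \Psi_2$ are by definition the identity on their respective newly attached $1$-handles, the inclusion $\Sigma_1 \#_b \Sigma_2 \hookrightarrow W_1 \#_b W_2$ conjugates one monodromy to the other and therefore descends to an embedding of mapping tori $\mathcal{MT}(\Sigma_1 \#_b \Sigma_2, \phi_1 \# \phi_2) \hookrightarrow \mathcal{MT}(W_1 \#_b W_2, \Psi_1 \# \Psi_2)$ covering the identity on $\S^1$. The parametric contact neighborhood theorem extends this across the binding factors $\partial(\Sigma_1 \#_b \Sigma_2) \times \DD^2 \hookrightarrow \partial(W_1 \#_b W_2) \times \DD^2$ in the standard way, giving the desired iso-contact open book embedding. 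For the \emph{furthermore} clause, the ambient $1$-handle can be attached inside an arbitrarily thin collar of the convex boundary, so the hypothesis that each $\Sigma_i$ already lies in a small collar of $\partial W_i$ forces $\Sigma_1 \#_b \Sigma_2$ to lie in a small collar of the convex boundary of $W_1 \#_b W_2$.

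The main difficulty I expect is the simultaneous Weinstein compatibility: the smaller $1$-handle must be a Weinstein subdomain of the larger one in a way that the Liouville flows agree with the collar structures used to identify a neighborhood of $\partial \Sigma_i$ inside $\partial W_i$, and that the band-summed monodromies remain exact symplectomorphisms that are the identity near the new boundary while restricting correctly on $\Sigma_i$. This reduces to choosing the Darboux charts around $p_i$ so that the Weinstein handle-attaching data nest; this is possible because both the ambient and the embedded contact structures admit compatible standard forms transverse to the attaching spheres, but the bookkeeping of the exactness data for the symplectomorphisms across the handle is where the argument requires the most care.
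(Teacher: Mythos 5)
Your construction of the embedding itself is essentially the paper's: Darboux balls around points $p_i \in \partial\Sigma_i \subset \partial W_i$ chosen so that attaching the ambient Weinstein $1$--handle simultaneously induces the band connected sum of the pages, together with the observation that $\Psi_1 \# \Psi_2$ is the identity on the new handle and near the boundaries, so it restricts to $\phi_1 \# \phi_2$ on $\Sigma_1 \#_b \Sigma_2$ and the mapping tori nest compatibly over $\mathbb{S}^1$. That part is fine.

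The gap is in the \emph{furthermore} clause, and it matters because that clause is exactly what gets used later (the page must sit in the region where the ambient contact form is the standard one, $\varepsilon\, d\theta + e^{s}\alpha$, i.e.\ in the symplectization collar of the convex boundary, not merely metrically close to it). If you attach the small handle \emph{concentrically} inside the ambient $1$--handle $B = \mathbb{D}^{2n-1}(\delta) \times \mathbb{D}(1)$, then the band of $\Sigma_1 \#_b \Sigma_2$ contains the core $\{0\} \times \mathbb{D}(1)$ of $B$. The core lies on the skeleton of the Weinstein structure on $W_1 \#_b W_2$ (it is the stable manifold of the index--$1$ critical point), and no point of the skeleton lies in a Liouville collar $(-a,0] \times \partial(W_1 \#_b W_2)$, no matter how thin the handle is; so ``attach the handle in an arbitrarily thin collar'' does not give the conclusion. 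The paper repairs this with two extra steps you omit: (i) choose the handle radius $\delta < \min\{\epsilon_1,\epsilon_2\}$, where $\Sigma_i \subset [0,\epsilon_i] \times \partial W_i$, and (ii) perform a small $C^{\infty}$ perturbation, supported near $B \cap (\Sigma_1 \#_b \Sigma_2)$, pushing the band off the core --- possible since the core is $1$--dimensional and the page has codimension $2$ --- while keeping the perturbed band inside an annular region $A_{\delta}$ of the handle that does carry a symplectic collar of the new boundary, and keeping its intersection with $\partial A_\delta$ unchanged. With these additions (the perturbation takes place where the monodromy is the identity, so compatibility with $\phi_1 \# \phi_2$ is automatic), your argument matches the paper's proof.
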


\begin{proof}
 First of all notice that since the  band connected sum of $W_1$ with $W_2$ can be
 regarded as adding a $1$--handle to $W_1 \sqcup W_2,$ 
 we can perform the band connected sum of $W_1$ with $W_2$ along their convex boundaries in such way that
 the band connected sum of $\Sigma_1$ with $\Sigma_2$ properly symplectically embeds in $W_1 \#_b W_2.$ 
 To achieve this, notice that in order to perform the band connected sum, we need to fix a small Darboux ball
 $U_1$ around a point $p_1$ in
 $\partial W_1$ and a small Darboux  ball $U_2$ around a point $p_2$  in $\partial W_2.$ 
 We fix these balls  in such a way that they 
 restrict to Darboux balls $\widetilde{U}_i$ containing the point
 $p_i$ in $\partial \Sigma_i,$ for each $i = 1,2.$ Now, if we perform the band connected
 sum of $W_1$ with $W_2,$ we get an induced band connected sum of $\Sigma_1$ with $\Sigma_2$ which is contained
 in $W_1 \#_b W_2.$ 
 
 Observe that we have not yet achieved the second property. In order to achieve this,
 we first  observe  that
 $\Sigma_1 \#_b \Sigma_2 \subset W_1 \#_b W_2$ can be made disjoint from the core of the $1$--handle $B$
 associated to $W_1 \#_b W_2$ by a sufficiently small $C^{\infty}$ perturbation whose support
 is contained in a small tubular neighborhood of  $ B \cap \Sigma_1 \#_b \Sigma_2 \subset W_1 \#_b W_2.$
 See Figure~\ref{fig:nbhd_point_connect_sum} for a pictorial description.
  
 Let $\epsilon_1$ be such that $\Sigma_1$ is contained in the symplectic collar 
 $([0, \epsilon_1 ] \times \partial W_1, d( e^t \alpha_1))$
 of $\partial W_1$, where  $e^t \alpha_1$ is the Liouville $1$--form on the symplectic collar
 of the convex boundary $\partial W_1$.

 Let $\epsilon_2$ be such that $\Sigma_2$ is contained in the symplectic collar 
 $([0, \epsilon_2] \times \partial W_2, d( e^t \alpha_2))$
 of $\partial W_1$, where  $e^t \alpha_2$ is the Liouville $1$--form on the symplectic collar of
 the convex boundary $\partial W_2.$

 Let us denote by $B = \D^{2n-1}( \delta)  \times \D(1)$  the band of length $1$ and radius $\delta$ used
 in the band connected sum $W_1 \#_b W_2.$ Clearly, by the construction $\widetilde{B} = \D^{2n-3}(\delta) \times \D(1)$
 is the band  associated to the induced band connected sum $\Sigma_1 \#_b \Sigma_2.$
 
 Let $A_{\delta}$ denote the annulus $[\frac{\delta}{10}, \delta] \times \S^{2n-2} \times \D^1.$
Notice that the part of the boundary of the band $B$ corresponding to $\D^{2n-1} \times \partial \D(1)$ 
can be assumed to have the symplectic collar $A_{\delta}.$ Hence, if  the $C^{\infty}$--perturbation that
we perform in order to make $\Sigma_1 \# \Sigma_2$ disjoint from the core of $1$-- handle is done such that
perturbed $\Sigma_1 \#_b \Sigma_2$ is contained in $A_{\delta}$ and the support of the perturbation is
contained in the complement of 
the annulus  $ [\frac{9 \delta}{10}, \delta] \times \S^{2n-1} \times \mathbb{D}^1, $ then  
the perturbed band $\widetilde{B}$ associated to $\Sigma_1 \#_b \Sigma_2$ is contained in $A_{\delta}$ and 
its intersection with the boundary of the annulus $A_{\delta}$ is the same as the intersection of unperturbed
$\widetilde{B}.$ Observe that such a perturbation is always possible. 

Next, choose $\delta$ such that $\delta < min \hspace{0.1cm} \{\epsilon_1, \epsilon_2\}.$ 
Observe that for this choice of $\delta $ the perturbed $\Sigma_1 \#_b \Sigma_2$ lies in a small 
symplectic neighborhood of $\partial W_1 \# \partial W_2$ as claimed. 
See Figure~\ref{fig:band_sum_near_boundary}.

 Finally, observe that since the
 symplectomorphisms $\Psi_1$ and $\Psi_2$ are the identity in suitable collars of the boundaries of
 $W_1$ and $W_2$ respectively, the symplectomorphism
 $\Psi_1 \# \Psi_2$ naturally induces the symplectomorphism $\phi_1 \# \phi_2$ on the 
 symplectically embedded $\Sigma_1 \#_b \Sigma_2 \subset W_1 \# W_2$ that we just described.

 
 This establishes  the proposition. 
\end{proof}

\begin{figure}[ht]
\begin{center}
\psfrag{W1}[c][t]{$ \partial W_i$}
\psfrag{W2}[c][t]{$\partial W_i$}
\psfrag{S1}[c][t]{$\partial \Sigma_i$}
\psfrag{p}[c][b]{$p_i$}
\includegraphics[width=10cm,height=4cm]{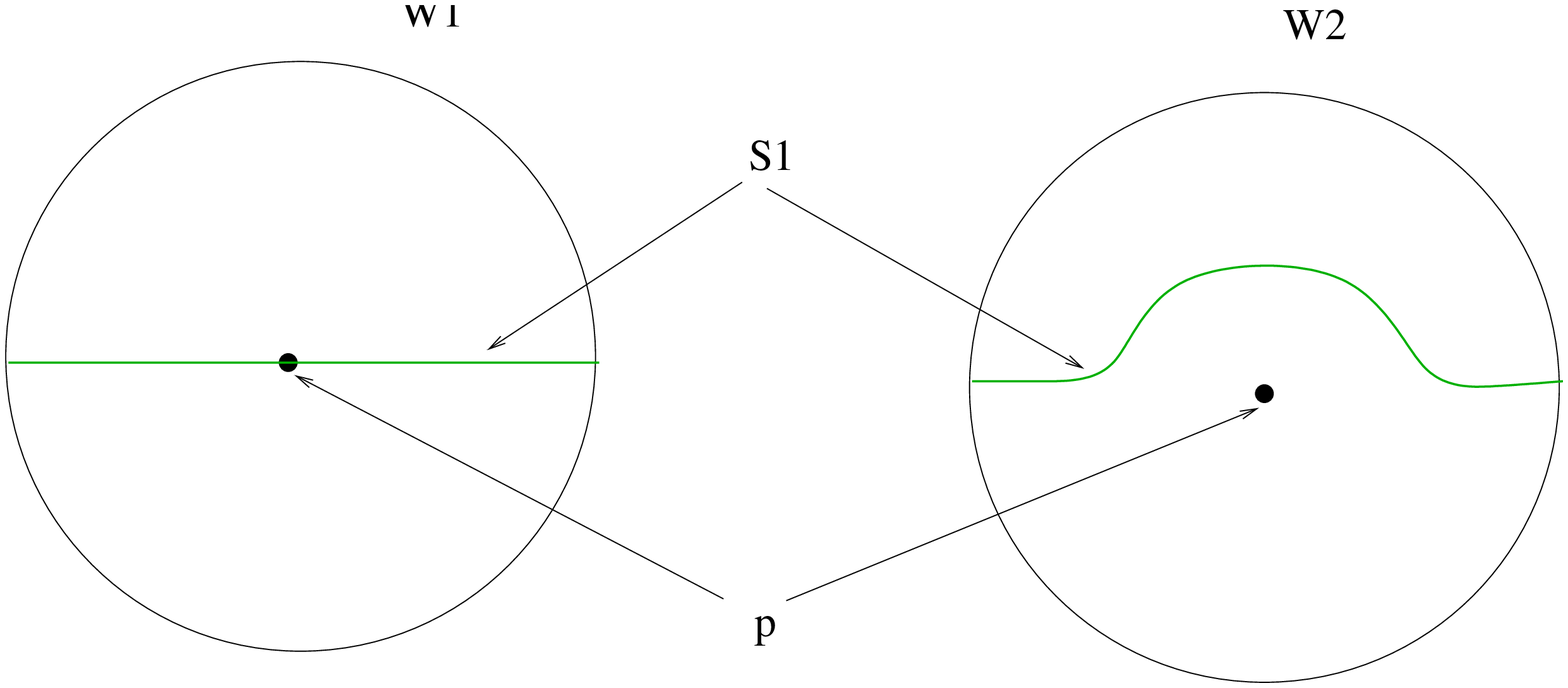}
\caption{The figure depicts a small Darboux neighborhood  of the attaching sphere $p_i$ 
contained in  $\partial \Sigma_i \subset \partial W_i$ used in performing
the band connected sum $W_1 \#_b W_2.$ The picture of on the left depicts the embedding of the
Darboux ball  of $\Sigma_i$ contained   the unperturbed $\Sigma_1 \#_b \Sigma_2$, while the
the picture on the right depicts the neighborhood after a sufficiently  small perturbation.}
\label{fig:nbhd_point_connect_sum}
\end{center}
\end{figure}

 \begin{figure}[ht]
 \begin{center}
 \psfrag{W1}[c][t]{$ [0, \epsilon] \times  \partial W_1$}
 \psfrag{W2}[c][t]{$ [0, \epsilon] \times \partial W_i$}
 \psfrag{S1}[c][b]{$\Sigma_1$}
 \psfrag{S2}[c][b]{$\Sigma_2 $}
 \psfrag{S!+S2A}[c][b]{$\Sigma_1 \#_b \Sigma_2$}
 \psfrag{W1+W2}[c][b]{$\mathcal{N}(\partial (W_1 \#_b W_2)) $}
 \psfrag{A}[c][t]{$B$}
 \psfrag{B}[c][t]{$\partial (W_1 \#b W_2)$}
 \includegraphics[width=10cm,height=4cm]{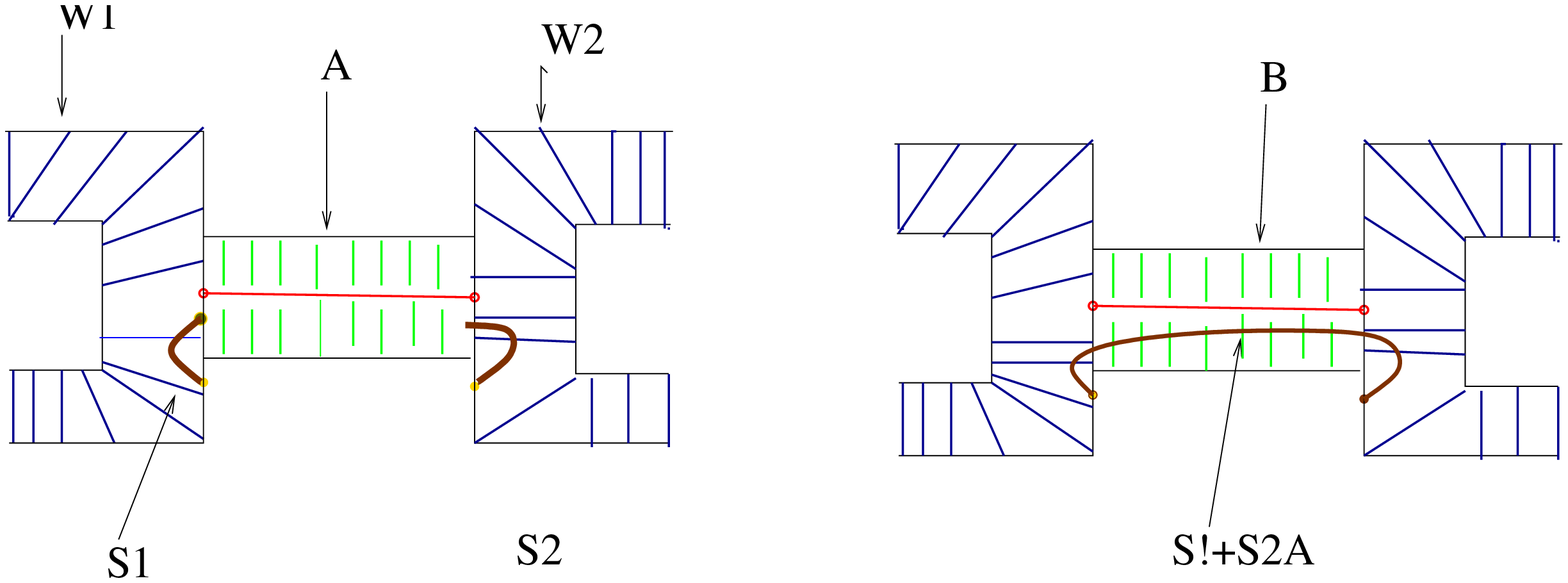}
 \caption{The figure on the left depicts a collar of $\partial W_i$ containing $\Sigma_i$ for each $i$
together with the band $B.$ The red line at the center of the band $B$ is the core of the attaching band. 
The figure on the right depicts $\Sigma_1 \# \Sigma_2$ embedded in $W_1 \#_b W_2$ close to the boundary 
$\partial (W_1 \#_b W_2) $ and disjoint from the core of the $1$-handle $B$. The green region in both figures depicts the annulus $A(\delta).$}
 \label{fig:band_sum_near_boundary}
 \end{center}
 \end{figure}

\section{Proof of the Theorem~\ref{thm:h-principle}}
\label{sec:local_flexibility}

The purpose of this section is to establish the Theorem~\ref{thm:h-principle}. There are three 
steps in establishing the Theorem~\ref{thm:h-principle}. We first mention the first  two
steps in the form of the Proposition~\ref{pro:h-principle_weak} and the Proposition~\ref{pro:ot_sphere_in_stand_sphere}.
We give proofs of these propositions in Section~\ref{sec:key_pro}.

In order to state the Proposition~\ref{pro:h-principle_weak}, we need to introduce the following notation. 
Let $(M^{2n+1}, \xi)$ be a contact manifold.  The contact structure obtained by the contact connected
sum of $(M, \xi)$ with the standard overtwisted sphere $(\S^{2n+1}, \xi_{stot})$ will be 
denoted by $\xi^{stot}.$ Notice that if  $\xi$ is supported by an open book decomposition 
$\mathcal{A}ob(\Sigma, \phi),$ then $\xi^{stot}$ is supported by the   open book 
$\mathcal{A}ob(\Sigma \#_b \mathcal{D} T^*\S^n, \phi \# \tau^{-1}).$ This follows from \cite{CMP}. See, the third example in Examples~\ref{exm:examples_of_ob}.

\begin{proposition}\label{pro:h-principle_weak}
Let $M^{2n-1}$ be a closed smooth manifold. Let $\xi$ be a contact structure on $M.$
Suppose that $(M, \xi^{stot})$ admits an iso-contact embedding in a contact manifold 
$(N^{2n+1}, \xi_N)$ with the trivial normal bundle, then  $(M, \xi)$ also admits an iso-contact embedding in 
$(N, \xi_N).$
\end{proposition}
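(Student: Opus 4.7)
The plan is to realize $(M, \xi)$ as an iso-contact open book sub-embedding inside the tubular neighborhood of the embedded $(M, \xi^{stot}) \subset (N, \xi_N)$, using the flexibility afforded by the overtwisted factor in the open book supporting $\xi^{stot}$.

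First, I would fix a supporting open book $\mathcal{A}ob(\Sigma, \phi)$ for $(M, \xi)$, which exists by combining Quinn's theorem on open book decompositions of closed odd-dimensional manifolds with the appropriate Giroux-type compatibility results. Then $(M, \xi^{stot})$, being the contact connected sum of $(M, \xi)$ with the standard overtwisted sphere $(\S^{2n-1}, \xi_{stot})$, is supported by the open book $\mathcal{A}ob(\Sigma \#_b \mathcal{D}T^*\S^{n-1}, \phi \# \tau^{-1})$, as recalled in the paragraph preceding the proposition. The hypothesized iso-contact embedding $f\colon (M, \xi^{stot}) \hookrightarrow (N, \xi_N)$ with trivial normal bundle then provides a contact tubular neighborhood $\mathcal{N} \subset N$ contactomorphic to the standard model $(M \times \D^2, \ker(\alpha^{stot} + r^2\, d\theta))$, where $\alpha^{stot}$ represents $\xi^{stot}$.

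The crucial observation is that $\mathcal{N}$ carries a natural generalized contact abstract open book structure in the sense of Definition~\ref{def:generalized_abstract_open_book}, with binding $(M, \xi^{stot})$ and Weinstein cobordism page $W$ obtained by inflating the page $\Sigma \#_b \mathcal{D}T^*\S^{n-1}$ of $(M, \xi^{stot})$'s open book along its convex boundary. By the band-sum construction underlying Proposition~\ref{pro:connect_sum_embedding}, the Weinstein domain $\Sigma$ sits symplectically inside $\Sigma \#_b \mathcal{D}T^*\S^{n-1}$ as one of its two factors, and hence also inside $W$. Moreover, since the monodromy $\phi \# \tau^{-1}$ of $\mathcal{N}$'s generalized open book restricts to $\phi$ on the subdomain $\Sigma$ (because $\tau^{-1}$ is supported entirely in the $\mathcal{D}T^*\S^{n-1}$ factor), one obtains an iso-contact open book embedding in the sense of Definition~\ref{def:contact_abstract_open_book_embedding} of $\mathcal{A}ob(\Sigma, \phi) \cong (M, \xi)$ into $\mathcal{N}$, and hence into $(N, \xi_N)$.

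The main obstacle will be the verification that $\mathcal{N}$ genuinely admits a generalized open book structure whose Weinstein cobordism page is the claimed inflation of $\Sigma \#_b \mathcal{D}T^*\S^{n-1}$ and whose monodromy correctly extends $\phi \# \tau^{-1}$. This requires exhibiting the contact form $\alpha^{stot} + r^2\, d\theta$ on the tubular model as compatible with a supporting open book on $\mathcal{N}$ that extends the supporting open book of its binding $(M, \xi^{stot})$. Once this compatibility is in place, the passage from $\xi^{stot}$ down to $\xi$ amounts to a clean restriction-of-monodromy argument, adapting the construction of Proposition~\ref{pro:connect_sum_embedding} in reverse to detach the overtwisted $\mathcal{D}T^*\S^{n-1}$ summand from the open book.
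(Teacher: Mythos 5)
Your reduction to the trivialized tubular neighborhood and your intention to invoke the band--sum machinery of Proposition~\ref{pro:connect_sum_embedding} are in the spirit of the paper's argument, but the step you call the ``crucial observation'' is not correct, and it is exactly where the real work lies. The neighborhood $\mathcal{N}\cong M\times\D^2$ of the embedded $(M,\xi^{stot})$ does not carry a generalized contact abstract open book structure with binding $(M,\xi^{stot})$ and page an ``inflation'' of the $(2n-2)$--dimensional page $\Sigma\#_b\mathcal{D}T^*\S^{n-1}$: a page of a generalized open book of the $(2n+1)$--manifold $\mathcal{N}$ with binding $M^{2n-1}$ must be a $2n$--dimensional Weinstein cobordism whose convex boundary is $M$, and the structure induced by the tubular neighborhood is the $\varepsilon$--partial open book, whose page is the product cobordism $[a,\varepsilon]\times M$ with identity monodromy; if the page were instead a Weinstein filling of $M$, the total space would be closed rather than $M\times\D^2$. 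So there is no ambient open book whose page ``is'' $\Sigma\#_b\mathcal{D}T^*\S^{n-1}$ and whose monodromy ``is'' $\phi\#\tau^{-1}$ for you to restrict.

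Even after one locates copies of the pages of $\xi^{stot}$'s open book inside the collar pages $\{c\}\times M\subset[a,\varepsilon]\times M$ (which is where the paper actually works), the proposed ``restriction of monodromy'' to the subdomain $\Sigma\subset\Sigma\#_b\mathcal{D}T^*\S^{n-1}$ does not produce an embedded copy of $\mathcal{A}ob(\Sigma,\phi)$: the boundary $\partial\Sigma$ of that subdomain is not contained in the convex boundary of the ambient page (part of it lies under the connecting band, in the interior), so the swept--out mapping torus $\mathcal{MT}(\Sigma,\phi)$ cannot be capped off by $\partial\Sigma\times\D^2$ inside $N$, and one must also check that the pulled--back form is contact, which is a quantitative statement. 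This is precisely the content your outline omits and the paper supplies: Lemma~\ref{lem:construction_of_isotopy} bends a page so that it becomes a properly embedded symplectic submanifold of the symplectization collar with boundary at the convex end; Lemma~\ref{lem:power_of_given_monodromy_embeds} sweeps such pages around $\S^1$ so as to realize the monodromy and verifies that $K\,d\theta+e^{s}\alpha$ pulls back to a contact form; these yield Lemma~\ref{lem:M_in_M_times_D^2} (an open book re-embedding of $(M,\xi)$ into the $\varepsilon$--partial open book of $(M,\xi)$ itself) and Lemma~\ref{lem:embd_std_sphere_in_nbhd_stot_sphere} (an embedding of $(\S^{2n-1},\xi_{std})$ into the $\varepsilon$--partial open book of $(\S^{2n-1},\xi_{stot})$), which are then band--summed via Proposition~\ref{pro:connect_sum_embedding}, keeping everything in the standard region near the convex boundary, to embed $\mathcal{A}ob(\Sigma\#_b\D^{2n-2},\phi\#id)\cong(M,\xi)$ into the $\varepsilon$--partial open book of $(M,\xi^{stot})$. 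Your plan to ``detach the $\mathcal{D}T^*\S^{n-1}$ summand'' reverses this assembly only at the level of abstract open books; it gives no construction of the embedding itself, so as it stands the proposal has a genuine gap at its central step.
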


 This is the key step and its proof is divided in to several smaller steps. As mentioned earlier, we will
establish each step in Section~\ref{sec:key_pro}.

The second step is to establish the following:

\begin{proposition}\label{pro:ot_sphere_in_stand_sphere}
 There exists an iso-contact embedding of $(\mathbb{S}^{2n-1}, \xi_{stot})$ in the standard contact sphere
 $(\mathbb{S}^{2n+1}, \xi_{std}).$
\end{proposition}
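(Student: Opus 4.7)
The plan is to exhibit an iso-contact open book embedding in the sense of Definition~\ref{def:contact_abstract_open_book_embedding}, using the supports from Example~\ref{exm:examples_of_ob}: $(\S^{2n+1}, \xi_{std}) \cong \mathcal{A}ob(\mathcal{D}T^*\S^n, \tau)$ and $(\S^{2n-1}, \xi_{stot}) \cong \mathcal{A}ob(\mathcal{D}T^*\S^{n-1}, \tau^{-1})$, where $\tau$ denotes the positive generalized Dehn twist. The embedding of contact manifolds will then follow from the embedding of the underlying abstract open books.

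The natural first step is the page embedding. The totally geodesic equatorial inclusion $\S^{n-1} \hookrightarrow \S^n$ lifts to a proper Weinstein embedding $\iota \colon \mathcal{D}T^*\S^{n-1} \hookrightarrow \mathcal{D}T^*\S^n$ of codimension two with trivial symplectic normal bundle. A direct computation from the formula defining the generalized Dehn twist shows that $\tau$ preserves $\iota(\mathcal{D}T^*\S^{n-1})$ and restricts there to the \emph{positive} Dehn twist on $\mathcal{D}T^*\S^{n-1}$, not the required $\tau^{-1}$: the normalized geodesic flow underlying $\tau$ preserves totally geodesic submanifolds and acts on them by the intrinsic geodesic flow. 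Hence $\iota$ alone only yields the standard iso-contact embedding $(\S^{2n-1}, \xi_{std}) \hookrightarrow (\S^{2n+1}, \xi_{std})$.

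The creative step is to invert the monodromy on the embedded page from $\tau$ to $\tau^{-1}$ while leaving the ambient contact structure $(\S^{2n+1}, \xi_{std})$ unchanged. I would invoke the generalized contact abstract connected-sum machinery of Proposition~\ref{pro:connect_sum_embedding}: band-sum the naive iso-contact open book embedding with an auxiliary iso-contact open book embedding chosen so that the combined page monodromy on the submanifold side becomes $\tau^{-1}$ (exploiting the negative-stabilization characterization of overtwisted structures from Example~\ref{exm:examples_of_ob}(3)), while the ambient side contributes a contact summand equivalent to the standard sphere, so that the resulting ambient open book still supports $(\S^{2n+1}, \xi_{std})$ via $(\S^{2n+1}, \xi_{std}) \# (\S^{2n+1}, \xi_{std}) \cong (\S^{2n+1}, \xi_{std})$.

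The main obstacle is precisely the construction of this auxiliary band-sum piece: one must design an iso-contact open book embedding whose submanifold-side page combines with $\mathcal{D}T^*\S^{n-1}$ (and whose monodromy combines with $\tau$) to produce $\mathcal{A}ob(\mathcal{D}T^*\S^{n-1}, \tau^{-1})$, while the ambient-side open book combines with $\mathcal{A}ob(\mathcal{D}T^*\S^n, \tau)$ to preserve contactomorphism type. Once this piece is in hand, the verification that the resulting map is an iso-contact open book embedding satisfying the commuting-fibration diagram of Definition~\ref{def:contact_abstract_open_book_embedding} is routine from Proposition~\ref{pro:connect_sum_embedding} and the preliminaries of Section~\ref{sec:prelim}.
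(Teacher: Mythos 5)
Your opening observation is right, and it is exactly the crux: the equatorial inclusion $\mathcal{D}T^*\S^{n-1}\hookrightarrow\mathcal{D}T^*\S^n$ intertwines the ambient monodromy with the \emph{positive} twist, so the naive open book embedding only reproduces $(\S^{2n-1},\xi_{std})\subset(\S^{2n+1},\xi_{std})$. But the step you label ``creative'' is precisely where your argument has a genuine gap, and as sketched it appears circular rather than merely incomplete. If you try to repair the monodromy by band-summing (Proposition~\ref{pro:connect_sum_embedding}), the auxiliary piece on the submanifold side must be a contact open book whose connected sum with $\mathcal{A}ob(\mathcal{D}T^*\S^{n-1},\tau)\cong(\S^{2n-1},\xi_{std})$ is $(\S^{2n-1},\xi_{stot})$, i.e.\ it must itself be an overtwisted $(2n-1)$--sphere; and on the ambient side the auxiliary summand must band-sum with $\mathcal{A}ob(\mathcal{D}T^*\S^n,\tau)$ to give back $(\S^{2n+1},\xi_{std})$, which (since connected sum with an overtwisted manifold is overtwisted, while $\xi_{std}$ is tight) forces the ambient auxiliary closed summand to be the standard $\S^{2n+1}$ itself. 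So the ``auxiliary band-sum piece'' you postpone is nothing other than an iso-contact embedding of an overtwisted $\S^{2n-1}$ into $(\S^{2n+1},\xi_{std})$ --- the statement you are trying to prove. Negative stabilization does not help here either: it modifies the page by attaching a handle and composing with a twist along the new sphere, it does not convert $\tau$ into $\tau^{-1}$ on the fixed page $\mathcal{D}T^*\S^{n-1}$.

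The paper's mechanism for reversing the monodromy is different and does not go through connected sums at all. One regards $\S^{2n-1}_{std}$ as the binding of the trivial open book of $(\S^{2n+1},\xi_{std})$, so that for every $K>0$ the $K$--partial open book $\S^{2n-1}\times\D^2_K$, containing the standard region $(\S^1\times[a,b]\times\S^{2n-1},\,Ker\{K\,d\theta+e^s\alpha_{std}\})$, embeds iso-contactly; the point is that $K$ may be taken arbitrarily large. Then Lemma~\ref{lem:power_of_given_monodromy_embeds} with $\phi=\tau$ and $m=-1$ embeds the mapping torus of $(\mathcal{D}T^*\S^{n-1},\tau^{-1})$ into this region by sweeping the page of the supporting open book of $(\S^{2n-1},\alpha_{std})$ \emph{backwards} as $\theta$ advances (built from the isotopy of Lemma~\ref{lem:construction_of_isotopy}); contactness of the pullback of $K\,d\theta+e^s\alpha$ fails to be automatic for $m=-1$, and this is exactly where the large constant $K$ is used to dominate the negative term in the volume form. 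Capping off with the binding gives the iso-contact abstract open book embedding of $(\S^{2n-1},\xi_{stot})=\mathcal{A}ob(\mathcal{D}T^*\S^{n-1},\tau^{-1})$ into the $K$--partial open book, hence into $(\S^{2n+1},\xi_{std})$. Your write-up is missing this (or any other) concrete mechanism for inverting the monodromy, so as it stands the proof does not go through.
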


Finally, the third step is to establish the following:

\begin{proposition}\label{pro:embedding_of_contact_sum_with_std_ot}
 Let $(M^{2n-1}, \xi_1)$ be a contact manifold.  If $(M, \xi_1)$ admits an
 iso-contact embedding in a contact manifold $(N, \xi_2),$ then there exists an iso-contact embedding of $(M, \xi_1^{stot})$ in the contact manifold $(N, \xi_2).$
\end{proposition}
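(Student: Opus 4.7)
The plan is to combine the hypothesized iso-contact embedding $f\colon(M,\xi_{1})\hookrightarrow(N,\xi_{2})$ with the iso-contact embedding $(\S^{2n-1},\xi_{stot})\hookrightarrow(\S^{2n+1},\xi_{std})$ supplied by Proposition~\ref{pro:ot_sphere_in_stand_sphere}, and to realize the contact connected sum of the two resulting disjoint iso-contact submanifolds of $(N,\xi_{2})$ via an ambient Weinstein $1$-handle. Since smoothly $M\#\S^{2n-1}\cong M$ and by definition $\xi_{1}\#\xi_{stot}=\xi_{1}^{stot}$, this will yield the required iso-contact embedding of $(M,\xi_{1}^{stot})$ in $(N,\xi_{2})$. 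The underlying reason this works is that contact connected sum with the standard sphere $(\S^{2n+1},\xi_{std})$ is the identity operation on contact manifolds, so the ambient tubing does not change $(N,\xi_{2})$ up to contactomorphism.

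First I would localize the embedding given by Proposition~\ref{pro:ot_sphere_in_stand_sphere}. Since $(\S^{2n-1},\xi_{stot})$ is closed its image misses a point of $\S^{2n+1}$, and $(\S^{2n+1}\setminus\{\mathrm{pt}\},\xi_{std})$ is contactomorphic to $(\R^{2n+1},\xi_{std})$; rescaling then places an iso-contact copy of $(\S^{2n-1},\xi_{stot})$ inside any preassigned Darboux ball of any contact manifold. Pick a small Darboux ball $U\subset N$ disjoint from $f(M)$ and use this to obtain an iso-contact embedding $g\colon(\S^{2n-1},\xi_{stot})\hookrightarrow(N,\xi_{2})$ with image in $U$. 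Thus $f(M)$ and $g(\S^{2n-1})$ are disjoint iso-contact submanifolds of $(N,\xi_{2})$.

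Next, choose basepoints $p\in M$ and $q\in\S^{2n-1}$ together with small Darboux balls $B_{M}\subset M$ about $p$ and $B_{\S}\subset\S^{2n-1}$ about $q$. By the standard neighborhood theorem for codimension-two contact submanifolds, $f|_{B_{M}}$ and $g|_{B_{\S}}$ extend to the model flat codimension-two inclusion of Darboux balls into disjoint ambient Darboux balls $\widetilde{B}_{M},\widetilde{B}_{\S}\subset N$. Joining these by an embedded arc in $N$ and thickening to a standard Weinstein $1$-handle inside $(N,\xi_{2})$ produces a local model in which one can simultaneously perform the contact connected sum of $f(M)$ with $g(\S^{2n-1})$ along the handle, exactly as in the band-sum construction used in the proof of Proposition~\ref{pro:connect_sum_embedding} (compare Figures~\ref{fig:nbhd_point_connect_sum} and~\ref{fig:band_sum_near_boundary}). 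The resulting iso-contact embedding has source $(M\#\S^{2n-1},\xi_{1}\#\xi_{stot})=(M,\xi_{1}^{stot})$ and target $(N,\xi_{2})$.

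The main obstacle is making rigorous the simultaneous ambient contact connected sum of two disjoint iso-contact submanifolds inside $(N,\xi_{2})$ along a Weinstein $1$-handle, and checking that attaching such a handle does not alter the ambient contact manifold. Both points reduce to the standard neighborhood theorem for contact submanifolds together with the observation that the ambient $1$-handle corresponds to a trivial contact connected sum of $(N,\xi_{2})$ with $(\S^{2n+1},\xi_{std})$, under which $(N,\xi_{2})$ is preserved up to contactomorphism.
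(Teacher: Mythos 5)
Your proposal is correct and follows essentially the same route as the paper: combine the embedding of $(\S^{2n-1},\xi_{stot})$ in $(\S^{2n+1},\xi_{std})$ from Proposition~\ref{pro:ot_sphere_in_stand_sphere} with the given embedding of $(M,\xi_1)$, perform the connected sum of embeddings as in Proposition~\ref{pro:connect_sum_embedding}, and use that connect-summing with the standard contact sphere does not change $(N,\xi_2)$. The only difference is cosmetic: you realize the sum ambiently by placing the overtwisted sphere in a Darboux ball of $N$ and tubing, whereas the paper forms $(N\#\S^{2n+1},\xi_2\#\xi_{std})$ abstractly and then identifies it with $(N,\xi_2)$ by a contactomorphism.
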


\begin{proof}

Observe  that it follows from the Proposition~\ref{pro:connect_sum_embedding} that if 
$(M_1, \xi_1)$ iso-contact embeds in $(N_1, \eta_1)$ and $(M_2, \xi_2)$ iso-contact 
$(N_2, \eta_2)$, then $(M_1 \# M_2, \xi_1 \# \xi_2)$ iso-contact embeds in 
$(N_1 \# N_2, \eta_1 \# \eta_2).$

We know from the Proposition~\ref{pro:ot_sphere_in_stand_sphere} that  there is an iso-contact embedding of 
$(\mathbb{S}^{2n-1}, \xi_{stot})$ in the
standard contact  sphere $(\mathbb{S}^{2n+1}, \xi_{std}).$ 
This implies that $(M \# \mathbb{S}^{2n-1}, \xi_1 \# \xi_{stot})$ admits an iso-contact 
embedding in $(N \# \mathbb{S}^{2n+1}, \xi_2 \# \xi_{std}).$

Next, note that  the contact manifold $(N \# \mathbb{S}^{2n+1}, \xi_2 \# \xi_{std})$ is 
contactomorphic  to $(N, \xi_2).$ Hence, we have an iso-contact embedding
of $(M, \xi_1^{stot}) = (M \# \S^{2n-1}, \xi_1 \# \xi_{stot})$ in the 
contact manifold $(N, \xi_2).$ 

\end{proof}

Let us now discuss how these three steps imply  the Theorem~\ref{thm:h-principle}.

\begin{proof}[ Proof of the Theorem~\ref{thm:h-principle}] \mbox{}

When the co-dimension of the embedding of $M$ in $N$ is bigger than or equal to $4,$ the 
theorem was already established by Gromov in \cite{Gr}. Hence, from 
now on, we assume that the co-dimension of $M$ in $N$ is $2.$

First of all,  since an overtwisted contact structure is unique in its homotopy class of almost contact structures,
we get that the contact structures $(M, \xi_M^{ot} \# \xi_{stot})$ and $(M, \xi_M^{stot})$ are contactomorphic.
Hence,  it follows from the Proposition~\ref{pro:embedding_of_contact_sum_with_std_ot}  that there is an iso-contact embedding of $(M, \xi_M^{stot})$
in $(N, \xi_N)$ with the trivial normal bundle.  
It follows from the proposition~\ref{pro:h-principle_weak}  that there is an iso-contact embedding of $(M, \xi_M)$
in $(N, \xi_N)$ as required.

\end{proof}

Let us now discuss how the Proposition~\ref{cor:h-principle_for_embedding_in_spheres} follows from the Theorem~\ref{thm:h-principle}.

\begin{proof}[Proof of the Proposition~\ref{cor:h-principle_for_embedding_in_spheres}]\mbox{}

Since the Euler class of the normal bundle of any embedded closed orientable manifold $M$ in 
$\S^k$ has to be zero, we get that the manifold $M^{2n-1}$ admits an embedding 
in $\S^{2n+1}$ with the trivial normal bundle. 

Next, assume that $M$ admits an embedding in $(\S^{2n+1}, \xi_{std})$ such that the
induced contact structure $\xi$ is homotopic to $\xi_M.$  
The Proposition~\ref{pro:ot_sphere_in_stand_sphere} implies
that there exits an iso-contact embedding of $(\S^{2n-1},\xi_{stot})$ in
$(\S^{2n+1}, \xi_{std}).$  Hence, it follows from the Proposition~\ref{pro:connect_sum_embedding} that there exists an iso-contact embedding of
$(M, \xi \# \xi_{stot})$ in $(\S^{2n+1}, \xi_{std}).$ 
Now, since the overtwisted contact structures $\xi \# \xi_{stot}$ and 
the $\xi_M \# \xi_{stot}$ are homotopic as almost contact structures, by the
uniqueness of an overtwisted contact structure in a given homotopy class of 
almost contact structures, we get that there is an iso-contact embedding 
of $(M, \xi_M \# \xi_{stot}) = (M, \xi_M^{stot})$ in the standard contact sphere.

The Proposition~\ref{pro:h-principle_weak} now implies that there is an
iso-contact embedding of $(M, \xi_M)$ in the standard contact sphere as claimed.
This completes our argument.

\end{proof}

The next section is devoted to establish the proof of the Proposition~\ref{pro:h-principle_weak}. 
Using the techniques developed to establish the Proposition~\ref{pro:h-principle_weak}, we
will also establish the Proposition~\ref{pro:ot_sphere_in_stand_sphere}.

\section{Proofs of  Proposition~\ref{pro:h-principle_weak} and Proposition~\ref{pro:ot_sphere_in_stand_sphere}}
\label{sec:key_pro}

In order to prove the
Proposition~\ref{pro:h-principle_weak}, we would like to think of $M \times \mathbb{D}^2_{\varepsilon}$ as an abstract open book. This
abstract open book is a special case of a generalized abstract open book.  Since we will need it
time and again,  we introduce a special terminology for it. 

\begin{definition}[$\varepsilon$--partial open book]
Let $W$ be the product Weinstein cobordism $([a,\varepsilon] \times M, d (e^{-r} \alpha)),$ where $\alpha$ be a contact form
on $M$ and $\{a\} \times M$ is the convex boundary. 
Consider the mapping torus $\mathcal{M}T(W, id) = \S^1 \times ([a,\varepsilon] \times M)$ with the
contact form $ \varepsilon d \theta + e^{-r} \alpha.$  Consider the  contact abstract open book
$(\mathcal{A}ob(W, id), \alpha_{(W, id)}) = (M \times \mathbb{D}^2_{\varepsilon}, Ker\{h_1(r) \alpha + h_2(r) d \theta \} )$  
constructed using the pair of functions $h_1$ and $h_2$  as depicted in Figure~\ref{fig:h_1_and_h_2} and satisfying the following properties:

\begin{enumerate}
 \item $h_1(r) > 0$,  decreasing and $h_1'(0) = 0$ and $h_1'(r) <0$ for every $r \in (0,b).$
 \item $h_1(r) = e^{-r}$ near $\varepsilon.$ 
 \item $h_2(r) = r^2$ near $r = 0$ and $h_2$ is non-decreasing and $h_2(r)$ is the constant $\varepsilon$  near $\varepsilon.$
\item $h_2^{\prime}(r) h_1(r) - h_1'(r)h_2(r)$ is always positive.
\end{enumerate}

This contact open book is called an $\varepsilon$--partial open book associated to  $(M, Ker\{\alpha \}).$
\end{definition}

\begin{figure}[ht]
\begin{center}
\psfrag{h1}{$h_1$}
\psfrag{h2}{$h_2$}
\includegraphics[width=10cm,height=4cm]{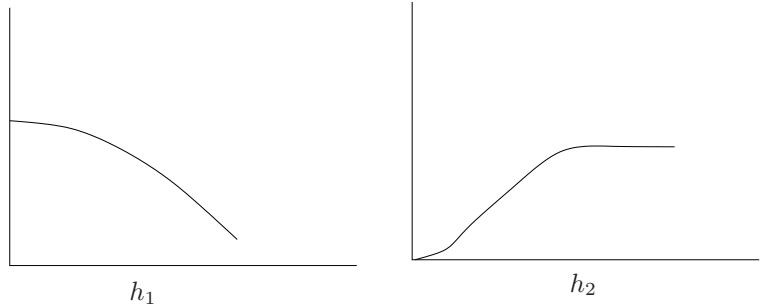}
\caption{The figure depicts the graphs of the functions $h_1$ and $h_2.$ 
These functions are also used in the construction of contact abstract open 
book in \cite{Ko}.}
\label{fig:h_1_and_h_2}
\end{center}
\end{figure}

\begin{remark}\label{rmk:std_region_pob}
\begin{enumerate}
\item The form $h_1(r) \alpha + h_2(r) d \theta$ on the $\varepsilon$--partial open
book is supported by the $\varepsilon$--partial open book.

 \item The region of an $\varepsilon$--partial open book where the form is given by $\varepsilon d \theta + e^{-r} \alpha$ 
will be referred as the \emph{standard region} associated to the partial open book. 

\item By changing the
co-ordinates $s = -r$ we will denote the standard region by $S^1 \times 
[-\varepsilon, -a] \times M$ for some $-a \in  (-\varepsilon, 0).$ The contact form in the standard region will be described by the formula $\varepsilon d \theta + e^s \alpha.$
%
%

\end{enumerate}
\end{remark}

In the next sub-section, we establish the Proposition~\ref{pro:h-principle_weak}.

\subsection{Proof of Proposition~\ref{pro:h-principle_weak}}\mbox{}

The proof of the Proposition~\ref{pro:h-principle_weak} can be divided into three steps.

The first step, which readily follows from neighborhoods of contact sub-manifolds discussed \cite[Theorem:2.5.15]{Ge}
is stated as the following:

\begin{lemma}\cite[Theorem:2.5.15]{Ge}\label{lem:abstract_darboux}
 Let $(N, \xi_N)$ be a contact manifold. Let $(M, \xi_M)$ be a contact sub-manifold of $(N, \xi_N)$ with the
 trivial normal bundle. If $Ker \{\alpha \}$ is contactomorphic to  $\xi_M$ on $M,$ then there exists an 
 $\varepsilon_0$--positive such that there is an iso-contact embedding of an $\varepsilon$--partial open book associated
 to $(M, Ker \{\alpha\})$ in $(N, \xi_N)$ for  every $\varepsilon$ smaller than $\varepsilon_0.$
 \end{lemma}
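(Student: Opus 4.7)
The plan is to reduce the lemma to a direct calculation by first putting a neighborhood of $M$ in $N$ into a standard normal form, and then exhibiting an explicit radial rescaling that matches the two contact forms. First, since $\ker\{\alpha\}$ is contactomorphic to $\xi_M$, I may compose the embedding $M \hookrightarrow N$ with a contactomorphism $(M, \xi_M) \to (M, \ker\{\alpha\})$ and thereby assume, without loss of generality, that $\alpha$ itself is a defining contact form for the induced contact structure on the submanifold $M \subset N$. Applying the standard neighborhood theorem for codimension--$2$ contact submanifolds with trivial normal bundle (Theorem 2.5.15 of \cite{Ge}) then produces some $\varepsilon_0 > 0$ and a strict contactomorphism between a tubular neighborhood of $M$ in $N$ and $(M \times \mathbb{D}^2_{\varepsilon_0}, \ker(\alpha + r^2\, d\theta))$, where $(r, \theta)$ are polar coordinates on the disk factor.

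Next, I would construct the required iso-contact embedding of the $\varepsilon$-partial open book into this standard model by radially rescaling the disk factor. Define
\[
\rho(r) := \sqrt{h_2(r)/h_1(r)}, \qquad \Phi(x, r, \theta) := (x, \rho(r), \theta).
\]
The function $\rho$ is smooth at $r = 0$ because $h_2(r) = r^2$ and $h_1(0) > 0$ near the origin, so $\rho(r) = r/\sqrt{h_1(0)}$ there. It is strictly increasing on $(0, \varepsilon]$ because
\[
2 \rho(r)\, \rho'(r) \; = \; \frac{h_2'(r) h_1(r) - h_1'(r) h_2(r)}{h_1(r)^2} \; > \; 0
\]
by property $(4)$ of the defining functions together with positivity of $h_1$. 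Hence $(r, \theta) \mapsto (\rho(r), \theta)$ is a smooth embedding of $\mathbb{D}^2_{\varepsilon}$ into the plane. Since $\rho$ is continuous with $\rho(0) = 0$, after possibly shrinking $\varepsilon_0$ I can arrange that $\rho(\varepsilon) \leq \varepsilon_0$ for every $\varepsilon < \varepsilon_0$, so that $\Phi$ actually lands inside the standard neighborhood.

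The iso-contact property is then verified by a one-line calculation:
\[
\Phi^*(\alpha + r^2\, d\theta) \; = \; \alpha + \rho(r)^2\, d\theta \; = \; \alpha + \frac{h_2(r)}{h_1(r)}\, d\theta \; = \; \frac{1}{h_1(r)} \bigl( h_1(r)\, \alpha + h_2(r)\, d\theta \bigr),
\]
and since $h_1 > 0$ throughout, this is a positive conformal multiple of the contact form defining the $\varepsilon$-partial open book. There is no serious obstacle in this argument: once the standard neighborhood is in place, the correct radial rescaling $\rho$ is essentially forced by matching the coefficients of $\alpha$ and $d\theta$ on the two sides, and the positivity of $\rho'$ is precisely the content of the fourth defining property of $h_1$ and $h_2$. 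Thus $\Phi$ furnishes the required iso-contact embedding of the $\varepsilon$-partial open book into $(N, \xi_N)$.
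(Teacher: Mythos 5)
Your argument is correct and follows the same route the paper takes: the paper simply cites the tubular neighborhood theorem for contact submanifolds \cite[Theorem 2.5.15]{Ge} and asserts the lemma ``readily follows,'' and your explicit radial rescaling $\rho=\sqrt{h_2/h_1}$, whose monotonicity is exactly property (4) of $h_1,h_2$, is the natural way to make that assertion precise. The only cosmetic points are that you use $\varepsilon_0$ both for the model-neighborhood radius and for the constant in the lemma (for the $\varepsilon$--partial open book one needs $\rho(\varepsilon)=\sqrt{\varepsilon e^{\varepsilon}}$ below the neighborhood radius, which your shrinking argument handles), and the neighborhood identification need only be a contactomorphism rather than a strict one, since iso-contact embeddings are defined up to a positive conformal factor.
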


We now state the next two steps in the form of lemmas whose proofs we will provide in the subsequent sub-sections.  

The second step is to establish is the following:

\begin{lemma}\label{lem:M_in_M_times_D^2}
Let $(M,Ker\{ \alpha \}) = (\mathcal{A}ob(\Sigma, \phi), Ker \{\alpha_{(\Sigma, \phi)}\})$. Let
$\varepsilon_0 >0$ be given. 
There exists
a contact abstract  open book embedding $F$ of $(M, Ker \{\alpha \})$ in $\varepsilon_0$--partial open book 
$M \times \mathbb{D}^2_{\varepsilon_0}.$  

In particular, this implies the following:

\begin{enumerate}
\item
  $F$ is constructed such that if $\S^1 \times [-\varepsilon_0, -a] \times M$ 
is  the standard region associated to the $\varepsilon_0$--partial open book for some $0 < a < \varepsilon_0,$ then
the following diagram commutes:

$$ \xymatrix{
 \mathcal{M}T(\Sigma, \phi)  \ar[rd]^{\pi_1} \ar@{^{(}->}[r]^F &  
  \S^1 \times [-\varepsilon_0, -a] \times M \ar[d]^{\pi}  \\
 & \S^1 .
} $$ 

\item

The   pull-back under $F$ of the $1$--form $ \varepsilon_0 d \theta + e^s \alpha$ induces the contact structure  $Ker \{\alpha\}$ restricted to $\mathcal{M}T(\Sigma, \phi).$

\end{enumerate}

\end{lemma}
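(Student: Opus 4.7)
My plan is to construct the embedding $F$ in three stages: a diagonal open book embedding of $\mathcal{MT}(\Sigma, \phi)$ into the standard region, an iso-contact correction via a fiber-preserving contactomorphism of $M$, and a smooth extension across the binding.

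I would begin by fixing the Thurston--Winkelnkemper normal form $\alpha = K\, d\theta + \eta_\theta$ on $\mathcal{MT}(\Sigma, \phi)$, where $K$ is a large positive constant and $\eta_\theta$ is the $\theta$-parametrized Liouville family on $\Sigma$ interpolating between $\lambda$ and $\phi^{*}\lambda$. Choosing a level $s_0 \in (-\varepsilon_0, -a)$ in the standard region, the diagonal map
\[
F_0 \colon \mathcal{MT}(\Sigma, \phi) \longrightarrow S^{1} \times [-\varepsilon_0, -a] \times M, \qquad F_0([x, \theta]) = (\theta,\, s_0,\, [x, \theta])
\]
is a smooth embedding, well-defined by the compatibility of the $\phi$-monodromy on the source with the trivial monodromy on the target, and the commuting-diagram requirement of item $(1)$ holds tautologically. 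A short computation of the pullback gives
\[
F_0^{*}\bigl(\varepsilon_0\, d\theta + e^{s}\alpha\bigr) \;=\; e^{s_0}\bigl(K^{*}\, d\theta + \eta_\theta\bigr), \qquad K^{*} := K + \varepsilon_0 e^{-s_0},
\]
a positive multiple of a second Thurston--Winkelnkemper form $\alpha_{K^{*}}$ supported by the same open book $\mathcal{A}ob(\Sigma, \phi)$, so that $\ker\alpha_{K^{*}}$ is isotopic to $\ker\alpha$.

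Next I would produce a fiber-preserving diffeomorphism $\Phi$ of $M$ satisfying $\Phi^{*}\alpha_{K^{*}} = g\alpha$ for a positive function $g$. The straight-line family $\alpha_t = ((1-t) K + t K^{*})\, d\theta + \eta_\theta$ joins $\alpha$ and $\alpha_{K^{*}}$ through contact forms for the same supporting open book, and its time derivative $\dot\alpha_t = (K^{*} - K)\, d\theta$ is purely in the $d\theta$-direction. The special form of $\dot\alpha_t$ suggests that the Gray vector field $X_t$ can be solved with its $\Sigma$-component dual (via $d\lambda$) to the Liouville form $\eta_\theta$ and with a controllable $\partial_\theta$-component; taking $\Phi := \Phi_1$ to be the time-$1$ flow, one sets $F := F_0 \circ \Phi$ and obtains an iso-contact open book embedding on the mapping torus. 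The embedding is then extended across the binding $\partial\Sigma \subset M$ using the standard contact-neighborhood model for a codimension-$2$ submanifold with trivial normal bundle, sending $\partial\Sigma$ into the binding $M \times \{0\}$ of the partial open book and matching smoothly with the mapping-torus portion.

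The step I expect to be the main obstacle is verifying that the Gray-stability flow $\Phi_t$ is genuinely fiber-preserving, i.e., that it descends to an honest diffeomorphism of $M$ respecting the open book fibration $\pi_M$. A priori the Gray vector field has a nontrivial $\partial_\theta$-component, and eliminating it (or showing it integrates to a page-preserving isotopy) will require either exploiting the fact that $\dot\alpha_t$ is constant in the $d\theta$ direction to solve the Gray ODE page-by-page, or, as a fallback, modifying the initial diagonal $F_0$ by a compensating shear in the $\Sigma$-direction so that $\Phi$ can be chosen fiber-preserving by construction.
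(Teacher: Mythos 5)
Your first stage is, in essence, the paper's own mechanism: the paper also embeds $\mathcal{MT}(\Sigma,\phi)$ ``diagonally'', page over page, into the standard region (this is what the families $f_{(c,t)}$ and the embedding $F$ of Lemmas~\ref{lem:construction_of_isotopy} and~\ref{lem:power_of_given_monodromy_embeds} do), and your pullback computation $F_0^{*}(\varepsilon_0 d\theta+e^{s}\alpha)=e^{s_0}(K^{*}d\theta+\eta_\theta)$ correctly identifies the induced form as a Thurston--Winkelnkemper form for the same open book. The genuine gap is your second stage. A fibre-preserving correction $\Phi$ with $\Phi^{*}\alpha_{K^{*}}=g\,\alpha$ which is the identity near $\partial\mathcal{MT}(\Sigma,\phi)$ (and you do need that boundary condition, both to keep the commuting diagram after capping off the binding and to match the extension over $\partial\Sigma\times\mathbb{D}^2$) cannot exist: writing $\Phi(\theta,x)=(\theta,\psi_\theta(x))$, the conformal equation splits into $\psi_\theta^{*}\eta_\theta=g\,\eta_\theta$ on pages and $K^{*}+\eta_\theta(\partial_\theta\psi_\theta)=gK$ in the $d\theta$-direction; where $\Phi$ is the identity this forces $g=1$ and $K^{*}=K$, contradicting $K^{*}=K+\varepsilon_0e^{-s_0}>K$. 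So neither ``solving the Gray ODE page by page'' nor a compensating shear can produce the exact conformal identity you are after. The repair is to drop exactness: the lemma, as the paper proves and uses it (see property (3) of Lemma~\ref{lem:power_of_given_monodromy_embeds}), only asserts that the induced contact structure is contactomorphic to $\ker\{\alpha\}$ restricted to the mapping torus, and your linear family $\alpha_t=((1-t)K+tK^{*})d\theta+\eta_\theta$ together with Gray stability (equivalently, uniqueness of contact structures supported by a fixed open book) already delivers exactly that, with no need to precompose $F_0$ by anything.

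Two further discrepancies with what is actually required. The paper does not leave the pages at a fixed level $s_0$: Lemma~\ref{lem:construction_of_isotopy} bends the collar of each embedded page so that it becomes properly embedded in the ambient page $[a,\varepsilon_0]\times M$ with boundary in a small collar of its convex boundary. This is not decorative: it is precisely what is invoked in the proof of Proposition~\ref{pro:h-principle_weak} and in the ``furthermore'' part of Proposition~\ref{pro:connect_sum_embedding}, where the band connected sum is performed along convex boundaries and needs $\partial\Sigma_\theta\subset\partial W$. Your fixed-level embedding, capped by pushing $\partial\Sigma$ into the binding $M\times\{0\}$, does not have this property, so even if it verifies the literal statement it would not feed into the rest of the argument. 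Finally, the extension across the binding needs more than an appeal to the standard neighborhood model: the diagonal map does not extend continuously over $B=\partial\Sigma$ (its closure contains the whole circle $\S^1\times\{s_0\}\times\{b\}$ for each $b\in B$), so one must cap with an explicit model such as $(b,\rho e^{i\varphi})\mapsto\bigl((b,\rho e^{i\varphi}),c(\rho)e^{i\varphi}\bigr)$ and verify the contact condition for the resulting form $H_1(\rho)\lambda_B+H_2(\rho)\,d\varphi$; this verification is absent from your sketch (and, admittedly, is also left implicit in the paper).
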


The third step is to establish the following:

\begin{lemma}\label{lem:embd_std_sphere_in_nbhd_stot_sphere}
  For every $\varepsilon > 0,$ there exists an contact abstract open
 book embedding of
 the standard contact sphere $(\mathbb{S}^{2n-1}, \xi_{std})$ in the $\varepsilon$--partial open book associated to 
 $(\mathbb{S}^{2n-1}, Ker \{\alpha_{stot} \}),$ where the standard contact sphere is regarded as an
 abstract open book with pages the standard symplectic $(2n-2)$--disc and monodromy the identity.

In particular, this implies the following:

\begin{enumerate}
\item
  $F$ is constructed such that if $\S^1 \times [-\varepsilon_0, -a] \times \S^{2n-1}$ for some
$ 0 < a < \varepsilon_0$,
is  the standard region associated to the $\varepsilon$--partial open book, then
the following diagram commutes:

$$ \xymatrix{
 \mathcal{M}T(\mathbb{D}^{2n-2}, id)  \ar[rd]^{\pi_1} \ar@{^{(}->}[r]^F &  
  \S^1 \times [-\varepsilon_0, -a] \times \S^{2n-1} \ar[d]^{\pi}  \\
 & \S^1 .
} $$ 

\item

The  pull-back under $F$ of the $1$--form $ \varepsilon d \theta + e^s \alpha_{stot}$ induces 
the contact structure $\alpha_{std}$ restricted to $\mathcal{M}T(\mathbb{D}^{2n-2}, id).$

\end{enumerate}

\end{lemma}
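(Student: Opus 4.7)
The strategy is to adapt the construction of Lemma~\ref{lem:M_in_M_times_D^2} by viewing the trivial open book $\mathcal{A}ob(\mathbb{D}^{2n-2}, id)$ of $(\S^{2n-1}, \xi_{std})$ as a ``sub-open book'' of the open book $\mathcal{A}ob(\mathcal{D}T^*\S^{n-1}, \tau^{-1})$ presenting $(\S^{2n-1}, \xi_{stot})$. The crucial geometric input is that the generalized Dehn twist $\tau^{-1}$ can be chosen to be compactly supported in an arbitrarily small neighborhood of the zero section of $\mathcal{D}T^*\S^{n-1}$, as recorded in Subsection~\ref{subsec:Giroux_correspondence}.

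Fix a representative of $\tau^{-1}$ whose support $V$ is contained in a small neighborhood of the zero section $\S^{n-1} \subset \mathcal{D}T^*\S^{n-1}$. Then pick a Weinstein (Darboux-type) embedding $\iota: \mathbb{D}^{2n-2} \hookrightarrow \mathcal{D}T^*\S^{n-1} \setminus V$; such an embedding exists since $\mathcal{D}T^*\S^{n-1} \setminus V$ is a non-empty open symplectic manifold of dimension $2n-2$. Because $\tau^{-1}$ restricts to the identity on the image $\iota(\mathbb{D}^{2n-2})$, the product map $\Id_{\S^1} \times \iota$ descends to an embedding of mapping tori
\[
\mathcal{M}T(\mathbb{D}^{2n-2}, id) \hookrightarrow \mathcal{M}T(\mathcal{D}T^*\S^{n-1}, \tau^{-1})
\]
that commutes with the projections to $\S^1$.

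Now apply Lemma~\ref{lem:M_in_M_times_D^2} to $(\S^{2n-1}, \xi_{stot}) = \mathcal{A}ob(\mathcal{D}T^*\S^{n-1}, \tau^{-1})$ to obtain a contact abstract open book embedding $G: (\S^{2n-1}, \xi_{stot}) \hookrightarrow \S^{2n-1} \times \mathbb{D}^2_\varepsilon$ sending $\mathcal{M}T(\mathcal{D}T^*\S^{n-1}, \tau^{-1})$ into the standard region $\S^1 \times [-\varepsilon, -a] \times \S^{2n-1}$ and pulling $\varepsilon d\theta + e^s \alpha_{stot}$ back to the restriction of $\alpha_{stot}$ to $\mathcal{M}T(\mathcal{D}T^*\S^{n-1}, \tau^{-1})$. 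Composing $G$ with the embedding from the previous paragraph produces the required map $F: \mathcal{M}T(\mathbb{D}^{2n-2}, id) \hookrightarrow \S^1 \times [-\varepsilon, -a] \times \S^{2n-1}$ satisfying the commutative diagram in item $(1)$. To promote $F$ to a contact open book embedding of the full $(\S^{2n-1}, \xi_{std})$, extend $F$ across the binding $\S^{2n-3} = \partial \mathbb{D}^{2n-2}$ into the binding region $\S^{2n-1} \times \mathbb{D}^2$ of the target partial open book by using the standard product structure near the binding.

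The main technical subtlety is the verification of item $(2)$. By construction, the pull-back of $\varepsilon d\theta + e^s \alpha_{stot}$ along $F$ is the restriction of the Thurston--Winkelnkemper-type contact form $\alpha_{stot}$ to the sub-mapping-torus $\mathcal{M}T(\iota(\mathbb{D}^{2n-2}), id)$, which has the shape $\iota^*\lambda + K\,d\theta$ with $\iota^*\lambda$ a Liouville primitive for the standard symplectic form on $\mathbb{D}^{2n-2}$. This $1$-form satisfies the three defining properties of a contact form supported by the open book $\mathcal{A}ob(\mathbb{D}^{2n-2}, id)$. By the uniqueness-up-to-isotopy of the contact structure supported by a given open book, recorded in Subsection~\ref{subsec:open_book_and_contact_structure}, the kernel of this form coincides, after an isotopy, with $\xi_{std}$ restricted to the mapping torus. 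Composing $F$ with this isotopy yields the embedding required by both conclusions of the lemma.
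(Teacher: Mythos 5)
Your mapping-torus construction (small-support representative of $\tau^{-1}$, a Darboux disc $\iota(\mathbb{D}^{2n-2})$ in the complement of the support, composition with the embedding of Lemma~\ref{lem:M_in_M_times_D^2} applied to $(\S^{2n-1},\xi_{stot})$) is a genuinely different route from the paper's, and that part is plausible. The genuine gap is the one-sentence extension over the binding region of the source sphere, and your description of it is geometrically off. Under Lemma~\ref{lem:M_in_M_times_D^2} (built from Lemma~\ref{lem:construction_of_isotopy}) the interior of the ambient page, and hence your Darboux disc, is carried to a single level $\{c\}$ in the interior of the standard region; so the boundary of your embedded mapping torus is $\S^1\times\{c\}\times\iota(\partial\mathbb{D}^{2n-2})$, nowhere near the region $\S^{2n-1}\times\mathbb{D}^2_a$ that contains the binding of the target. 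The circles $\S^1\times\{pt\}$ which the $\mathbb{D}^2$--fibres of the source binding region $\S^{2n-3}\times\mathbb{D}^2$ must cap off generate $\pi_1$ of the whole mapping-torus region $\S^1\times[a,\varepsilon]\times\S^{2n-1}$, so the capping discs cannot be inserted locally near the image boundary: they are forced to run across the standard region and into $\S^{2n-1}\times\mathbb{D}^2_a$, while inducing the model form $h_1(r)\,\alpha_{std}|_{\S^{2n-3}}+h_2(r)\,d\theta$ and avoiding the already-embedded piece. You give no construction of this, and it is exactly the nontrivial step. The paper never meets it in this lemma: there the full open book embedding (binding region included) is taken from Lemma~\ref{lem:M_in_M_times_D^2} applied to the \emph{standard} sphere inside its own $\varepsilon$--partial open book, where the page boundary is the ambient binding pushed to the interface $\{s=-a\}$ and the capping is the tautological inclusion of $B\times\mathbb{D}^2$ into the region $M\times\mathbb{D}^2_a$; the statement is then transplanted by band-connect-summing that partial open book with the one for $\alpha_{stot}$ away from the image (Proposition~\ref{pro:connect_sum_embedding}), using that contact connected sum with the standard sphere changes nothing. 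If you want to save your route, you must either carry out the capping through $\S^{2n-1}\times\mathbb{D}^2_a$ explicitly, or reposition the sub-page so that its boundary sits Liouville-parallel in the collar of the ambient page and is carried by Lemma~\ref{lem:construction_of_isotopy} to the interface $\{s=-a\}$, where a tautological capping becomes available.

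Two smaller points. First, Lemma~\ref{lem:M_in_M_times_D^2} identifies the induced structure with $Ker\{\alpha_{stot}\}$ only up to a Gray contactomorphism of the whole mapping torus, which need not preserve your sub-mapping torus; so item $(2)$ should be checked directly on $\S^1\times\iota(\mathbb{D}^{2n-2})$, where the pulled-back form is $\varepsilon\, d\theta+\mathrm{e}^{c}(\lambda_{std}+dh)$ up to the Darboux correction $dh$ coming from $\iota^*\lambda-\lambda_{std}$. Second, the uniqueness of the contact structure supported by a given open book is a statement about closed manifolds; on the compact mapping torus with boundary you need instead a direct interpolation/Gray--Moser argument with control at the boundary. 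Both of these are fixable; the binding extension is the missing idea.
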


Now that we have clearly stated all three steps needed for the proof of the 
Proposition~\ref{pro:h-principle_weak}  in the form of the Lemmas~\ref{lem:abstract_darboux}, \ref{lem:M_in_M_times_D^2}
and ~\ref{lem:embd_std_sphere_in_nbhd_stot_sphere}, we give the proof of the 
Proposition~\ref{pro:h-principle_weak} assuming the   Lemmas~\ref{lem:M_in_M_times_D^2} 
and ~\ref{lem:embd_std_sphere_in_nbhd_stot_sphere}.

\begin{proof}[Proof of the Proposition~\ref{pro:h-principle_weak}]\mbox{}

We can assume that $(M, \xi)$ is an abstract open book 
$(\mathcal{A}ob(\Sigma, \phi),Ker \{\alpha_{(\Sigma, \phi)}\}).$

We first notice that the Lemma~\ref{lem:abstract_darboux} implies that given an $\varepsilon >0,$
it is sufficient to 
iso-contact embed  $(M, \xi)$ in  the $\varepsilon$--partial open book associated to the 
contact manifold $(M \# \mathbb{S}^{2n-1}, \xi \# \xi_{stot}) = (M ,\xi^{stot}).$

Let $\Sigma_{\theta} = \pi_1^{-1} ( \theta)$, where $\pi_1: \mathcal{M}T(\Sigma, \phi) \rightarrow \S^1$ is
the fibration associated to $\mathcal{A}ob(\Sigma, \phi).$
Now, consider the standard region $\S^1 \times [- \varepsilon, - \varepsilon_0 + \delta] \times M$ 
  of  the
$\varepsilon$--partial open book associated to 
$(M \# S^{2n-1}, Ker \{ \beta = \alpha_{(\Sigma, \phi)} \# \alpha_{stot} \}),$
where the contact form is given by $ \varepsilon d \theta + e^{-r} \beta.$

Observe that if there exists 
a proper symplectic embedding of the band connected sum  $\Sigma_{\theta} \#_b 
\mathbb{D}^{2n-2}$ in 
$\{\theta\} \times (-\varepsilon_0, - \varepsilon_0 + \delta] \times \left( M\# \mathbb{S}^{2n-1}\right)$ for any $\varepsilon_0,$ 
then there exists an iso-contact embedding of
the contact $\mathcal{A}ob(\Sigma \#_b \mathbb{D}^{2n-2}, Ker \{\alpha_{(\Sigma \#_b \mathbb{D}^{2n-2}, \phi \# id)} \})$
in the $\varepsilon$--partial open book associated to $(M \# \mathbb{S}^{2n-1}, \xi \# \xi_{stot}).$


By the Lemma~\ref{lem:M_in_M_times_D^2}, there exists a contact abstract open book embedding of the contact
manifold \\
$(M, Ker \{\alpha \}) = (\mathcal{A}ob(\Sigma, \phi),Ker \{\alpha_{(\Sigma, \phi)}\})$ in the 
$\varepsilon$--partial open book associated to $(M, Ker \{\alpha \}).$  Moreover, the mapping tours
$\mathcal{M}T(\Sigma, \phi)$ is properly embedded close to the convex boundary times $\S^1$  in the 
standard region of the $\varepsilon$--partial open book associated to $(M, Ker \{\alpha \}).$ Furthermore,
the pull-back of the form $\varepsilon d \theta  + e^s \alpha$ induces the contact structure 
$Ker \{\alpha \}$ restricted to $\mathcal{M}T(\Sigma, \phi).$

Also, by the Lemma~\ref{lem:embd_std_sphere_in_nbhd_stot_sphere}, there exists an iso-contact abstract open book
embedding of the standard sphere 
$(\mathbb{S}^{2n-1}, Ker \{\alpha_{std}\}) = (\mathcal{A}ob(\mathbb{D}^{2n-2},id),Ker \{\alpha_{(\mathbb{D}^{2n-2}, id)}\})$  in
the $\varepsilon$--partial open book associated to $(\mathbb{S}^{2n-1},Ker \{\alpha_{stot}  \}).$
Moreover, the mapping tours
$\mathcal{M}T(\mathbb{D}^{2n-2}, id)$ is properly embedded close to the convex boundary times $\S^1$  in the 
standard region of the $\varepsilon$--partial open book associated to $(\mathbb{S}^{2n-1}, Ker \{\alpha_{stot} \}).$ 
Also notice that  the pull-back of the form $\varepsilon d \theta  + e^s \alpha$ induces the contact structure 
$Ker \{\alpha_{std} \}$ restricted to $\mathcal{M}T(\mathbb{D}^{2n-2}, id).$

Hence, by the Proposition~\ref{pro:connect_sum_embedding}, there exists an iso-contact abstract open book
embedding of \\ $(\mathcal{A}ob (\Sigma \#_b \mathbb{D}^{2n-2}, \phi \# id), Ker \{ \alpha \# \alpha_{std} \}) $ in 
the $\varepsilon$--partial open book associated to  $(M \# \mathbb{S}^{2n-1}, \xi \# \xi_{stot}).$

Since the contact abstract open book 
$\mathcal{A}ob(\Sigma \#_b \mathbb{D}^{2n-2}, Ker \{\alpha_{(\Sigma \#_b \mathbb{D}^{2n-2}, \phi \# id)} \})$
is contactomorphic to $(M, \xi)$ and $(M \# \mathbb{S}^{2n-1}, \xi \# \xi_{stot})$ -- by definition --
is $(M, \xi^{stot}),$ the proposition follows.
\end{proof}

We now proceed to establish the 
Lemma~\ref{lem:M_in_M_times_D^2} and the Lemma~\ref{lem:embd_std_sphere_in_nbhd_stot_sphere}.

\subsection{Proof of the Lemma~\ref{lem:M_in_M_times_D^2}}\mbox{}

The purpose of this sub-section is to establish the Lemma~\ref{lem:M_in_M_times_D^2}.

\begin{figure}[ht]
\begin{center}
\psfrag{A}{$\hat{f}_{0}(\Sigma)$}
\psfrag{B}{$f_{0}(\Sigma)$}
\psfrag{C}{$B$}
\psfrag{D}{$ [a,b] \times \mathcal{N}(B) $}
\psfrag{E}[r]{$\{b\} \times \mathcal{N}(B)$}
\psfrag{F}{$\{a\} \times \mathcal{N}(B)$}
\includegraphics[width=5cm,height=4cm]{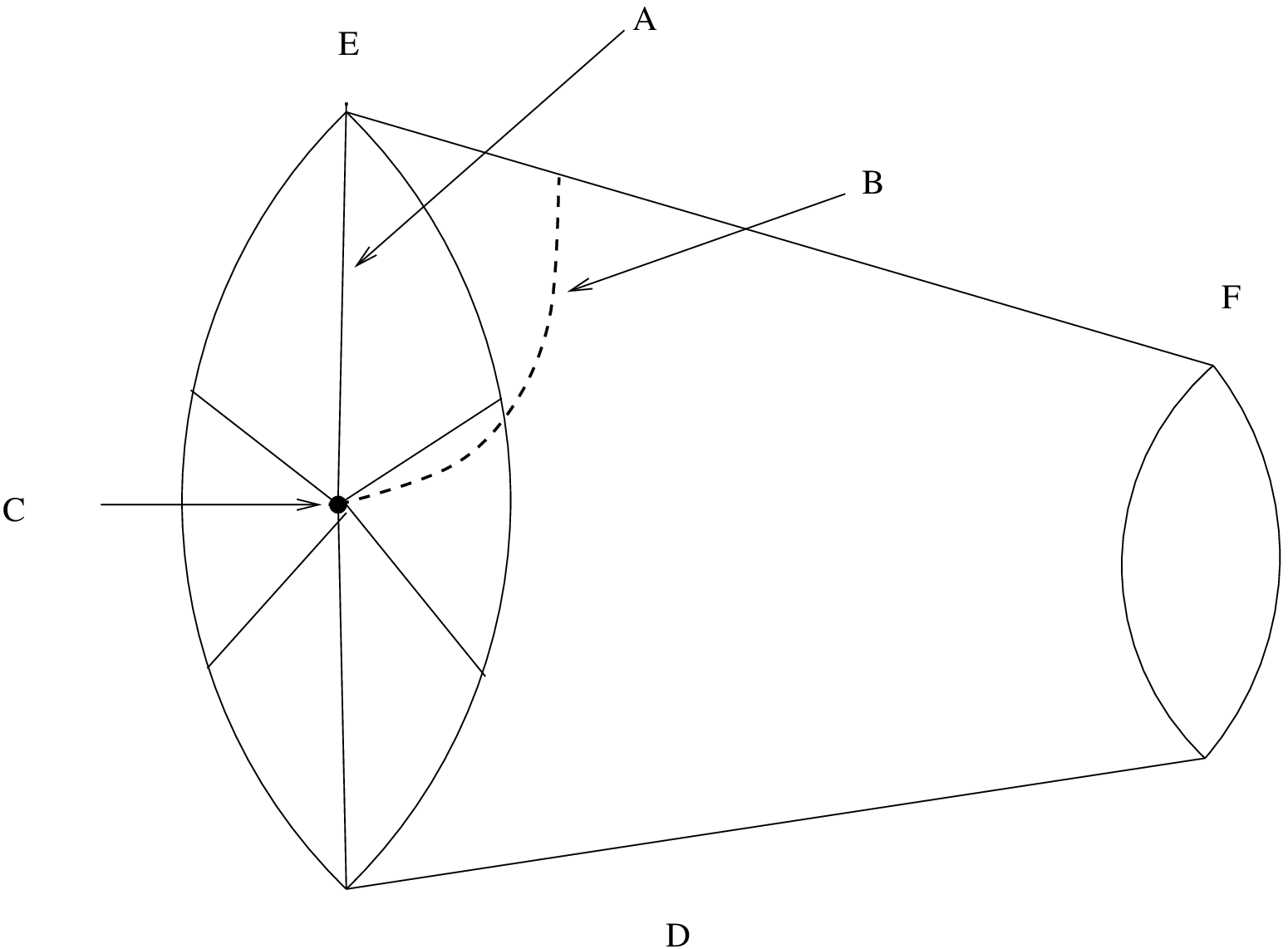}
\caption{The figure depicts a neighborhood $\mathcal{N}(B)$ of  the binding of an open book. We depict 
the vertical line connecting the binding $B$ with the boundary of the circle as a part of the collar 
of $\hat{f}_0(\Sigma)$ while  the dashed curve depicted $f_0(\Sigma)$ obtained after 
pushing $\hat{f}_0(\Sigma)$ in the interior.}
\label{fig:symplectization_binding}
\end{center}
\end{figure}

\begin{lemma}\label{lem:construction_of_isotopy}
Let $(N^{2n-1}, Ker \{\alpha  \}) = (\mathcal{A}ob(\Sigma, \phi), Ker\{\alpha_{(\Sigma,\phi)}\}) $ be a contact abstract open book.  Let $\varepsilon > 0$ be such that $[0,\varepsilon ] \times \partial \Sigma$
is the collar of $\partial \Sigma$ which satisfies the following:
\begin{enumerate}

\item The symplectomorphism $\phi$ is the identity when restricted to this collar. 

\item The form $\alpha_{\Sigma, \phi}$ restricted to this collar is the form 
$e^s \lambda$ for a contact form $\lambda$ defined on 
$\{0\} \times \partial \Sigma.$

\end{enumerate}

 Then,  there exists a  family $f_{(c,t)}$ 
of embeddings of $\Sigma$  in the symplectic manifold $\left([a,b]\times N, d ( e^s \alpha) \right),$  which satisfies the following properties:

 \begin{enumerate}
  \item The family $f_{(c,t)}$ is smooth in both $c$ and $t.$
  
  \item $f_{(c,t)}( \Sigma)$ is a properly embedded symplectic sub-manifold of 
  $\left((a,b] \times N, d (\mathrm{e}^s \alpha)\right)$ for  every $c$ and
   $t.$ 
  
  \item $f_{(c,1)}(\Sigma) = f_{(c,0)}(\Sigma)$ and  
  $f_{(c,1)}^{-1} \circ f_{(c,0)} = \phi.$
  
  \item $\partial f_{(c,t_1)}(x) = \partial f_{(c,t_2)}(x)$ for all $x$ and for any pair of reals
  $t_1,t_2 \in [0,1]$ in the 
  collar neighborhood   $[0, \epsilon] \times \partial \Sigma.$
  
  \item The embedding is such that the complement of the collar
  $f_{(c,t)}\left(\Sigma \setminus (0, \varepsilon] \times \partial \Sigma \right)$ is contained in $ \{c \} \times N.$

  \item The form $f_{(c,t)}^*\left( d (\mathrm{e}^s \alpha)\right)$ restricted to the collar is a part of the symplectization of
  the contact  manifold $(\{0\} \times \partial \Sigma, Ker \{ \mathrm{e}^c \lambda |_{\{0\} \times \partial \Sigma} \}).$  Furthermore,  the primitive of the
  symplectic form  when restricted to the convex boundary 
  $ \{\varepsilon\} \times \partial \Sigma$  is the $1$--form 
  $\mathrm{e}^{b+\varepsilon} \lambda$ and   in a neighborhood of $\{0\} \times \partial \Sigma,$  the primitive is given by   $ \mathrm{e}^s (\mathrm{e}^c \lambda).$

\end{enumerate}
 
\end{lemma}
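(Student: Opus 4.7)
The plan is to start from the natural embedding of $\Sigma$ as a page of the open book $N=\mathcal{A}ob(\Sigma,\phi)$, lift it into the symplectization $\bigl([a,b]\times N,\,d(e^s\alpha)\bigr)$ at height $c$, and then bend the lifted image near the binding $\partial\Sigma=B$ to enforce properties (1)--(6). The family parameter $t$ will correspond to rotating in the mapping-torus $\mathbb{S}^1$-factor, so that $t=0$ and $t=1$ parametrize the same page but differ by the monodromy gluing $(x,1)\sim(\phi(x),0)$.

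On the interior piece $\Sigma_{\mathrm{int}}:=\Sigma\setminus\bigl((0,\varepsilon]\times\partial\Sigma\bigr)$ I would simply set $f_{(c,t)}(x)=(c,[x,2\pi t])$, viewing the image in $[a,b]\times\mathcal{M}T(\Sigma,\phi)\subset[a,b]\times N$. A short computation shows that $d(e^s\alpha)$ pulls back to $e^c\,d\alpha_{(\Sigma,\phi)}$ restricted to the page, which is symplectic on $\Sigma_{\mathrm{int}}$; property~(5) is built in, and property~(3) follows from the mapping-torus gluing. On the collar $[0,\varepsilon]\times\partial\Sigma$ I would adopt the partial-open-book model $\alpha=h_1(r)\lambda+h_2(r)d\theta$ on a tubular neighborhood $B\times\mathbb{D}^2_{\varepsilon}$ of the binding (with $h_1(0)=1$, $h_1'(0)=0$, $h_2(r)=r^2$ near $0$) and use the ansatz
\[
f_{(c,t)}(s,y)=\bigl(\sigma(s),\,y,\,\rho(s),\,2\pi t+\vartheta(s)\bigr)
\]
for smooth functions $\sigma,\rho,\vartheta:[0,\varepsilon]\to\mathbb{R}$ depending smoothly on $c$ (through an additive shift of $\sigma$) and independent of $t$. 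A direct computation then yields
\[
f_{(c,t)}^*\bigl(d(e^s\alpha)\bigr)=e^\sigma\bigl(\sigma'h_1(\rho)+h_1'(\rho)\rho'\bigr)\,ds\wedge\lambda+e^\sigma h_1(\rho)\,d\lambda,
\]
with pullback primitive $e^\sigma h_1(\rho)\lambda+e^\sigma h_2(\rho)\vartheta'\,ds$, and the top power is non-vanishing precisely when $\sigma'h_1(\rho)+h_1'(\rho)\rho'$ does not vanish on $[0,\varepsilon]$.

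The three functions would then be chosen as follows. Take $\rho(s)=0$ and $\sigma(s)=s+c$ on a small interval near $s=0$; using $h_1(0)=1$ and $h_2(0)=0$, this forces the pullback primitive to equal $e^{s+c}\lambda$ there, realizing the concave-end clause of condition~(6), while $\sigma'(0)=1$ and $h_1(0)=1$ ensure non-degeneracy near the binding. At the opposite end $s=\varepsilon$, pick $\sigma(\varepsilon)$, $\rho(\varepsilon)$, and $\vartheta(\varepsilon)$ so that the pullback primitive matches $e^{b+\varepsilon}\lambda$ and the embedding glues $C^\infty$-smoothly to the interior embedding at height $c$. On the intermediate part of $[0,\varepsilon]$, interpolate the triple $(\sigma,\rho,\vartheta)$ while keeping $\sigma'h_1(\rho)+h_1'(\rho)\rho'$ from changing sign; this is possible because the constraint is open, and $\sigma$ may be taken monotone on each of two subintervals (rising to a peak, then falling back to $c$). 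Since at $s=0$ the coordinate $\rho(0)=0$ makes the $\mathbb{D}^2$-angular coordinate degenerate, $f_{(c,t)}(0,y)=(c,y)$ is manifestly $t$-independent, establishing property~(4); smoothness in $(c,t)$ and properties~(1), (2) are built into the construction.

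The principal obstacle I expect is arranging the simultaneous realization of the two prescribed primitive forms at the ends of the collar -- $e^{s+c}\lambda$ near $s=0$ and $e^{b+\varepsilon}\lambda$ at $s=\varepsilon$ -- without violating non-degeneracy anywhere on $[0,\varepsilon]$, and smoothly joining the collar embedding to the constant-height-$c$ interior. This is essentially a prescribed-endpoints interpolation problem for the triple $(\sigma,\rho,\vartheta)$, and the three degrees of freedom are sufficient: $\vartheta$ can absorb an exact-form correction, $\rho$ monotonically interpolates between $0$ (at the binding) and $\varepsilon$ (at the interior junction), and $\sigma$ can be bent to hit both prescribed heights while preserving the sign of the nondegeneracy quantity. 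Once these functions are in hand, the remaining clauses of the lemma follow directly.
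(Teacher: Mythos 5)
Your overall strategy --- embed the rotating family of pages at symplectization level $c$ and then modify the collar inside the binding model $B\times\mathbb{D}^2_{\varepsilon}$ by an explicit ansatz $(\sigma(s),y,\rho(s),2\pi t+\vartheta(s))$ --- is the same as the paper's (there the modification is written as $F(t,x)=(f(t),g(t),x)$ inside the symplectization of the page collar), and your pullback computation is correct. The gap is in how you assign the prescribed data to the two ends of the collar. In the lemma, the end $\{0\}\times\partial\Sigma$, where the primitive is $e^{s}(e^{c}\lambda)$, is the \emph{interior} junction: by property (5) the complement of $(0,\varepsilon]\times\partial\Sigma$ sits in $\{c\}\times N$, so this is where the collar must glue to the level-$c$ page. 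The end $\{\varepsilon\}\times\partial\Sigma$, where the primitive must be $e^{b+\varepsilon}\lambda$, is the free boundary of the page, which has to be pushed all the way up to the top level $b$ over the binding; this is what makes the image properly embedded in $(a,b]\times N$, makes the boundary collar $t$-independent, and is exactly what is used later when the re-embedded mapping torus must lie near the convex end of the $\varepsilon$--partial open book. You instead put the page boundary on the binding at level $\approx c$ (via $\rho\equiv 0$, $\sigma=s+c$ near $s=0$) and demand at the opposite end both that the primitive equal $e^{b+\varepsilon}\lambda$ and that the embedding glue smoothly to the interior at height $c$. These demands are incompatible: gluing at height $c$ forces the conformal factor there to be $e^{c}h_1(\rho(\varepsilon))\le e^{c}<e^{b+\varepsilon}$. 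Worse, with your profile ($\sigma$ rising to a peak and falling back to $c$, $\rho$ monotone from $0$ to $\varepsilon$) the nondegeneracy quantity $\sigma' h_1(\rho)+h_1'(\rho)\rho'$ equals $+1$ near $s=0$ but is strictly negative wherever $\sigma'<0$ (both summands are then $\le 0$, the first $<0$), so it must vanish somewhere and the image fails to be symplectic there; and since your page boundary lies at $\{c\}\times B$, in the interior of $(a,b]\times N$, the properness clause of property (2) fails as well.

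The repair is exactly the paper's choice of profile: keep the level-$c$ gluing at the interior end and let $\sigma$ increase monotonically from $c$ to $b$ as the image moves toward the binding ($\rho$ decreasing to $0$), so that the boundary of the embedded page is $\{b\}\times B$ with induced primitive $e^{b+\varepsilon}\lambda$. With that orientation of the interpolation both summands of $\sigma'h_1+h_1'\rho'$ are nonnegative and can be kept from vanishing simultaneously, so the symplectic condition is automatic, and properties (2), (4), (5), (6) all hold; there is no genuine two-point interpolation obstruction left. A secondary point: your argument gives $t$-independence only on the single slice sent to the binding, whereas the lemma asks for it on a collar; the paper arranges this by first choosing the family of page embeddings $\widehat{f}_{(c,t)}$ to be $t$-independent on a neighborhood of $\partial\Sigma$ (allowed because $\phi$ is the identity there) before performing the collar modification.
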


Before we discuss the formal proof, let us discuss briefly the idea behind the proof.
Given a contact abstract open book $(M, Ker \{ \alpha\})$ with page $\Sigma,$ we know that   $\Sigma$ admits an embedding as a page in $M$ at level $\theta$ for any
$\theta \in S^1.$ Call the image of the embedding as $\Sigma_{\theta}.$ This embedding is symplectic and its boundary is the binding $B$. Hence, for any $c \in [a,b]$ there exists a piece-wise linear embedding of $\Sigma$ in $[a,b] \times M$ consisting of 
$\Sigma_{\theta} \cup [c,b] \times B,$ where we regard $\Sigma_{\theta}$ as 
embedded in $\{c\} \times M.$ Our main observation is that we can smoothen the corner
along the binding $B$ to produce a symplectic embedding in $([a,b] \times M, d(\mathrm{e}^s \alpha)).$

\begin{proof}
 For any $c \in (a, b],$ since $\mathrm{e}^c \alpha$ is supported by the open book decomposition $\mathcal{A}ob(\Sigma, \phi).$ 
This implies that  there exists a family $\widehat{f}_{(c,t)}$ of embedding of $\Sigma$ in $\{c\} \times N$ 
which satisfies the following properties:
 
\begin{enumerate}
\item $\widehat{f}_{(c,t)}^* (\mathrm{e}^c d \alpha)$ is a symplectic form on 
$ \Sigma.$
\item $\widehat{f}_{(c,1)} (\Sigma) = \hat{f}_{(c,0)} (\Sigma)$ and $\widehat{f}_{(c,t_1)}(x) = \widehat{f}_{(c,t_2)}(x)$ 
for every 
$x$ in a collar of $\partial \Sigma$ and for any $t_1,t_2 \in [0,1].$
\item  $\widehat{f}_{(c,0)} \circ \widehat{f}_{(c,1)}^{-1}$ is the symplectomorphism $\phi$ of 
$(\Sigma, \widehat{f}_{(c,0)}^* d ( \mathrm{e}^c \lambda))$ 
 \end{enumerate}

Fix a $t_0 \in [0,1].$
Let us denote by  $\Sigma_{t_0}$ the image $\widehat{f}_{(c,t_0)}(\Sigma).$ Notice that $\Sigma_{t_0}$ is a 
symplectic sub-manifold of $\left([a,b] \times N, d (\mathrm{e}^s \alpha)\right)$ which is contained in $\{c\} \times N.$ Observe that by the definition of 
the abstract open book, we get that the form $e^s \alpha$ restricted to
the collar
$\widehat{f}_{(c,t_0)}([0, \varepsilon] \times \partial \Sigma)$  of $\Sigma_{t_0}$ 
is  pulled back via the symplectomorphism $\widehat{f}_{(c,t_0)}$  
to the $1$--form $\mathrm{e}^r\mathrm{e}^c \lambda,$ where $r \in [0, \varepsilon],$
on the collar $[0, \varepsilon] \times \Sigma.$

We now describe how to  re-embed the collar $[0, \varepsilon] \times \Sigma$ in the symplectic manifold  by an embedding $F$  such that it satisfies the following properties:

\begin{itemize}
 \item  There exists a $\delta \in (0, \varepsilon )$ such that  $F = \widehat{f}_{(c,t_0)}$
 when restricted to $[0, \delta] \times \partial \Sigma .$
 
 \item  The pull-back of the form $ \mathrm{e}^s \alpha$ by $F$ induces the form $e^{b + \varepsilon} \lambda$ on 
 $\{\varepsilon\} \times \partial \Sigma.$
\end{itemize}

In order to achieve this, consider a pair of functions $f: [0, \varepsilon] \rightarrow [a,b]$ and $g: [0, \varepsilon] \rightarrow [0, \varepsilon]$ such that $f$ is constant $c$ near $0$ and increases  to $b$ while $g$ is the identity near $0$ and is the  constant $\varepsilon$ in a small neighborhood of the point $\varepsilon.$ See Figure~\ref{fig:f_and_g}
for graphs of $f$ and $g.$ We now define the embedding $F$ as $F(t, x) = (f(t), g(t), x).$ See Figure~\ref{fig:symplectization_binding}
for a pictorial description of the embedding $F.$

%
%

\begin{figure}[ht]
\begin{center}
\psfrag{f}{$f$}
\psfrag{g}{$g$}
\psfrag{a}[r]{$a$}
\psfrag{b}[r][b]{$b$}
\psfrag{e1}{$\varepsilon $}
\psfrag{O1}[r][b]{$(0,0)$}
\psfrag{O2}[r][b]{$(0,0)$}
\psfrag{e2}[c][b]{$\varepsilon$}
\psfrag{e3}[c][b]{$\varepsilon$}
\includegraphics[width=10cm,height=4cm]{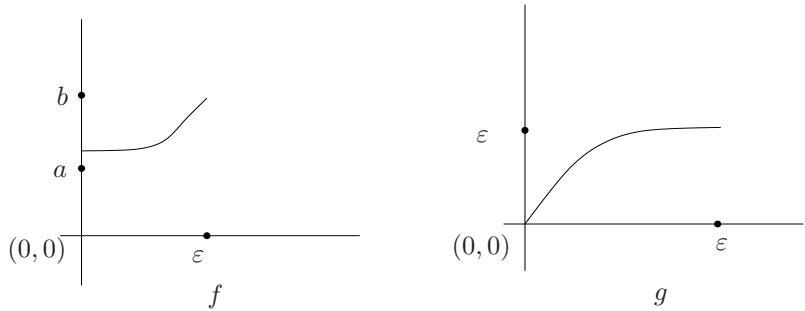}
\caption{The figure depicts graphs of functions $f$ and $g$.}
\label{fig:f_and_g}
\end{center}
\end{figure}

We now define the embedding $f_{(c,t_0)}$ of $\Sigma$ using $F$ and $\widehat{f}_{(c, t_0)}$ as:

$$f_{(c,t_0)}(x) =
\begin{cases}
 F(x), & \text{for} \hspace{0.2cm} x \in [0, \varepsilon] \\
\widehat{f}_{(c, t_0)}, & \text{otherwise.}
\end{cases}
$$

Observe that since $t_0$ is arbitrary, doing the construction parametrically, 
 we get the family $f_{(c,t)}$ from the family $\hat{f}_{(c,t)}$  
with the required properties. This completes our argument.

\end{proof}



\begin{lemma}\label{lem:power_of_given_monodromy_embeds}
 Let $(N, Ker \{\alpha \}) = (\mathcal{A}ob(\Sigma, \phi), Ker\{\alpha_{(\Sigma, \phi)}\})$  be a contact abstract open book.  Let $m = 1$ or $-1.$ Let $(M, Ker\{\alpha_m\})$ be the contact abstract open book $(\mathcal{A}ob(\Sigma, \phi^m), Ker \{\alpha_m = \alpha_{(\Sigma, \phi^m)}\}).$ Let $\mathcal{M}T(\Sigma, \phi^m)$ be the contact mapping torus associated to the contact abstract open 
 book $M.$ Consider the  manifold 
 $(\S^1 \times [a,b] \times N, Ker \{\alpha_K  = K d \theta + \mathrm{e}^s \alpha\}).$ If  $\pi_m: \mathcal{M}T(\Sigma, \phi^m) \rightarrow \S^1$ denotes the bundle projection, then there exists a $K_0 >0$
and a contact embedding of $\mathcal{M}T (\Sigma, \phi^m)$ in 
$(\S^1 \times [a,b] \times N, Ker \{\alpha_K \})$ for all  $K \geq K_0,$ 
which satisfies  the following properties:

\begin{enumerate}
 \item  The following diagram commutes:

$$ \xymatrix{
 \mathcal{M}T(\Sigma, \phi^m)   \ar[rd]^{\pi_m} \ar[r]^F &  
 \S^1 \times [a,b] \times N \ar[d]^{\pi_2}  \\
 & \S^1 .
} $$

 \item For a fixed $c \in (a,b),$ the fiber  $\pi_m ^{-1} ( \theta )$ is a symplectic sub-manifold of the symplectic manifold 
 $([c,b] \times N, d (\mathrm{e}^s \alpha))$ for every $\theta \in \S^1.$

 \item The contact structure induced on the embedded $\mathcal{M}T(\Sigma, \phi^{m})$ by $\alpha_K$ is contactomorphic to
 the contact structure $ Ker \{\alpha_m \}$ restricted to $\mathcal{M}T(\Sigma, \phi^m ) \subset N$ by
 a contactomorphism which is the identity when restricted to $\partial \mathcal{M}T(\Sigma, \phi^{m}).$
 
\item When $m = 1,$ we can choose $K_0$ to be arbitrarily small.

\end{enumerate}

\end{lemma}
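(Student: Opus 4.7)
The strategy is to suspend the parametric family $\{f_{(c,t)}\}$ of symplectic page embeddings from Lemma~\ref{lem:construction_of_isotopy} to produce the mapping-torus embedding, then verify the contact condition and Giroux-identify the induced structure with $\alpha_m$. Concretely, for $m=1$ I would define $F([x,t]) = (e^{2\pi i t},\, f_{(c,t)}(x)) \in \S^1\times[a,b]\times N$ on $\mathcal{M}T(\Sigma,\phi)$; the identity $f_{(c,1)}\circ\phi = f_{(c,0)}$ coming from property (3) of Lemma~\ref{lem:construction_of_isotopy} makes $F$ well-defined modulo the gluing. For $m=-1$ I would use the time-reversed family and set $F([x,t]) = (e^{2\pi i t},\, f_{(c,1-t)}(x))$ on $\mathcal{M}T(\Sigma,\phi^{-1})$; the same identity, read in reverse, gives well-definedness. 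Properties (1) (commutation with the $\S^1$-projection) and (2) (symplectic fibers) are then immediate from the construction together with property (2) of Lemma~\ref{lem:construction_of_isotopy}.

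The next step is to compute the pullback. Using coordinates $(x,t)$ on $\mathcal{M}T(\Sigma,\phi^m)$ and decomposing the pullback of the Liouville $1$-form $e^s\alpha$ into its page- and $t$-components, one obtains $F^*\alpha_K = (K + \mu(x,t))\,dt + \eta_t$, where $\eta_t$ is a family of $1$-forms on $\Sigma$ with $d_\Sigma\eta_t$ equal to a symplectic form $\omega_t$ on the page (by property (2) of Lemma~\ref{lem:construction_of_isotopy}), and $\mu$ is a smooth function. A short computation then yields $F^*\alpha_K \wedge (dF^*\alpha_K)^{n-1} = \{(K + \mu)\omega_t^{n-1} + (n-1)\,\eta_t\wedge\omega_t^{n-2}\wedge(d_\Sigma\mu + \partial_t\eta_t)\}\wedge dt$. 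Since $\omega_t^{n-1}$ is a positive top form on $\Sigma$, for $K$ large enough relative to the $C^0$ bounds of $\mu$ and the cross-term the right side is a positive top form, so $F^*\alpha_K$ is contact.

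To obtain (3), I would observe that by property (6) of Lemma~\ref{lem:construction_of_isotopy} the form $F^*\alpha_K$ coincides with $\alpha_m$ up to rescaling on a collar of $\partial\mathcal{M}T(\Sigma,\phi^m)$, while on the interior both forms are supported by the same abstract open book $\mathcal{A}ob(\Sigma,\phi^m)$ in the Giroux sense. The standard uniqueness of contact structures supported by a fixed open book (see \cite{Ko}) then yields a contactomorphism, which a relative Gray-stability argument allows me to take to be the identity on the boundary. For the sharpening (4), when $m=1$ the suspension direction and the natural Reeb-like direction of the mapping torus inside $N$ have the same orientation, so the cross-term $d_\Sigma\mu + \partial_t\eta_t$ contributes with the same sign as $(K+\mu)\omega_t^{n-1}$; hence positivity of $F^*\alpha_K \wedge (dF^*\alpha_K)^{n-1}$ holds for every $K > 0$, not only for $K$ large.

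The main obstacle will be the sign analysis in the final step: showing that for $m=1$ the cross-term does not obstruct positivity for arbitrarily small $K$. This requires careful control of the $t$-dependence of $\eta_t$ near the binding, where by property (6) of Lemma~\ref{lem:construction_of_isotopy} the Liouville form is fixed in the normal form $e^s\lambda$, and away from the binding, where the embeddings $f_{(c,t)}$ differ only by the contact rotation implementing the monodromy $\phi$. With the sign right, essentially any positive $K$ suffices; this asymmetry between $m=1$ and $m=-1$ is precisely what will allow the choice of arbitrarily small $K_0$ later in the proof of Proposition~\ref{pro:h-principle_weak}.
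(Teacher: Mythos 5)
Your construction is the paper's: suspend the family $f_{(c,t)}$ from Lemma~\ref{lem:construction_of_isotopy} (time-reversed when $m=-1$) to embed $\mathcal{M}T(\Sigma,\phi^m)$, use property (3) of that lemma for well-definedness, and split the volume form of $F^*\alpha_K$ into the $K\,d\theta$-contribution (positive because the fibers are symplectic) and the $F^*(\mathrm{e}^s\alpha)$-contribution (bounded independently of $K$, and non-negative exactly when $m=1$), which yields contactness for $K\ge K_0$ in general and for every $K>0$ when $m=1$. The only point where you genuinely diverge is item (3): the paper identifies the induced structure with $Ker\{\alpha_m\}$ by a Gray-stability argument run along the parameter $c$ of the family of embeddings $F_c$, staying entirely within the already-constructed data, whereas you invoke uniqueness of contact structures supported by a fixed open book together with a relative Gray argument. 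That uniqueness statement is formulated for closed manifolds carrying the full open book (binding included), so to make your route precise you would first cap off the embedded mapping torus with the standard binding neighborhood $\partial\Sigma\times\mathbb{D}^2$ --- property (6) of Lemma~\ref{lem:construction_of_isotopy} is what guarantees the boundary data match --- and you would still owe an argument making the resulting contactomorphism the identity on $\partial\mathcal{M}T(\Sigma,\phi^m)$; the paper's parameter-in-$c$ Gray argument delivers this more directly, while your version buys independence from the particular interpolating family at the cost of these extra steps.
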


\begin{proof}

Our first task is to produce an embedding which satisfies the first two properties stated in the statement
of the lemma. In order to achieve this, let us fix an $m \in \{-1, 0, 1\}.$ Now, consider the family $f_t$
of embeddings of $\Sigma$ in the symplectic manifold $([a,b] \times N,d ( \mathrm{e}^s \alpha) ) $ defined 
as follows:

\begin{equation*}
f_t =   
\begin{cases}

 f_{(c,t)} , &  t \in [0, 1] \hspace{0.2cm} \text{and} \hspace{0.2cm} m=1 \\ 
 f_{(c,1-t)} , &  t \in [0,1] \hspace{0.2cm} \text{and} \hspace{0.2cm} m = -1.
 \end{cases}
\end{equation*}

 The family   $f_{(c,t)},$  is the family constructed in the Lemma~\ref{lem:construction_of_isotopy} earlier. 

Consider the embedding of $I \times \Sigma \subset I \times [a,b] \times N$ given by 
$ F(t, x) = (t, f_t(x)).$  Regard $\S^1 \times [a,b] \times N $ as the quotient of $[0,1] \times [a,b] \times N$ 
where  we identify $\{0\} \times [a,b] \times N$ to $\{1\} \times [a,b] \times N.$ Notice that 
due to the third property associated to the family $f_{(c,t)}$ constructed in the Lemma~\ref{lem:construction_of_isotopy},  
we get that $F$ naturally induces an embedding of 
$\mathcal{M}T(\Sigma, \phi^m)$ in $S^1 \times [a,b] \times N$ which satisfies the first two properties mentioned in
the statement. Let us  denote this induced embedding of $\mathcal{M}T(\Sigma, \phi^m)$ in 
$\S^1 \times [a,b] \times N$ by $F_c.$


Next, observe that $F^* (K d \theta)$ is a $1$-- form on $\mathcal{MT}(\Sigma, \phi)$
 which is transverse to the
fiber of the fibration $\pi_m.$ Since we have already  established that  the fibers of 
$\mathcal{M}T(\Sigma, \phi)$ are symplectic sub-manifolds of $([a,b] \times N, d (\mathrm{e}^s \alpha )),$ 
it follows  that for a sufficiently large $K_0$ the pull-back of  the form 
$ K d \theta + e^s \alpha$ by $F$
induces a contact structure on $\mathcal{MT}(\Sigma, \phi^m)$

We now focus on our claim that $K$ can be chosen to be arbitrary small in case $m=1.$  
Observe  that in the calculation of the volume form  
$F^* (K d\theta +  e^s \alpha) \wedge \left(d F^*( K d \theta + e^s \alpha ) \right)^{n-1}$ on $\mathcal{M}T(\Sigma, \phi),$ we get
 two non-zero terms $ F^*(e^s \alpha) \wedge (F^*(d( e^s \alpha))^{n-1}$ and 
 $ F_m^*(K d \theta) \wedge (F^*( d e^s \alpha))^{n-1}.$ 

Since the second term is always positive for any $K$ positive, 
we get that whenever the first term is non-negative,  
the pull-back form defines a contact form on $\mathcal{MT}(\Sigma, \phi)$ for
an arbitrary $K$ positive. Notice that when $m =1$ the term $ F^*(e^s \alpha) \wedge (F^*(d( e^s \alpha))^{n-1}$
is positive and hence, we get what we required.

So far, we have produced a contact embedding of $M$ in $(\S^1 \times [a,b] \times N, d \theta + \mathrm{e}^s \alpha).$
Hence, in order to establish the lemma, we  need to show that the contact structure induced by the form
$F^*(d \theta + \mathrm{e}^s \alpha)$  on $\mathcal{M}T(\Sigma, \phi^m)$ is contactomorphic to the
contact structure $Ker\{\alpha_m\}$ restricted to $\mathcal{M}T(\Sigma, \phi^m).$

In order to see this,  we observe that the embedding $F_c$ is constructed  using the family $f_{(c,t)}$ of embeddings of
$\Sigma$ in the symplectic manifold $([a,b] \times N, d (\mathrm{e}^s \alpha))$ as in 
the Lemma~\ref{lem:construction_of_isotopy} is smooth in the parameter $c.$
This follows from  the first property listed in the statement of the
Lemma~\ref{lem:construction_of_isotopy}. 
Observe that by varying $c \in (a,b]$ and pulling back the 
form $K d \theta + \mathrm{e}^s \alpha$ via the family $F_c$ of embeddings,  we can produce a $1$--parameter
family $\alpha_t, t \in (a,b]$ of contact forms on $\mathcal{M}T(\Sigma, \phi_m)$ such that 
the form $\alpha_b$ is the contact form $e^{b} \alpha$ restricted to $\mathcal{M}T(\Sigma, \phi^m).$
It follows from the Gray's stability theorem~\cite[Theorem:2.2.2]{Ge} that the induced contact structure 
via the embedding $F_c$ is contactomorphic to $ker\{ \alpha \}$ restricted to $\mathcal{M}T(\Sigma, \phi^m).$

This establishes the lemma.
\end{proof}



We are now in a position to establish the Lemma~\ref{lem:M_in_M_times_D^2}.

 \begin{proof}[Proof of the Lemma~\ref{lem:M_in_M_times_D^2}]\mbox{}
  
  Notice that it follows from the Lemma~\ref{lem:power_of_given_monodromy_embeds} applied in 
  the case when $m = 1 $  that  for a
  given $\varepsilon,$ an arbitrary pair of reals $0 <a < b,$ 
   there is a proper iso-contact
  open book embedding of $\mathcal{M}T(\Sigma, \phi)$ inside the contact
  manifold $(\S^1 \times [a,b] \times M, \varepsilon  d \theta + e^s \alpha).$

  Observe  this clearly implies that the $\varepsilon$--partial open book admits an iso-contact
  abstract open book embedding $F$ of $(M, \alpha)$ as desired.
  
  \end{proof}

\subsection{Proof of the Lemma~\ref{lem:embd_std_sphere_in_nbhd_stot_sphere}}\mbox{}

Let us now establish the Lemma~\ref{lem:embd_std_sphere_in_nbhd_stot_sphere}.

\begin{proof}[Proof of the Lemma~\ref{lem:embd_std_sphere_in_nbhd_stot_sphere}]\mbox{}

To begin with  observe that it follows from the Lemma~\ref{lem:M_in_M_times_D^2} that
there exist a contact abstract open book embedding of the standard
contact $(2n-1)$--sphere
$(\mathbb{S}^{2n-1}, Ker\{\alpha_{std}\}) = (\mathcal{A}ob(\mathbb{D}^{2n-2}, id),
\alpha_{(\mathbb{D}^{2n-2}, id)})$ in any $\varepsilon$--partial open book 
$\S^{2n-1} \times \mathbb{D}_{\varepsilon}$ associated to $(\S^{2n-1}, \alpha_{std}).$
Call this embedding $F.$

Next, produce a generalized contact abstract open book -- say $N$ --by performing the band connected
sums of the pages of the $\varepsilon$--partial open book associated to 
$(\S^{2n-1},Ker\{\alpha_{std}\})$ and the $\varepsilon$--partial book associated to
$(\S^{2n-1},Ker\{\alpha_{stot}\})$ which satisfies the following:

We  choose the neighborhood
of the page $P$ of the $\varepsilon$--partial open book associated to
$(\S^{2n-1}, Ker \{\alpha_{std}\})$ to  
perform the band connected sum to be disjoint from the
image  $F(\S^{2n-1}) \cap P.$ 

It now follows from  the Proposition~\ref{pro:connect_sum_embedding} that there exists
an abstract open book embedding of $(\S^{2n-1}, \alpha_{std})$ in the
$\varepsilon$--partial open book associated to 
$(\S^{2n-1} \# \S^{2n-1}, Ker\{\alpha \# \alpha_{stot}\}).$ This clearly
implies the lemma.


\end{proof}

The only proposition left to establish that is used in the proof of the Theorem~\ref{thm:h-principle} is the
Proposition~\ref{pro:ot_sphere_in_stand_sphere}. In the next sub-section we establish this.

\subsection{Proof of the Proposition~\ref{pro:ot_sphere_in_stand_sphere}}\mbox{}
 
Before we establish this proposition, we would like to point out some
historical developments  related to embedding of overtwisted contact spheres in standard contact spheres. 

 K. Niederkr\"uger and  F. Presas pointed out to  us that the Proposition~\ref{pro:ot_sphere_in_stand_sphere} follows by a straight forward generalization
 of  Example 1.(b) given in \cite{NP}. This fact was known to them when they
wrote the article \cite{NP}.

Embedding of the standard overtwisted sphere in the standard $5$--sphere was
also established in \cite{Mr1} by A. Mori.  This was then generalized by J. Etnyre
and R. Fukuwara in \cite{EF}, where they showed that any overtwisted sphere
embeds in the standard contact $S^5.$

\begin{proof}[Proof of the Proposition~\ref{pro:ot_sphere_in_stand_sphere}]\mbox{}

Consider the standard  contact sphere $(\mathbb{S}^{2n-1}, \xi_{std})$ as the contact abstract open book
$\mathcal{A}ob(T^*\mathbb{S}^{n-1}, \tau).$ Observe that for any large $K,$ there is an iso-contact embedding
of the $K$--partial open book associated to 
$(\mathbb{S}^{2n-1}, Ker \{\alpha_{std} \})$ in 
the standard contact sphere $\S^{2n+1}.$ This is 
because the standard $\mathbb{S}^{2n-1}$ appears as the binding of the trivial open book supporting the
standard contact form on $\S^{2n+1}.$ 

Since $T^*\mathbb{S}^{n-1}$ is a page of the standard open book of $\S^{2n-1}$, 
it follows from the Lemma~\ref{lem:power_of_given_monodromy_embeds}
applied in the case when $\phi$ is $\tau$ and $m = -1,$  that there is an iso-contact abstract
open book embedding of $(\mathbb{S}^{2n-1}, \xi_{stot})$ in the $K$--partial open book associated to 
$(\mathbb{S}^{2n-1}, Ker \{\alpha_{std}\})$ for a very large $K$. Hence the Proposition.

\end{proof}

In the next couple of sections, we   will give applications of the 
Theorem~\ref{thm:h-principle}.

\section{Contact embedding of $3$-manifolds in the standard contact $\S^5$}

The purpose of this section is to show that every contact $3$--manifold $(M, \xi)$ contact embeds in
$(\S^5, \xi_{std}),$ provided the first Chern class of $\xi$ is zero and $M$ has
no $2$-torsion in $H^2(M, \mathbb{Z}).$ The result essentially follows from 
\cite[Theorem:1.20]{EF} and the Proposition~\ref{pro:h-principle_weak}. However, for the
sake of completion, we provide a slightly more detailed argument. We begin this section by
reviewing a few fact about homotopy classes of plane fields on an orientable $3$--manifold.

\subsection{Homotopy classes of oriented plane fields on orientable $3$-manifolds}\label{sec:d_3}
\mbox{}

Let $\xi$ be an oriented
$2$--plane field on a closed oriented $3$--manifold $M.$
Recall that  any two such plane fields are homotopic over the $1$--skeleton of a triangulation  of
$M.$ Gompf in \cite{Go} established that when $M$ has no two torsion in $H^2(M, \mathbb{Z}),$ 
the first Chern class $c_1(\xi)$ completely determines homotopy of plane fields over the  $2$--skeleton.
See \cite[Theorem:4.5]{Go}. 

It also follows from  \cite[Theorem:4.5]{Go} that if $c_1(\xi) = 0,$ then homotopy over 
the $3$--skeleton is completely determined by the $3$--dimensional invariant 
$d_3(\xi),$ which is defined as follows:

It was shown in \cite{Go} that it is possible to choose an almost complex manifold $(X,J)$ whose
complex tangencies are $(M, \xi).$  Given this one defines $d_3(\xi)$ as:

$$ d_3 (\xi) = \frac{1}{4}\left( C_1^2(X,J) - 3 \sigma (X) - 2(\chi (X) -1)\right).$$

We would like to point out that this formula is slightly different from the one given in \cite{Go}, as
we are subtracting $1$ from the Euler characteristic of $X$ in the formula. This is just to ensure that the formula for $d_3$ is additive when one considers the connected sums. More precisely,

Let $(M_1, \xi_1)$ be a contact manifold with $c_1(\xi_1) = 0$ and $(M_2, \xi_2)$ be another contact
manifold with $c_1(\xi_2) = 0,$ then for the contact connected sum $(M_1 \# M_2, \xi_1 \# \xi_2),$ 
we have

$$ d_3(\xi_1 \# \xi_2) = d_3(\xi_1) + d_3(\xi_2) . $$

To begin with, we need the following result of Etnyre and Fukuwara established in \cite{EF}. For the sake of
completeness, we will provide a short sketch of the proof of this result here.

\begin{theorem}[Etnyre and Fukuwara]\label{thm:Etnyre_Fukuwara}
\mbox{}

\noindent Let $M$ be a closed $3$--manifold. If $M$ is orientable,  
then there exists an embedding of $M$ in $\S^5$ such that  the contact structure  $\xi_{std}$
on $\S^5$ induces a contact structure on $M.$
\end{theorem}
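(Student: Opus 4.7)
The plan is to use open book decompositions, realizing both $M$ and $(\S^5,\xi_{std})$ in compatible open book forms, and constructing the desired embedding as a contact open book embedding in the sense of Definition~\ref{def:contact_abstract_open_book_embedding}.

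First I would invoke a refinement of Alexander's theorem --- for instance, via Rudolph's work on planar open books, or Hilden--Lozano--Montesinos in the setting of branched covers --- to write $M=\mathcal{A}ob(\Sigma,\phi)$, where $\Sigma$ is a planar surface, that is, a $2$-disk with finitely many open subdisks removed, and $\phi$ is a product of Dehn twists along simple closed curves in $\Sigma$. On the ambient side, Example~\ref{exm:examples_of_ob}(1) identifies $(\S^5,\xi_{std})$ with the contact abstract open book $\mathcal{A}ob(\mathbb{D}^4,id)$ whose page is the standard symplectic $4$-ball $(\mathbb{D}^4,\omega_{std})$.

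The heart of the argument is then to produce (i) a proper symplectic embedding $\iota:\Sigma\hookrightarrow(\mathbb{D}^4,\omega_{std})$, and (ii) an extension of the monodromy $\phi$ to a compactly supported symplectomorphism $\tilde{\phi}$ of $(\mathbb{D}^4,\omega_{std})$ satisfying $\tilde{\phi}\circ\iota=\iota\circ\phi$. Step (i) is straightforward since $\Sigma$ is planar: a small generic perturbation of an embedding of $\Sigma$ into the plane $\{y_1=y_2=0\}\subset\mathbb{D}^4$ produces a proper embedding whose tangent planes are symplectic. For step (ii), each simple closed curve $\gamma\subset\Sigma$ in the factorization of $\phi$ bounds an embedded Lagrangian disk in $\mathbb{D}^4$ (using that $\mathbb{D}^4$ is simply connected and applying Weinstein's Lagrangian neighborhood theorem after a suitable symplectic isotopy). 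A Weinstein neighborhood of such a Lagrangian disk is symplectomorphic to $T^*\mathbb{D}^2$, on which the generalized Dehn twist of Subsection~\ref{subsec:Giroux_correspondence} extends the Dehn twist of $\phi$ along $\gamma$ to a compactly supported symplectomorphism of $\mathbb{D}^4$. Composing these extensions over the factorization of $\phi$ yields $\tilde{\phi}$. The pair $(\iota,\tilde{\phi})$ then produces a contact open book embedding of $\mathcal{A}ob(\Sigma,\phi)$ into $\mathcal{A}ob(\mathbb{D}^4,\tilde{\phi})$, hence an iso-contact --- and in particular contact --- embedding of $M$ into the latter.

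It remains to identify $\mathcal{A}ob(\mathbb{D}^4,\tilde{\phi})$ with $(\S^5,\xi_{std})$. For this I would appeal to Gromov's theorem that the group $\operatorname{Symp}_c(\mathbb{D}^4,\omega_{std})$ of compactly supported symplectomorphisms of the standard $4$-ball is connected; thus $\tilde{\phi}$ is symplectically isotopic through compactly supported symplectomorphisms to the identity, and so $\mathcal{A}ob(\mathbb{D}^4,\tilde{\phi})$ is contactomorphic to $\mathcal{A}ob(\mathbb{D}^4,id)=(\S^5,\xi_{std})$. Composing with this contactomorphism yields the desired smooth embedding of $M$ in $\S^5$ along which $\xi_{std}$ restricts to a contact structure on $M$.

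The principal difficulty will be the extension in step (ii): given a prescribed factorization of $\phi$ into Dehn twists, one must construct the Lagrangian disks in $\mathbb{D}^4$ bounding the corresponding curves in $\iota(\Sigma)$ so that their Weinstein neighborhoods are mutually disjoint away from $\iota(\Sigma)$, and verify that the generalized Dehn twists assemble into a well-defined compactly supported symplectomorphism of $\mathbb{D}^4$ restricting exactly to $\phi$ along $\iota(\Sigma)$. The identification of the ambient contact structure with $\xi_{std}$ via $\pi_0\operatorname{Symp}_c(\mathbb{D}^4)=0$ is standard once $\tilde{\phi}$ is in hand, and the rest of the argument relies on the open book embedding machinery already established in Section~\ref{sec:prelim}.
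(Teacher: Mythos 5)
Your route is genuinely different from the paper's (the paper embeds $M$ in a tubular neighborhood of the standard $\S^3\subset(\S^5,\xi_{std})$ via a Hilden--Lozano--Montesinos branched covering $f_1:M\to\S^3$ with branch locus transverse to $\xi_{std}$, and quotes the computation of \cite{EF} that $F_\varepsilon^*(\alpha_{std}+r^2d\theta)$ is contact for small $\varepsilon$), but your step (ii) has a genuine gap, and it is not a repairable technicality. The generalized Dehn twist of Subsection~\ref{subsec:Giroux_correspondence} is a compactly supported symplectomorphism of $T^*\S^n$ along a \emph{closed} Lagrangian sphere; there is no analogous compactly supported twist carried by a Weinstein neighborhood of a Lagrangian \emph{disk} alone which restricts to the surface Dehn twist along $\gamma$. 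The correct local model (used in \cite{EL}) lives on a neighborhood of the union of the Lagrangian disk with an annulus around $\gamma$ in $\iota(\Sigma)$ and requires a compatibility between the disk framing and the page framing of $\gamma$; moreover such Lagrangian disks need not exist at all: their boundaries must have zero action with respect to the Liouville primitive, $(\mathbb{D}^4,\omega_{std})$ contains no Lagrangian spheres to twist along instead, and "simply connected plus Weinstein neighborhood theorem" only produces smooth disks, not Lagrangian ones with prescribed boundary and framing.

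The decisive objection is that your argument, as written, proves too much. It applies to an arbitrary planar abstract open book $\mathcal{A}ob(\Sigma,\phi)$ and would produce an \emph{iso-contact} embedding of the supported contact structure into $(\S^5,\xi_{std})$. Since every overtwisted contact structure on a closed $3$--manifold is supported by a planar open book, including overtwisted structures with $c_1\neq 0$, this would contradict Kasuya's obstruction \cite{Ka} quoted in this paper (iso-contact embeddings into $(\S^5,\xi_{std})$ force $c_1=0$). Hence the required Lagrangian disks and sign-correct extensions of the Dehn twists must fail to exist for some factorizations, and your proposal gives neither a criterion for when they exist nor a choice of open book for which existence can be verified; this is exactly the hard point, and it is why \cite{EF} and the present paper argue through branched coverings and transverse branch loci instead. (A smaller issue: a generic small perturbation of a surface lying in the Lagrangian plane $\{y_1=y_2=0\}$ is not symplectic --- the locus where $\omega$ degenerates on the tangent planes is codimension one, so it generically persists along curves --- so step (i) also needs a different construction, e.g.\ pieces of braided or holomorphic curves with boundary a transverse link in $\S^3$. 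Your final step, $\pi_0\mathrm{Symp}_c(\mathbb{D}^4,\omega_{std})=0$, is fine once a compactly supported exact extension $\tilde\phi$ is in hand.)
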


\begin{proof}
 To begin with, we  observe  that if there exists an embedding $F:M \rightarrow \S^3 \times D^2$ given 
 by $F(x) = (f_1(x), f_2(x))$ which satisfies the
 following properties:
 
\begin{enumerate}
 \item The map $f_1: M \rightarrow \S^3$ is a branch covering,
 \item The branch locus $L$ in $\S^3$  for the branch cover $f_1:M \rightarrow \S^3$  is transversal to the 
 standard contact structure on $\S^3.$
\end{enumerate}
 
Then,  there exists an $\varepsilon_0 >0$ such that for all $\varepsilon$ less than $\varepsilon_0,$ the embedding
$F_{\varepsilon}:M^3 \rightarrow \S^3 \times D^2$ given by $F_{\varepsilon}(x) = (f_1(x), \varepsilon f_2(x))$ is 
a contact embedding of $M$ in $(\S^3 \times D^2, \{\alpha_{std} + r^2 d \theta\} = 0).$ See \cite{EF} for the
computation establishing that the  pulled back form $F_{\varepsilon}^*(\alpha + r^2 d \theta)$, in fact, induces
a contact structure on $M.$

Now, the Remark $3$ on the page 375 of \cite{HLM}  and the fact that transversality is a
generic property implies that there exists an embedding of $M$ in $\S^3 \times D^2$ satisfying the 
two properties mentioned above. This clearly implies that in an arbitrarily 
small neighborhood of contact $(\S^3, \xi_{std})$ inside $(\S^5, \xi_{std})$  admits an embedding of $M$ such that 
it is a contact embedding.
 
\end{proof}

We are now in a position to  prove   the Theorem~\ref{thm:cont_3-fold_in_S^5}. 
Recall that the Theorem~\ref{thm:cont_3-fold_in_S^5} states that  a necessary and sufficient condition for 
an iso-contact embedding of a contact $3$--manifold $(M, \xi)$ in $(\S^5, \xi_{std})$ is that
$c_1(\xi) = 0$ provided $M$ has no
$2$--torsion in $H^2(M,\mathbb{Z}).$ In case, $M$ has a $2$-torsion in $H^2(M, \mathbb{Z})$, 
the statement claims that 
there is a homotopy class $[\xi]$ of  plane fields  such that $M$ with every contact structure homotopic to
a plane field in the class $[\xi]$ over the $2$--skeleton of $M$ admits an iso-contact embedding in $(\S^5, \xi_{std}).$


\begin{proof}[ Proof of the Theorem~\ref{thm:cont_3-fold_in_S^5}] \mbox{}

We know from \cite{Ka} that $c_1(\xi) = 0$ is a necessary condition for 
having an iso-contact embedding of any contact $(M, \xi)$ in $(\S^5, \xi_{std}).$
We  know from  the Theorem~\ref{thm:Etnyre_Fukuwara} that there exist a contact structure $\eta$ on 
every $3$--manifold $M$ with $c_1(\eta) = 0$ such that $(M, \eta)$ admits an iso-contact embedding in 
$(\S^5, \xi_{std}).$

In case, $M$ has no $2$--torsion in $H^2(M, \mathbb{Z}),$ it follows from the \cite[Theorem:4.5]{Go} 
that every overtwisted  contact structure $\eta_2^{ot}$ on $M$ which is homotopic to $\eta$ over a
$2$--skeleton of $M$ can be obtained by making a contact connected sum of $M$ with a suitably chosen 
overtwisted $\S^3.$ We already know from \cite[Theorem:1:20]{EF} that every contact
$\S^3$ embeds in $(\S^5, \xi_{std}).$ Hence, we conclude that if $(M, \eta)$ iso-contact embeds in 
$(\S^5, \xi_{std}),$ then so does $(M, \eta_2^{ot})$ provided $\eta^{ot}$ is an overtwisted contact
structure on $M$ which is homotopic to $\eta$ over the $2$--skeleton of $M.$ But, this implies that
every $(M, \eta^{ot})$ iso-contact embeds in $(\S^5, \xi_{std}),$ provided the first
Chern class of $\eta^{ot}$ is zero.
The case of no $2$--torsion in $H^2(M,\mathbb{Z})$ is now a straightforward consequence of the 
Corollary~\ref{thm:h-principle}.

In case, $M$ has a $2$--torsion in $H^2(M, \mathbb{Z})$ -- by an argument
similar to the one discussed above  -- it is clear that every overtwisted contact structure 
$\xi^{ot}$ on $M$ such that $\xi^{ot}$ is homotopic to $\eta$ as an almost contact plane field over a 
$2$--skeleton on $M$ admits
iso-contact embedding in $(\S^5, \xi_{std}).$  Again, applying the Theorem~\ref{thm:h-principle}, we 
conclude that every contact structure homotopic as a plane field over $2$--skeleton to $\eta$ admits
an iso-contact embedding in $(\S^5, \xi_{std}).$ This completes our argument.

\end{proof}

 \section{Embeddings of simply connected $5$--manifolds in $(\S^7, \xi_{std})$}\label{sec:5-manifold}
 
We begin this section by observing the following:

\begin{proposition}\label{pro:sphere_iso_cont_embedding}
 Let $\xi$ be a contact structure on $\mathbb{S}^{2n-1}.$ If $\xi$ is co-orientable and
 homotopic as an almost-contact structure to the standard contact structure on $\mathbb{S}^{2n-1}$, then
 $(\mathbb{S}^{2n-1}, \xi)$ admits
 an iso-contact embedding in $(\mathbb{S}^{2n+1}, \xi_{std}).$ In particular, every contact
 $(\S^5, \xi)$ iso-contact embeds in $(\S^7, \xi_{std}).$
\end{proposition}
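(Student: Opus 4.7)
The plan is to apply Proposition~\ref{cor:h-principle_for_embedding_in_spheres} essentially directly. First, consider the standard equatorial inclusion $\iota : \mathbb{S}^{2n-1} \hookrightarrow \mathbb{S}^{2n+1}$ given by $(z_1,\dots,z_n) \mapsto (z_1,\dots,z_n,0)$ with respect to the identifications $\mathbb{S}^{2n-1} \subset \mathbb{R}^{2n}$ and $\mathbb{S}^{2n+1} \subset \mathbb{R}^{2n+2}$. The pullback of $\sum_{i=1}^{n+1}(x_i\,dy_i - y_i\,dx_i)\big|_{\mathbb{S}^{2n+1}}$ under $\iota$ is exactly the standard contact form on $\mathbb{S}^{2n-1}$, so $\iota$ realizes $\mathbb{S}^{2n-1}$ as a contact submanifold of $(\mathbb{S}^{2n+1}, \xi_{std})$ with induced contact structure precisely $\xi_{std}$. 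Moreover, since any embedding of a closed orientable odd-dimensional manifold in a sphere has trivial normal bundle (the Euler class of a rank-$2$ oriented bundle on $\mathbb{S}^{2n-1}$ vanishes), the normal bundle of $\iota$ is trivial.

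Now let $\xi$ be any co-orientable contact structure on $\mathbb{S}^{2n-1}$ which is almost-contact homotopic to $\xi_{std}$. The hypothesis of Proposition~\ref{cor:h-principle_for_embedding_in_spheres} is then satisfied: the equatorial embedding $\iota$ is a contact embedding of $\mathbb{S}^{2n-1}$ in $(\mathbb{S}^{2n+1}, \xi_{std})$ whose induced contact structure $\xi_{std}$ is, by assumption, almost-contact homotopic to $\xi$. The proposition (whose proof in the paper ultimately rests on Theorem~\ref{thm:h-principle}) then produces the desired iso-contact embedding of $(\mathbb{S}^{2n-1},\xi)$ in $(\mathbb{S}^{2n+1}, \xi_{std})$.

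For the ``in particular'' assertion about $\mathbb{S}^5$, it remains to verify that every contact structure on $\mathbb{S}^5$ is almost-contact homotopic to $\xi_{std}$. This follows from the standard classification of almost contact structures on a closed simply-connected spin $5$--manifold $M$: such structures are classified up to homotopy by their first Chern class $c_1 \in H^2(M; \mathbb{Z})$ together with a secondary invariant that is automatic once the primary obstructions vanish. Since $H^2(\mathbb{S}^5; \mathbb{Z}) = 0 = H^4(\mathbb{S}^5;\mathbb{Z})$, the obstruction-theoretic invariants distinguishing almost contact structures on $\mathbb{S}^5$ all vanish, and there is a unique homotopy class of almost contact structures. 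Consequently any contact $\xi$ on $\mathbb{S}^5$ is almost-contact homotopic to $\xi_{std}$, and the first part applies with $n=3$.

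The main statement is essentially a direct consequence of Proposition~\ref{cor:h-principle_for_embedding_in_spheres} once the equatorial contact embedding is identified, so there is no serious obstacle in that step; the only real content beyond invoking the earlier results is the uniqueness of the almost contact homotopy class on $\mathbb{S}^5$, which is routine obstruction theory given the vanishing of the relevant cohomology.
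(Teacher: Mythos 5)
Your proposal is correct and follows essentially the same route as the paper: the first assertion is obtained by feeding the equatorial contact embedding (with induced structure $\xi_{std}$) into Proposition~\ref{cor:h-principle_for_embedding_in_spheres}, and the $\mathbb{S}^5$ case reduces to the uniqueness of the almost-contact homotopy class on $\mathbb{S}^5$, which the paper cites from Geiges and Hamilton and you justify by the same obstruction-theoretic reasoning. Your explicit verification of the equatorial embedding and its trivial normal bundle is a harmless elaboration of what the paper leaves implicit.
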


\begin{proof}
 The first part  of the proposition is an immediate 
 consequence of the Proposition~\ref{cor:h-principle_for_embedding_in_spheres}.
 In order to establish the second part, recall that there exists a unique almost-contact class
 on $\S^5.$ This was established in \cite{Ge1}. See also \cite{Ha}. But this implies $\xi_{stot}$ is homotopic as 
 an almost-contact
 plane field to $\xi.$ Hence the theorem.  
\end{proof}

Next, we show that any contact structure on $\S^2 \times \S^3$ with trivial first Chern class  iso-contact
embeds in $(\S^7, \xi_{std}).$ More precisely, we establish:

\begin{lemma}\label{lem:embedding_S^2_times_S^3}
 Let $\xi$ be a co-orientable contact structure on $\S^2 \times \S^3.$ The contact manifold
 $(\S^2 \times \S^3, \xi)$ iso-contact embeds in $(\S^7, \xi_{std})$ if and only if the first
 Chern class $c_1(\xi)$ of the contact structure is zero. 
\end{lemma}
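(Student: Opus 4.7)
The plan is to apply Proposition~\ref{cor:h-principle_for_embedding_in_spheres}: it suffices to establish that $c_1(\xi) = 0$ is necessary and to exhibit at least one contact embedding of $\S^2 \times \S^3$ into $(\S^7, \xi_{std})$ whose induced contact structure lies in the same almost-contact homotopy class as $\xi$.

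For necessity, any closed oriented codimension-two submanifold of $\S^7$ separates it and hence bounds, so its normal complex line bundle $\nu$ admits a nowhere vanishing section; thus $c_1(\nu) = 0$. An iso-contact embedding provides a conformal symplectic splitting $\xi_{std}|_{\S^2 \times \S^3} \cong \xi \oplus \nu$, and since $H^2(\S^7;\Z) = 0$ forces the restriction $c_1(\xi_{std})|_{\S^2 \times \S^3}$ to vanish, I conclude $c_1(\xi) = -c_1(\nu) = 0$.

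For sufficiency, I would build a concrete contact embedding via the Segre quadric. Let $Q = \{[z_1{:}z_2{:}z_3{:}z_4] \in \CP^3 : z_1 z_4 = z_2 z_3\}$, the image of the Segre embedding $\CP^1 \times \CP^1 \hookrightarrow \CP^3$, and set $M = \pi^{-1}(Q) \subset \S^7$, where $\pi \colon \S^7 \to \CP^3$ is the Hopf fibration. Since $Q$ is a complex submanifold, the horizontal lift of $TQ$ via the Hopf connection coincides with $\xi_{std} \cap TM$, and the pullback of the Fubini--Study form is symplectic on this distribution; hence $M$ is a contact submanifold of $(\S^7, \xi_{std})$ with some induced contact structure $\eta$. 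The projection $M \to \S^2 \times \S^2$ is an $\S^1$-bundle with Euler class $h_1 + h_2$ (the Segre restriction of the hyperplane class of $\CP^3$). A Gysin sequence calculation gives $H^*(M;\Z) \cong H^*(\S^2 \times \S^3;\Z)$; the homotopy long exact sequence of the fibration yields $\pi_1(M) = 0$; and since $TM \cong \pi^*T(\S^2 \times \S^2) \oplus \R$ with $w_2(T(\S^2 \times \S^2)) = 0$, one reads off $w_2(M) = 0$. The Smale--Barden classification of simply connected $5$-manifolds then forces $M \cong \S^2 \times \S^3$. Moreover, $c_1(\eta) = \pi^* c_1(T(\S^2 \times \S^2)) = 2\pi^*(h_1 + h_2) = 0$ in $H^2(M;\Z)$, because $h_1 + h_2$ generates the kernel of $\pi^*$.

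Given any contact structure $\xi$ on $\S^2 \times \S^3$ with $c_1(\xi) = 0$, it remains to check that $\xi$ and $\eta$ are homotopic as almost-contact structures. I would invoke the classification of almost-contact structures on simply connected spin $5$-manifolds with torsion-free $H^2$: such structures are determined up to homotopy by their first Chern class. Since $c_1(\xi) = 0 = c_1(\eta)$, this yields the desired homotopy, and Proposition~\ref{cor:h-principle_for_embedding_in_spheres} delivers the iso-contact embedding of $(\S^2 \times \S^3, \xi)$ in $(\S^7, \xi_{std})$. The main obstacle I anticipate is the combined topological identification $M \cong \S^2 \times \S^3$ together with the appeal to the homotopy classification of almost-contact structures, both of which must be imported as external inputs; everything else is either standard Kähler/Hopf geometry or a direct application of the proposition.
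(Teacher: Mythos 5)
Your proposal is correct in outline and in its key computations, but it reaches the crucial contact embedding of $\S^2 \times \S^3$ by a genuinely different route than the paper. The paper views $\S^2\times\S^3$ as the contact abstract open book $\mathcal{A}ob(\mathcal{D}T^*\S^2, id)$ and embeds it, as an iso-contact open book embedding, into the trivial open book $\mathcal{A}ob(\mathbb{D}^6, id) \cong (\S^7,\xi_{std})$ using a symplectic embedding of the page $\mathcal{D}T^*\S^2$ into the standard symplectic $6$--disk; you instead take the preimage under the Hopf map of the Segre quadric $Q\cong \CP^1\times\CP^1\subset\CP^3$, identify it with $\S^2\times\S^3$ via the Gysin sequence, simple connectivity, $w_2=0$ and Smale--Barden, and compute $c_1(\eta)=\pi^*c_1(TQ)=2\pi^*(h_1+h_2)=0$ since $h_1+h_2$ is the Euler class of the circle bundle. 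Both routes then feed the same inputs into Proposition~\ref{cor:h-principle_for_embedding_in_spheres}: the classification of almost-contact structures on $\S^2\times\S^3$ by $c_1$ (Geiges, Hamilton) and Kasuya's necessity of $c_1=0$. Your construction is more concrete and makes $c_1(\eta)=0$ a direct Chern-class computation, at the cost of importing the Smale--Barden identification of the total space; the paper's open-book argument avoids any classification of the embedded manifold (the diffeomorphism type is built in) but is less explicit about the induced structure, relying on Kasuya's obstruction to know its Chern class vanishes. One small caveat on your necessity argument: a codimension-two submanifold of $\S^7$ does not separate it; the correct (and standard) reason the normal bundle is trivial is that $[M]=0$ in $H_5(\S^7)$ forces the Euler class of the normal bundle to vanish (this is exactly the point recorded in the paper's proof of Proposition~\ref{cor:h-principle_for_embedding_in_spheres}, and necessity itself is Kasuya's theorem, which the paper simply cites), so this slip does not affect the conclusion.
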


\begin{proof}
 Recall that in \cite{Ge1, Ha} it is  established that two almost-contact plane fields $\xi_1$ and $\xi_2$ are
 homotopic as almost-contact structures if and only if their first Chern classes coincide. 
 
 Next, Kasuya in \cite{Ka} showed that a necessary condition
 for a contact manifold $(M^{2n+1}, \xi)$ to admit an iso-contact embedding in $(\S^{2n+3}, \xi_{std})$ is
 that $c_1(\xi) = 0.$
 
 Hence, from the Corollary~\ref{cor:h-principle_for_embedding_in_spheres} we can see that  if there exist a contact embedding
 of $\S^2 \times \S^3$ in $(\S^7, \xi_{std})$, then the lemma follows. So, we now show that there
 is a contact  embedding of $\S^2 \times \S^3$ in $(\S^7, \xi_{std})$. 
 
 Notice that the contact abstract open book $\mathcal{A}ob(T^*\S^2, id)$ is contact manifold
 diffeomorphic to $\S^2 \times \S^3.$ Clearly, $\mathcal{A}ob(T^*\S^2, id)$ iso-contact open book
 embeds in the contact abstract open book $\mathcal{A}ob(D^6, id)$ as there is
 a  symplectic embedding of $\mathcal{D}T^*\S^2$ in $\S^5.$ Since contact 
 abstract open book $\mathcal{A}ob(D^6,id)$ is contactomorphic to $(\S^7, \xi_{std}),$ the 
 theorem follows.
 \end{proof}

 It was established by H. Geiges in \cite[Chapter--8]{Ge} that a necessary condition to produce a contact 
structure on any five manifold is that the third integral Steifel-Whitney class $W_3$ is zero.
D. Barden in \cite{Ba} had given a complete classification of simply connected $5$--manifolds.
Using this classification,  it is easy to list all the simply connected prime $5$-manifolds with vanishing 
$W_3.$ We now proceed to describe this list. First of all, recall that for each $2 \leq k < \infty,$ there exists a 
unique prime simply connected manifold $M_k$ characterized by the property that 
$H_2(M_k, \mathbb{Z}) = \mathbb{Z}_k \oplus \mathbb{Z}_k.$ Next, recall that there exists a unique non-trivial 
orientable real rank $4$ vector-bundle over $\S^2.$ By  $\S^2 \widetilde{\times} \S^3,$ we denote the
unit sphere bundle associated to this vector-bundle.

We are now in a position to state Barden's theorem that we will need to establish the 
Theorem~\ref{thm:simply_connected_5-folds}.

\begin{theorem}[Barden]\label{thm:barden}
Every closed simply connected almost contact $5$--manifold can be uniquely decomposed into a connected sum of 
prime manifolds $M_k, 2 \leq k < \infty$, $\S^2 \times \S^3$ and  $\S^2 \widetilde{\times} \S^3.$ 
Furthermore, the decomposition has no  copy of $\S^2 \widetilde{\times} \S^3$ provided
the second Steifel-Whitney class is zero.
\end{theorem}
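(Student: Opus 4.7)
The statement is Barden's classification theorem restricted to the almost-contact category, and the plan is to invoke his 1965 argument and then intersect with the constraint $W_3(M) = 0$ (which in dimension five is equivalent to the existence of an almost contact structure). First I would recall that for a closed simply connected smooth $5$-manifold $M$, Poincar\'e duality and universal coefficients reduce the homotopy-invariant information essentially to $H_2(M;\Z)$ together with the class $w_2(M) \in H^2(M;\Z/2) \cong \Hom(H_2(M;\Z),\Z/2)$. Barden encodes $w_2$ via a single integer invariant $i(M) \in \{0,1,2,\dots,\infty\}$ measuring the divisibility of the obstruction, and his classification asserts that the pair $(H_2(M;\Z), i(M))$ is a complete diffeomorphism invariant.

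Next I would exhibit each prime building block and compute its invariants. The model $M_k$ is constructed by an explicit handle decomposition yielding $H_2(M_k;\Z) \cong \Z/k \oplus \Z/k$; the manifolds $\S^2 \times \S^3$ and $\S^2 \widetilde{\times} \S^3$ are the two oriented $\S^3$-bundles over $\S^2$, distinguished by whether $w_2$ is zero or nonzero. One then checks that the remaining Barden primes (those realizing ``half'' torsion, and the exotic family $X_j$, $X_\infty$) have $W_3 \neq 0$, so they fall outside our category. Existence of the decomposition follows by realizing any admissible pair $(H,i)$ as a connected sum: write $H = \Z^a \oplus \bigoplus_j (\Z/k_j)^2$ and take the connected sum of $a$ copies of $\S^2 \times \S^3$ or $\S^2 \widetilde{\times} \S^3$ (the twisted bundle used at most once, to turn on $w_2$) together with one $M_{k_j}$ for each torsion block.

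For uniqueness I would argue additively. Under connected sum $H_2$ splits as a direct sum, $w_2$ restricts to each summand, and Barden's integer $i$ is read off from these data. Since $(\Z/k)^2$ is indecomposable as a Barden summand, the multiset $\{k_j\}$ is forced; and since $\S^2 \times \S^3$ and $\S^2 \widetilde{\times} \S^3$ are the only rank-one primes in our list, the full list of factors is determined by $(H_2(M), w_2(M))$. The final sentence is then immediate: only $\S^2 \widetilde{\times} \S^3$ contributes a nonzero $w_2$, and that contribution lives on its own $H_2$ summand and cannot be cancelled by any other factor; hence if $w_2(M) = 0$ no copy of $\S^2 \widetilde{\times} \S^3$ appears.

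The main obstacle is the uniqueness portion of Barden's theorem itself, whose original proof is a delicate handle-theoretic argument using the $h$-cobordism theorem, smooth Schoenflies in dimension $4$, and a careful study of the automorphism group of the invariant $(H_2, i)$. In practice the honest plan is to cite Barden's theorem in full strength, and let the new work be only the identification of which of his primes admit almost contact structures together with the $w_2$-refinement described above.
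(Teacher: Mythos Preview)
The paper does not prove this theorem at all: it is stated with attribution to Barden and cited from \cite{Ba}, with no argument supplied. Your final paragraph --- that the honest plan is simply to invoke Barden's classification in full strength and then read off which primes survive the almost-contact constraint --- is exactly what the paper does (with a pointer also to \cite[Theorem~8.2.9]{Ge} for the $w_2$ refinement). So your proposal is not wrong, but the sketch of existence and uniqueness you give in the first three paragraphs goes well beyond what the paper attempts; here the theorem is treated purely as a black box.
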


\begin{proof}[ Proof of the Theorem~\ref{thm:simply_connected_5-folds}]\mbox{}

Notice that it is sufficient to establish that any $(M, \xi)$ satisfying 
the hypothesis with $c_1(\xi) = 0$ admits an iso-contact embedding in 
$(\S^7,\xi_{std}).$

Let $M$ be a closed simply connected $5$--manifold  with $w_2(M) = 0$ and let $\xi$ be a contact structure on it with $c_1(\xi) = 0.$ In order to establish the
Theorem~\ref{thm:simply_connected_5-folds}, we first show that $M$ admits a contact embedding in $(\S^7, \xi_{std})$ such that the induced contact structure has its first Chern class $0.$

Notice that if $M$ is as in the hypothesis, then it follows from Theorem~\ref{thm:barden} of Barden stated above that in its connected sum 
decomposition there is no $\S^2 \widetilde{\times} \S^3$ factor. See also, \cite[Theorem:8.2.9]{Ge} for a proof of this.  
Next, we have already observed that if $M = N_1 \# N_2 \# \cdots \# N_l$ and   each $N_i$ contact embeds
in $(\S^7,\xi_{std})$, then there exist a contact embedding of $M$ in $(\S^7, \xi_{std}).$ 

We have already shown in the Lemma~\ref{lem:embedding_S^2_times_S^3} that $\S^2 \times \S^3$ contact embeds in $(\S^7, \xi_{std}).$ Hence, in order to show that $M$ contact embeds in $(\S^7, \xi_{std}),$
we just need to show that each prime manifold $M_k$ described in the Theorem~\ref{thm:barden} above
must contact embed in $(\S^7, \xi_{std}).$ It is well known  that each $M_k$ is a Briskorn $5$--sphere. 
Hence, they admit  contact embedding in $(\S^7, \xi_{std}).$ See, for example, \cite[Rmk:4.2]{Ko}.

Thus, we have shown that every simply connected $5$--manifold satisfying the
hypothesis admits a contact embedding in $(\S^7, \xi_{std}).$  Next,
recall that if a $5$--manifold admits a formal contact embedding in $(\S^7,\xi_{std}),$ then it was shown in \cite{Ka} that the first Chern class of the induced contact structure has to be trivial.

Finally, observe that it was established in \cite[Chpt--8]{Ge,Ge1} ( see also
\cite[chpt-VII]{Ha} for a precise formulation) that any two contact structures on a closed  simply connected $5$--manifold having first Chern classes trivial  are homotopic as almost-contact structures. It  now follows from the  Proposition~\ref{cor:h-principle_for_embedding_in_spheres} that
$(M, \xi)$ admits an iso-contact embedding in $(\S^7, \xi_{std}).$

\end{proof}

%
%
%
%
%
%
%
%
%
%
%
%
%

\end{document}